\newtheorem{theorem}{Theorem}[section]
\newtheorem{lemma}[theorem]{Lemma}
\newtheorem{corollary}[theorem]{Corollary}
\newenvironment{proof}{\begin{trivlist}
   \item[\hskip\labelsep{\bf Proof.}]}{$\hfill\Box$\end{trivlist}}
\theoremstyle{plain} \theorembodyfont{\rmfamily}
\newtheorem{remark}[theorem]{Remark}}
\newtheorem{proposition}[theorem]{Proposition}
\numberwithin{equation}{section}
\newcommand{\mask}[1]{{}}
\DeclareMathOperator{\diam}{diam}
\newcommand{\vq}{\vec{q}}
\newcommand{\vx}{\vec{x}}
\newcommand{\vv}{\vec{v}}
\newcommand{\vz}{\vec{z}}
\newcommand{\vy}{\vec{y}}
\newcommand{\cV}{\mathcal{V}}
\newcommand{\cW}{\mathcal{W}}
\newcommand{\rdiv}{\mathrm{div}\,}
\newcommand{\cT}{\mathcal{T}}
\newcommand{\cE}{\mathcal{E}}
\newcommand{\be}{\begin{equation}}
\newcommand{\ee}{\end{equation}}
\newcommand{\amax}{a_{\mathrm{max}}}
\newcommand{\amin}{a_{\mathrm{min}}}
\newcommand{\Hdiv}{H(\rdiv,D)}
\newcommand{\oD}{\overline{D}}
\newcommand{\Div}{\textnormal{div}}
\newcommand{\dx}{\, {\rm d} \vx}
\title{Mixed Finite Element Analysis of Lognormal Diffusion 
and Multilevel Monte Carlo Methods}
\author{
I. G. Graham\footnotemark[1], \
R. Scheichl\footnotemark[1], \ and \ 
E. Ullmann\footnotemark[1]}
\date{\today}
\begin{document}
\maketitle
\renewcommand{\thefootnote}{\fnsymbol{footnote}}
\footnotetext[1]{Department of Mathematical Sciences, University of Bath, Claverton Down, Bath BA2 7AY, UK. Email: {\tt
I.G.Graham@bath.ac.uk, R.Scheichl@bath.ac.uk, E.Ullmann@bath.ac.uk}.}


 
\maketitle

\begin{abstract}
This work is motivated by the need to develop efficient tools for uncertainty quantification in subsurface flows
associated with radioactive waste disposal studies.
We consider single phase flow problems in random porous media described by correlated lognormal distributions.
We are interested in the error introduced by a finite element discretisation of these problems.
In contrast to several recent works on the analysis of standard nodal finite element discretisations, we consider
here mass-conservative lowest order Raviart-Thomas mixed finite elements. 
This is very important since local mass conservation is highly desirable in realistic
groundwater flow problems. 
Due to the limited spatial regularity and the lack of
uniform ellipticity and boundedness of the operator the analysis is non-trivial in the presence of lognormal random fields.
We establish finite element error bounds for Darcy velocity and pressure,
as well as for a more accurate recovered pressure approximation.
We then apply the error bounds to prove convergence of the multilevel Monte Carlo algorithm for estimating
statistics of these quantities.
Moreover, we prove convergence for a class of bounded, linear functionals of the Darcy velocity.
An important special case is the approximation of the effective permeability in a 2D flow cell.
We perform numerical experiments to confirm the convergence results.

\end{abstract}
\noindent{\bf Keywords:} \ random porous media, fluid flow, lognormal random fields, mixed finite elements, multilevel
Monte Carlo\\

\noindent{\bf Mathematics Subject Classification:} \ 65N15,\, 65N30,\, 65C05,\, 60H35,\, 35R60 

\section{Introduction}

Interest in the analysis, discretisation and postprocessing of partial differential equations (PDEs) with random
coefficients has risen sharply over the past decade. 
These equations are used in computer simulations of physical processes with uncertain inputs in science, engineering and industry.
The goal is to obtain quantitative estimates of the effect of input data uncertainties in order to reliably evaluate
simulation results. 
Typical output quantities of interest are the expected value and higher moments of the solution or the probability of certain events which can be expressed as integrals over the input sample space.

In this paper we are concerned with the quantification of uncertainties in the simulation of subsurface flows.
This plays an important role, for instance, in the safety assessment of proposed long-term radioactive waste
repositories. 
To this end, we consider single phase fluid flow in a porous medium with random, correlated permeability. 
The flow is governed by Darcy's law with random permeability and conservation of mass.
For simplicity, the pressure/flow boundary data is assumed deterministic.
Mathematically, this problem can be formulated as an elliptic PDE with a random diffusion coefficient.
A standard discretisation for these equations with respect to the physical variables is by finite elements (FEs) where the solution to the PDE is approximated on meshes constructed in the spatial domain of interest.
The well-posedness and subsequent finite element error analysis of elliptic PDEs with sufficiently regular, uniformly positive and bounded random coefficients has been established over the past decade and is classical \cite{BNT:2007,BTZ:2004,BarthSchwabZollinger:2011,FST:2005,MatthiesKeese:2005}.

Unfortunately, these results cannot be applied to practical
subsurface flow situations where the permeability is modeled as a correlated, lognormal random field.
Notably, this model ensures positivity of the permeability field and also accounts for the large variations of the
permeability observed in real world data sets. 
However, lognormal diffusion coefficients are not bounded uniformly over all realisations and a non-standard approach
has to be taken to prove existence, uniqueness, and regularity results for the PDE solution.
Moreover, depending on the covariance function the trajectories of such permeability fields are often only
H\"{o}lder continuous with exponent $t \leq 1$ and thus the regularity of the solution is limited.
The analysis of the primal formulation of the lognormal flow problem can be found in
\cite{Charrier:2012,CST:2011,GalvisSarkis:2009,Gittelson:2010,MuglerStarkloff:2011,TSGU:2012}.

One important quantity of interest in radioactive waste disposal is the time it takes radionuclides in case of an accidental damage of the waste repository to travel from the (center of the) repository to the boundary of a well-defined safety zone.
To be useful for realistic groundwater flow problems, it is desirable to use locally mass-conservative
discretisation schemes. 
Discretisation schemes that do not have this property, such as standard Lagrange finite
elements, lead to unphysical approximations of the Darcy velocity and of particle trajectories, even for very simple
models of the particle transport. 
In the framework of finite element discretisations local mass-conservation can be
achieved by mixed finite element methods \cite{BBF:2013}. 

Thus the goal of this paper is to extend the finite element error analysis in \cite{CST:2011,TSGU:2012} to lowest order
Raviart-Thomas mixed elements \cite{RaviartThomas:1977}.
In particular, we consider linear functionals of the (Darcy) velocity which enables us to analyse, e.g., the FE error
for the effective permeability in a simple 2D flow cell.
In addition, we study the properties of a piecewise linear, discontinuous recovered pressure approximation
first introduced in \cite{ArnoldBrezzi:1985}.
This enjoys a faster convergence when the lognormal field has trajectories
with H\"{o}lder exponent $t > 1/2$.
For less smooth fields the standard piecewise constant pressure approximation converges with the same rate. 
This latter result is not surprising, but we could not find it in the classical literature. 
It relies on a duality argument similar to the ones used in \cite{DouglasRoberts:1985,FalkOsborn:1980}.

Mixed formulations of elliptic PDEs with lognormal coefficients are covered by the analysis (for the Brinkman problem)
in \cite{Gittelson_etal:2011}. However, in that work stabilised FEs are
used and the analysis is carried out under strong regularity assumptions on the
solution. 
Here, we make no such a priori assumption and instead deduce the regularity of the
solution from the regularity of the diffusion coefficient.
Moreover, we use lowest order Raviart-Thomas mixed finite elements together with
piecewise constant pressure elements which do not require stabilisation.
Mixed formulations of elliptic PDEs with random coefficients are also studied in \cite{Bespalov_etal:2012}, where an
error analysis in the framework of stochastic Galerkin discretisations is carried out. 
However, there the random coefficient is assumed uniformly bounded which is not the case for lognormal
coefficients.

The stochastic discretisation of PDEs with random coefficients often relies on Karhunen-Lo\`{e}ve expansions of the
random inputs \cite{GhanemSpanos:1991}. 
This is not the case in Monte Carlo type methods.
Karhunen-Lo\`{e}ve expansions can be used to transform a random PDE into a parametric one. However, the typically short
correlation length in subsurface flow problems results in very high-dimensional stochastic parameter spaces. 
Integration over this parameter space, the core task in uncertainty quantification, by spectral methods
\cite{BNT:2007,GhanemSpanos:1991,KnioLeMaitre:2010,Xiu:2010}, 
such as stochastic Galerkin or stochastic collocation methods, is at 
least up-to-now restricted to a few tens or hundreds of parameters.
To date, algorithms for really high-dimensional integration instead use Monte
Carlo (MC) based approaches since, crucially, the convergence rates of
 these methods are dimension independent.

In subsurface flow situations as described above we have rough, highly
variable, large contrast permeabilities. 
The standard MC estimator is in general very expensive in this context
since it is necessary to generate large numbers of samples and to
solve on very fine spatial meshes to obtain acceptable accuracies.
The Multilevel Monte Carlo (MLMC) method overcomes this difficulty by computing approximations to output statistics on a
hierarchy of meshes corresponding to the spatial discretisations and not only on a single mesh. 
MLMC was introduced by Heinrich \cite{Heinrich:2001} for the approximation of parameter-dependent integrals in high-dimensions. 
More recently it has been applied by Giles \cite{Giles:2008} in the context of stochastic differential equations and has
since been used for the approximation of output statistics of PDEs with random coefficients, see
\cite{BarthSchwabZollinger:2011,CGST:2011,Gittelson_etal:2011,Graubner:2008,MishraSchwabSukys:2012}. 

Another option to analyse the MLMC convergence is by interpreting this method as a sparse tensor product approximation \cite{Harbrecht_etal:2011}. 
However, we follow the works \cite{CGST:2011,TSGU:2012} where the convergence and complexity of MLMC is analysed in terms of the root mean square error of the MLMC estimator. 
This approach is based on assumptions on the decay of the expected value of the FE error and of the variance of the
difference of FE approximations on two consecutive grids, see \cite[Theorem 1]{CGST:2011}. 
These assumptions have been proved for the model elliptic PDE with
lognormal diffusion coefficients and standard Lagrange-type finite
elements in \cite{CST:2011} (see also \cite{aretha_thesis,TSGU:2012}
for further extensions).
Here, we extend the error analysis and thus also the convergence analysis for MLMC to mass-conservative, mixed finite
element schemes which are more suitable for subsurface flow applications. 
The results also form a crucial ingredient in the analysis of Quasi Monte Carlo (QMC) and multilevel QMC methods
for lognormal diffusion problems in mixed form \cite{gknsss:2012,Graham_etal:2011}.

The rest of this paper is organized as follows. 
In Section~\ref{sec:problem} we present the lognormal diffusion model problem in mixed form and its discretisation by lowest order Raviart-Thomas elements for the Darcy velocity and piecewise constant elements for the hydrostatic pressure. 
The regularity of the solution to the mixed formulation is a basic ingredient for the FE error analysis and is studied in Section~\ref{sec:reg}. 
Mixed FE error estimates for the Darcy velocity and for the pressure are derived in Section~\ref{sec:error}. 
Here, we also study two post-processing scenarios for Raviart-Thomas mixed methods of practial interest. 
We derive error estimates for a class of linear, bounded velocity functionals and
for a piecewise linear pressure recovery process first proposed in \cite{ArnoldBrezzi:1985}. 
In Section~\ref{sec:MLMC} we apply the error estimates in the complexity analysis of the multilevel Monte Carlo method
applied to our model problem.
Finally, in Section~\ref{sec:numerics} we confirm the theoretical results by numerical experiments. 

\paragraph{Notation} 
In the rest of this paper we will use ``$\lesssim \dots$'' to denote ``$\leq C \dots$'' where the generic constant $C>0$
is independent of the characteristic finite element mesh size, the approximated function and the random diffusion
coefficient. 
Constants which depend on a specific realisation of the random coefficient will be stated explicitly. 
This is necessary since our analysis follows closely the idea in \cite{CST:2011, TSGU:2012} where the finite element
error analysis is first performed for a fixed realisation.
Error estimates are then extended to the entire sample space using H\"{o}lder's inequality.

The FE error analysis of PDEs with deterministic coefficients is well established to date and standard textbook methods
can be used.
In contrast, the analysis of PDEs with random coefficients is non-trivial and requires considerable care.
Since the input data of the PDE is random it can happen that the constants in the standard error estimates
depend on a specific realisation or data set. 
Mathematically speaking, the constants are random variables. 
If we are interested in uniform error estimates on the entire sample space it is thus necessary to derive explicit expressions for these random variables.

\section{Lognormal diffusion: mixed formulation and regularity }\label{sec:problem}

We study a coupled first-order system of PDEs with random coefficient on a bounded, Lipschitz polygonal/polyhedral
domain $D \subset \mathbb{R}^d$, $d=2,3$, stated in mixed form:
\begin{eqnarray}\label{log-diff}
a^{-1}(\omega,\vx) \vq(\omega,\vx) + \nabla u(\omega,\vx) & = & \vec{g}(\omega,\vx), \label{pde1.1} 
\\
\rdiv \vq(\omega,\vx) & = & f(\omega,\vx) \qquad\;\; \textnormal{in } D  \label{pde1.2}\ . 
\end{eqnarray}

For the sake of a transparent presentation we assume deterministic
Dirichlet boundary conditions $u(\omega,\vx) = u_\Gamma(\vx) $ on the entire
boundary $\partial D$ of $D$. Note, however, that our analysis carries
through without much further work also to Neumann and mixed boundary
conditions, as well as to random boundary data. The test problem in Section~\ref{sec:numerics} will in
fact have a set of mixed boundary conditions.
Now, given a probability space $(\Omega,\mathcal{A},P)$ with sample space $\Omega$ and a probability measure $P$, we
require that $\eqref{pde1.1}-\eqref{pde1.2}$ as well as the boundary conditions are satisfied for all samples $\omega
\in \Omega$ $P$-almost surely ($P$-a.s.). 
Note that the $\sigma$-algebra $\mathcal{A}$ associated with $\Omega$ is generated by the collection of random
variables $\{a(\cdot,\vx) \colon \vx \in D\}$.

In the context of steady-state flow in a porous medium, $\vq$ is the Darcy velocity, $u$ is the hydrostatic pressure and $a$ is the permeability. 
The empirical relation between pressure and velocity \eqref{pde1.1} is known as Darcy's law and \eqref{pde1.2} is the law of conservation of mass. 
In this paper, the coefficient $a(\omega,\vx)$ is assumed to be a \textit{lognormal} random field, 
i.e. $\log a(\omega,\vx)$ is Gaussian with a certain mean $\mu(\vx)$ and covariance function
$C(\vx,\vy):=\mathbb{E}[(\log a(\cdot,\vx)-\mu(\vx))(\log a(\cdot,\vy)-\mu(\vy))]$.

For every sample $\omega\in\Omega$, we define
\[
a_{\min}(\omega) \,:=\,
\min_{\vx\in \overline{D}} \ a(\omega,\vx) \quad \text{and}
\quad a_{\max}(\omega) \,:=\, \max_{\vx\in \overline{D}} \ a(\omega,\vx).
\]
For these to be well-defined and to be able to use the regularity results proved in \cite{TSGU:2012}, we make certain
assumptions on the coefficient $a$, the source terms $\vec{g}$ and $f$ and the boundary data $u_\Gamma$. 
To this end, let $\mathcal{B}$ denote a Banach space with norm $\|\cdot\|_{\mathcal{B}}$.
Let $L^p(\Omega,\mathcal{B})$ denote the space 
of $\mathcal{B}$-valued random variables with finite $p^{\text{th}}$
moment (with respect to the probability measure $P$) of the $\mathcal{B}$-norm. 
For brevity we write $L^p(\Omega,\mathbb{R})=:L^p(\Omega)$. 
The space of H\"{o}lder-continuous functions with exponent $t$ is
denoted by $C^t(\overline{D})$; $H^s(D)$ is the Sobolev space of 
(fractional) order $s \in \mathbb{R}$ (see, e.g., \cite[Chapter 7]{Adams:2003}); 
$H(\rdiv,D)$ is the subspace of functions $\vv \in L^2(D)^d$ where 
$\rdiv \vv \in L^2(D)$ (see, e.g., \cite{BBF:2013}) with 
norm denoted by $\|\cdot\|_{H(\rdiv)}$. 
For fixed $\omega \in \Omega$, to simplify the presentation, we will write $v_\omega$ instead of 
$v(\omega,\cdot)$ for any function $v$ on $\Omega \times D$ and likewise $\vv_\omega$ instead of
$\vv(\omega,\cdot)$. 
With these definitions we can now state our basic assumptions:
\begin{itemize}
\item[{\bf A1.}] 
$a_{\min} > 0$ $P$-a.s. and $1/a_{\min} \in L^p(\Omega)$, for all $p \in [1,\infty)$,
\item[{\bf A2.}]\label{eq:mfeass2}
$a \in L^p(\Omega, C^t(\overline{D}))$, for some $0 < t
\leq 1 $ and all $p \in [1,\infty)$,
\item[{\bf A3.}] $u_\Gamma \in H^{1/2+t}(\partial D)$, 
$\vec{g} \in L^r(\Omega, H^{t}(D)^d)$,
$f \in L^r(\Omega, H^{t}(D))$
for some $0 < t \le 1$ and $r \in [1,\infty)$.\vspace{0.5ex}
\end{itemize}
Note A2 implies that $a_{\max}(\omega)$ is well-defined and $a_{\max}(\omega) \in
L^p(\Omega)$, for all $p \in [1,\infty)$.

Assumptions A1--A2 are satisfied for any lognormal diffusion coefficient $a$ where the underlying
Gaussian random field $\log(a)$ has a Lipschitz continuous, isotropic covariance function and a mean function that
belongs to $C^{t}(\overline{D})$ (cf. \cite[Proposition 2.4]{CST:2011}).
Moreover, ${1}/{a_{\min}}  \in L^p(\Omega)$ for all $p \in [1,\infty)$ is proved in \cite[Proposition
2.3]{Charrier:2012}.
The H\"{o}lder-regularity of the trajectories of $\log(a)$ (and thus of $a$) follows from the smoothness of its
covariance function.
For the exponential covariance function it can only be shown that trajectories $a_\omega \in C^t(\overline{D})$, for all
$t < 1/2 $ \cite[\S~2.3]{CST:2011}.
For Mat\'{e}rn covariances with $\nu \in (1/2,1)$ we have $a_\omega \in
C^t(\overline{D})$ for all $t < \nu $ \cite[\S~2.2]{gknsss:2012}.
For $\nu > 1$, the trajectories $a_\omega \in C^1(\overline{D})$ and thus A2 holds for $t = 1$
\cite[Remark 4]{gknsss:2012}. 

\begin{remark}
The assumption $f \in L^r(\Omega,H^t(D))$ in A3 can be generalised to $f \in L^r(\Omega,H^{\tilde{t}}(D))$ for
some $0\leq \tilde{t}\leq 1$ with $\tilde{t}\neq t$. We choose not to do this to simplify the
presentation.
\end{remark}

\subsection{Weak formulation}
To further analyse and to eventually discretise \eqref{pde1.1}-\eqref{pde1.2}, 
we put it in weak form (see, e.g. \cite{BBF:2013} for
details). 
Let $\omega \in \Omega$ be fixed and set $\cV \ = H(\mathrm{div},D)$ and $\cW =L^2(D)$.
Introducing, for all $\vec{\eta}, \vec{v} \in \cV$ and $w \in \cW$, the  bilinear
forms
\[
m_\omega(\vec{\eta},\vec{v}) :=
(a_\omega^{-1} \vec{\eta}, \vec{v})_{L^2(D)},
\qquad b(\vec{v},w) := -(\mathrm{div} \, \vec{v}, w)_{L^2(D)}, \
\]
and the linear functionals
\[
G_\omega(\vv) := \big(\vec{g}_\omega,\vv\big)_{L^2(D)} \, - \, \int_{\partial D} u_\Gamma \, \vv \cdot \vec{\nu} \, ds,
\qquad F_\omega(w) := -\big(f_\omega,w\big)_{L^2(D)}\ ,
\]
the weak form of
(\ref{pde1.1})-(\ref{pde1.2}) is to find
$(\vec{q}_\omega,u_\omega) \in \cV \times \cW $
such that $P$-a.s.
\begin{equation}
\left.
\begin{array}{rclll}
\hspace*{1.5cm} m_\omega(\vec{q}_\omega,\vec{v}) & + \ \ b(\vec{v},u_\omega)
& = & G_\omega(\vec{v}) &\mathrm{for\ all\ } \vec{v} \in \cV, \\[1ex]
b(\vec{q}_\omega,w)&  & = & F_\omega(w) &\mathrm{for\ all\ }
w \in \cW.
\end{array}
\right\}
\label{3.3}
\end{equation}

Existence and uniqueness results for problem \eqref{3.3} for fixed $\omega \in \Omega$ are classical (see,
e.g., \cite[Theorem 4.2.3]{BBF:2013}).
They rely on certain continuity, coercivity and inf-sup stability conditions being satisfied. 
In particular, we need all the bilinear forms and linear functionals to be bounded. 
The following bounds follow immediately from the Cauchy-Schwarz inequality
and a trace result:
\begin{equation}
\label{bounds}
\begin{split}
&\|m_\omega\|_{\mathcal{L}(\cV,\cV')} \le
\frac{1}{a_{\min}(\omega)}, \ \ \ \ \ \|b\|_{\mathcal{L}(\cV,\cW')} \le 1, \ \ \\
&\|F_\omega\|_{\cW'} \le \|f_\omega\|_{L^2(D)}\ \ \text{and} \ \ \ 
\|G_\omega\|_{\cV'} \le \|\vec{g}_\omega\|_{L^2(D)}+\|u_\Gamma\|_{H^{1/2}(\partial D)}\,.
\end{split}
\end{equation}
The inf-sup stability of $b$ is also classical, i.e.
\begin{equation}\label{infsup}
\sup_{\vv \in \cV} \frac{b(\vv , w )}{\Vert \vv \Vert_{H(\rdiv)}} \  \geq k_0 
\Vert w \Vert_{L^2(D)}, \quad \text{for all } \ w \in \cW,
\end{equation}
with a constant $k_0>0$ that is independent of $\omega$ and only depends on 
the shape of the domain $D$.
Finally, to establish the coercivity of $m_\omega$ let us 
introduce  
\begin{equation}
Z \ =  \ \{ \vv \in \cV : b(\vv , w ) = 0 \quad \text{for
  all} \quad w \in \cW\}\ .\notag
\end{equation}
This subspace of $\cV$ is called $\ker B$ in \cite{BBF:2013}.
The bilinear form $m_\omega(\cdot , \cdot)$ is coercive on $Z$, i.e.
\begin{equation}\label{coercive}
m_\omega(\vv , \vv ) \ \geq \ \frac{1}{a_{\max}(\omega)} \|\vv\|_{H(\mathrm{div})}^2, \qquad \text{for all } \ \vv \in Z.
\end{equation}

\begin{proposition}
\label{prop2.2}
Under the Assumptions A1--A3, the family of problems \eqref{3.3} has a 
unique solution $(\vq,u)$ with $\vq\in L^p(\Omega,\cV )$ and $u \in L^p(\Omega,\cW )$, for all \ $1 \le p < r$.
\end{proposition}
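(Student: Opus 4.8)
The plan is to solve the family \eqref{3.3} pathwise --- i.e.\ separately for each fixed $\omega \in \Omega$ --- by the classical Brezzi theory for saddle-point problems, to extract from this an a priori bound in which the dependence of the solution norms on $a_{\min}(\omega)$ and $a_{\max}(\omega)$ is made explicit, and then to promote this pointwise-in-$\omega$ bound to $L^p(\Omega)$ integrability by means of H\"older's inequality.

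First I would fix $\omega$ and verify that all the hypotheses of \cite[Theorem 4.2.3]{BBF:2013} hold for \eqref{3.3}: the continuity of $m_\omega$ and $b$, with norms $1/a_{\min}(\omega)$ and $1$; the inf-sup condition \eqref{infsup} for $b$ with the $\omega$-independent constant $k_0$; the coercivity \eqref{coercive} of $m_\omega$ on $Z$ with constant $1/a_{\max}(\omega)$; and the boundedness of $G_\omega$ and $F_\omega$ recorded in \eqref{bounds}. Here A1--A3 guarantee $1/a_{\min}(\omega) < \infty$, $a_{\max}(\omega) < \infty$ and $\vec{g}_\omega \in L^2(D)^d$, $f_\omega \in L^2(D)$ (using $H^t(D) \hookrightarrow L^2(D)$). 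This yields a unique solution $(\vq_\omega, u_\omega) \in \cV \times \cW$ of \eqref{3.3} for $P$-a.e.\ $\omega$.

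Next I would track the stability constants through the standard a priori estimates for saddle-point problems --- splitting $\vq_\omega$ into a component in $Z$ and a lifting of the constraint $\rdiv \vq_\omega = f_\omega$ obtained from \eqref{infsup}. Inserting the coercivity constant $1/a_{\max}(\omega)$, the continuity constant $\|m_\omega\| = 1/a_{\min}(\omega)$ and the inf-sup constant $k_0$ produces bounds of the form
\[
\|\vq_\omega\|_{\Hdiv} \;\lesssim\; a_{\max}(\omega)\big(\|\vec{g}_\omega\|_{L^2(D)} + \|u_\Gamma\|_{H^{1/2}(\partial D)}\big) + \Big(1 + \tfrac{a_{\max}(\omega)}{a_{\min}(\omega)}\Big)\|f_\omega\|_{L^2(D)},
\]
together with a similar bound for $\|u_\omega\|_{L^2(D)}$ carrying an extra factor $1/a_{\min}(\omega)$ on the $f$-term, where $\lesssim$ absorbs only $k_0$ (hence only the shape of $D$). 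Thus both solution norms are dominated by a fixed polynomial in $a_{\max}(\omega)$ and $1/a_{\min}(\omega)$ multiplied by the data norms $\|\vec{g}_\omega\|_{L^2(D)}$, $\|f_\omega\|_{L^2(D)}$ and the deterministic constant $\|u_\Gamma\|_{H^{1/2}(\partial D)}$.

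Finally I would conclude by H\"older's inequality. By A1 and the fact that A2 implies $a_{\max} \in L^s(\Omega)$ for every $s < \infty$, both $1/a_{\min}$ and $a_{\max}$ (hence also $a_{\max}/a_{\min}$) lie in $L^s(\Omega)$ for all $s < \infty$, while $\|\vec{g}_\bullet\|_{L^2(D)}, \|f_\bullet\|_{L^2(D)} \in L^r(\Omega)$ by A3; given $1 \le p < r$, choosing $q < \infty$ with $1/p = 1/r + 1/q$ and applying the generalised H\"older inequality to the product of the finitely many $a$-dependent factors (which lies in $L^q(\Omega)$) with the data term (which lies in $L^r(\Omega)$) shows $\|\vq_\bullet\|_{\Hdiv}, \|u_\bullet\|_{L^2(D)} \in L^p(\Omega)$. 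Strong measurability of $\omega \mapsto (\vq_\omega, u_\omega) \in \cV \times \cW$ follows from the Lipschitz dependence of the saddle-point solution on $a_\omega \in C^t(\oD)$ and on $(\vec{g}_\omega, f_\omega)$, composed with measurability of these inputs and the separability of $\cV$ and $\cW$, as in \cite{CST:2011, TSGU:2012}. Hence $\vq \in L^p(\Omega, \cV)$ and $u \in L^p(\Omega, \cW)$ for all $1 \le p < r$. The one step requiring real care is the explicit a priori estimate: the textbook versions of Brezzi's theorem keep the stability constants implicit, and one must follow them through to obtain the correct powers of $a_{\max}(\omega)$ and $1/a_{\min}(\omega)$ --- it is precisely this growth that, via H\"older, forces $p < r$ rather than $p \le r$ in the conclusion. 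Everything else, including the measurability, is routine.
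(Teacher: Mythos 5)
Your proposal is correct and follows essentially the same route as the paper: pathwise existence and uniqueness from the Brezzi conditions \eqref{bounds}--\eqref{coercive} via \cite[Theorem 4.2.3]{BBF:2013}, then the standard stability estimates with the explicit dependence on $a_{\max}(\omega)$ and $1/a_{\min}(\omega)$, promoted to $L^p(\Omega)$ for $p<r$ by Assumptions A1--A3 and H\"older's inequality. The only difference is that you write out the a priori bound and the measurability step explicitly, which the paper leaves implicit.
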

\begin{proof}
For $\omega \in \Omega$ fixed there is a unique solution $(\vq_\omega,u_\omega) \in \cV \times \cW$.
This follows from the continuity conditions \eqref{bounds}, together
with the inf-sup stability \eqref{infsup} of $b$ and the coercivity
\eqref{coercive} of $m_\omega$, since $0 < \frac{1}{a_{\max}(\omega)} \le \frac{1}{a_{\min}(\omega)} < \infty$ for
almost all $\omega \in \Omega$.
That $\vq \in L^p(\Omega,\cV)$ and $u \in L^p(\Omega,\cW)$ follows from standard stability estimates of
$\Vert \vq_\omega \Vert_{\cV}$ and $\Vert u_\omega \Vert_{\cW}$ (see, e.g., \cite[Theorem 4.2.3]{BBF:2013}) together
with \eqref{bounds}--\eqref{coercive}, Assumptions A1--A3, and the H\"{o}lder inequality.
\end{proof}

\subsection{Regularity of the solution}\label{sec:reg}

To prove convergence of finite element approximations to the solution of 
\eqref{3.3} we need to study the regularity of its solution.
Due to the following equivalence between primal and dual formulations of 
\eqref{pde1.1}-\eqref{pde1.2}, we can use the regularity results proved in 
\cite{CST:2011,TSGU:2012}.

\begin{lemma}
\label{equivalence}
Suppose Assumptions A1-A3 hold and 
$(\vq_\omega,u_\omega) \in \cV \times \cW$ is the unique 
solution of \eqref{3.3}, $P$-a.s. for $\omega \in \Omega$. Then 
$u_\omega \in H^1_{u_\Gamma}(D) := \{v \in H^1(D): v=u_\Gamma \text{ on } \partial D\}$ 
and 
 \begin{equation}
\label{primal-weak}
\Big(a_\omega \nabla u_\omega, \nabla \phi\Big)_{L^2(D)} = \Big(\tilde{f}_\omega,\phi\Big)_{L^2(D)} \qquad \text{for all } \ \phi \in 
H^1_0(D)\ ,
\end{equation}
where $\tilde{f}_\omega := f_\omega - \rdiv(a_\omega\vec{g}_\omega)$. 
Moreover, we have
\begin{equation}
\label{eq:H1bound}
\| u_\omega \|_{H^1(D)} \lesssim \frac{1}{a_{\min}(\omega)} \left(\|f_\omega\|_{H^{-1}(D)} + a_{\max}(\omega) \|
\vec{g}_\omega \|_{L^2(D)} \right)
+
|u_\Gamma|_{H^{1/2}(\partial D)}\,.
\end{equation}
\end{lemma}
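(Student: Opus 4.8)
The plan is to establish the equivalence between the mixed solution and the primal weak solution by exploiting the structure of the two equations in \eqref{3.3}. First I would use the second equation $b(\vq_\omega,w) = F_\omega(w)$ for all $w \in \cW = L^2(D)$, which says precisely that $\rdiv \vq_\omega = f_\omega$ in $L^2(D)$. Next, I would test the first equation $m_\omega(\vq_\omega,\vv) + b(\vv,u_\omega) = G_\omega(\vv)$ with $\vv \in \cV = \Hdiv$ and rewrite it as $(a_\omega^{-1}\vq_\omega - \vec{g}_\omega, \vv)_{L^2(D)} = (\rdiv\vv, u_\omega)_{L^2(D)} + \int_{\partial D} u_\Gamma\, \vv\cdot\vec{\nu}\,ds$. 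Restricting first to $\vv \in C_0^\infty(D)^d$, the boundary term vanishes and the right-hand side becomes $(\rdiv\vv, u_\omega)$; this identifies $\nabla u_\omega = -(a_\omega^{-1}\vq_\omega - \vec{g}_\omega) = \vec{g}_\omega - a_\omega^{-1}\vq_\omega$ in the distributional sense, and since the right-hand side is in $L^2(D)^d$ (using $a_\omega^{-1} \le 1/a_{\min}(\omega) < \infty$ and $\vq_\omega \in L^2(D)^d$, together with $\vec{g}_\omega \in L^2(D)^d$), we get $u_\omega \in H^1(D)$. Then allowing general $\vv \in \Hdiv$ and integrating by parts (Green's formula in $\Hdiv$), comparing the boundary terms forces the trace of $u_\omega$ on $\partial D$ to equal $u_\Gamma$, so $u_\omega \in H^1_{u_\Gamma}(D)$.

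With $\vq_\omega = a_\omega(\vec{g}_\omega - \nabla u_\omega)$ in hand, I would substitute into $\rdiv\vq_\omega = f_\omega$ to obtain $-\rdiv(a_\omega\nabla u_\omega) = f_\omega - \rdiv(a_\omega\vec{g}_\omega) = \tilde f_\omega$ in the sense of distributions; testing against $\phi \in H_0^1(D)$ and integrating by parts yields \eqref{primal-weak}. One small point to be careful about here is that $a_\omega\vec{g}_\omega \in L^2(D)^d$ (by A2 and the continuous embedding of $C^t(\oD)$ into $L^\infty$, and A3), so $\rdiv(a_\omega\vec{g}_\omega) \in H^{-1}(D)$ and the pairing $(\tilde f_\omega,\phi)_{L^2(D)}$ in \eqref{primal-weak} should be read as the $H^{-1}$--$H^1_0$ duality pairing; this matches the convention in \cite{CST:2011,TSGU:2012}.

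For the bound \eqref{eq:H1bound}, the standard route is: pick a lifting $E u_\Gamma \in H^1(D)$ of the boundary data with $\|Eu_\Gamma\|_{H^1(D)} \lesssim |u_\Gamma|_{H^{1/2}(\partial D)}$ (plus lower-order terms, absorbed by the generic constant), write $u_\omega = u_\omega^0 + Eu_\Gamma$ with $u_\omega^0 \in H_0^1(D)$, test \eqref{primal-weak} with $\phi = u_\omega^0$, and use the uniform coercivity $(a_\omega\nabla\phi,\nabla\phi) \ge a_{\min}(\omega)\|\nabla\phi\|_{L^2(D)}^2$ on the left and Cauchy--Schwarz together with $\|a_\omega\|_{L^\infty(D)} = a_{\max}(\omega)$ on the right. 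This gives $a_{\min}(\omega)\|\nabla u_\omega^0\|^2 \lesssim \|f_\omega\|_{H^{-1}}\|u_\omega^0\|_{H^1} + a_{\max}(\omega)(\|\vec g_\omega\|_{L^2} + \|\nabla Eu_\Gamma\|_{L^2})\|\nabla u_\omega^0\|$, and after dividing by $\|\nabla u_\omega^0\|$, invoking Poincaré, and adding back $\|Eu_\Gamma\|_{H^1}$, one arrives at \eqref{eq:H1bound}.

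The main obstacle is not any single deep step but rather the careful bookkeeping of the realisation-dependent constants $1/a_{\min}(\omega)$ and $a_{\max}(\omega)$: one must check that every inequality is genuinely pointwise in $\omega$ with the displayed $\omega$-dependence and with all remaining constants independent of $\omega$ and of the data, so that the estimate can later be raised to the $p$-th moment via Hölder's inequality (as in Proposition~\ref{prop2.2}). A secondary technical point is justifying the Green's-formula/integration-by-parts argument on $\Hdiv$ with a merely $H^{1/2+t}$ (in particular $H^{1/2}$) boundary datum, which is handled by the density of $C^\infty(\oD)^d$ in $\Hdiv$ and the standard normal-trace theory for $\Hdiv$ as in \cite{BBF:2013}.
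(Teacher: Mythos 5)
Your argument is essentially the paper's own proof: the same (coordinate-wise) testing of the first equation with compactly supported fields to identify $\nabla u_\omega = \vec{g}_\omega - a_\omega^{-1}\vq_\omega$ and conclude $u_\omega \in H^1(D)$, the same Green's-formula comparison of boundary terms to recover the trace $u_\omega = u_\Gamma$ on $\partial D$, the same substitution into $\rdiv \vq_\omega = f_\omega$ followed by integration by parts to get \eqref{primal-weak}, and the same Lax--Milgram/lifting estimate after bounding $\|\tilde f_\omega\|_{H^{-1}(D)} \le \|f_\omega\|_{H^{-1}(D)} + a_{\max}(\omega)\|\vec{g}_\omega\|_{L^2(D)}$, which the paper simply cites without detail. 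The only (cosmetic) difference is that your explicit lifting computation delivers the boundary contribution with a factor $a_{\max}(\omega)/a_{\min}(\omega)$ rather than the cleaner constant displayed in \eqref{eq:H1bound} --- exactly what the paper's terse ``consequence of Lax--Milgram'' step would also give --- and this is immaterial for every later use of the bound (e.g.\ in Lemma~\ref{th:mfe6}, where $u_\Gamma \equiv 0$ and $\vec{g} \equiv \vec{0}$), since all such ratios have finite moments under A1--A2.
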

\begin{proof}
Let us fix $\omega \in \Omega$.
Choose $\vv \in H(\rdiv,D)$ with $v_i \in C_0^\infty(D)$ and $v_j=0$, for $j \not=i$.
Then the first equation in \eqref{3.3} gives
\[
\int_D u_\omega \frac{\partial v_i}{\partial x_i} \dx = \int_D (a_\omega^{-1} (q_{\omega})_i - (g_{\omega})_i) v_i \dx
\,.
\]
This ensures the existence of weak first-order partial derivatives of $u_\omega$, such that 
\begin{equation}\label{uprime}
\nabla u_\omega = -(a_\omega^{-1} \vq_\omega - \vec{g}_\omega) 
\end{equation}
and $u_\omega \in H^1(D)$. 
Now, using the second equation in \eqref{3.3} and integrating by parts, for any $\phi \in H^1_0(D)$,
\[
\big(f_\omega,\phi\big)_{L^2(D)} 
= \big(\rdiv \vq_\omega,\phi\big)_{L^2(D)}
= -\big(\vq_\omega,\nabla \phi\big)_{L^2(D)} 
= \big(a_\omega \nabla u_\omega,\nabla \phi\big)_{L^2(D)} +
\big(\rdiv(a_\omega \vec{g}_\omega),\phi\big)_{L^2(D)}\, .
\]
Moreover, the first equation in \eqref{3.3} and Green's formula tell us that if 
$\vv \in (C^\infty(D))^d$ then
$$
\int_{\partial D} u_\omega \vv\cdot \vec{\nu} \,ds \, = \, \int_D \nabla
u_\omega \cdot \vv \, + \,  u_\omega \rdiv \vv \dx  \, = \, 
\int_D (\vec{g}_\omega-a_\omega^{-1}\vq_\omega) \cdot \vv 
\, + \, u_\omega \rdiv \vv \dx \, = \,
\int_{\partial D} u_\Gamma \vv\cdot \vec{\nu} \,ds \, .
$$
Hence, $u_\omega=u_\Gamma$ on all the smooth parts of $\partial D$. 
The extension to Lipschitz polygonal boundaries is classical \cite{McLean}.
In summary, 
$u_\omega \in H^1_{u_\Gamma}(D)$ and by \eqref{uprime} it satisfies \eqref{primal-weak}. 
Since
$$
\|\tilde{f}_\omega\|_{H^{-1}(D)} = 
\|f_\omega - \rdiv(a_\omega \vec{g}_\omega)\|_{H^{-1}(D)} \le \| f_\omega\|_{H^{-1}(D)} + 
a_{\max}(\omega) \|\vec{g}_\omega\|_{L^2(D)} \, ,
$$
the bound on the $H^1$-norm of $u_\omega$ is a consequence of the 
Lax-Milgram Lemma. 
\end{proof}
Since the solution of \eqref{3.3} is also the solution of the second-order problem \eqref{primal-weak}, we can use the
regularity results established in 
\cite[Prop.~3.1]{CST:2011} and \cite[Thm.~2.1]{TSGU:2012} 
to deduce the regularity of the solution of \eqref{3.3}. We only state the 
result for $d=2$. Similar results can also be proved for $d=3$
and for coefficients that are only piecewise $C^t(\overline{D})$ 
(see \cite{TSGU:2012} for details).
\begin{theorem}
\label{prop:ChScTe}
Let 
$D \subset \mathbb{R}^2$ be a polygon whose largest interior angle is 
$\theta_{\max} \in (0,2\pi)$ and suppose Assumptions A1-A3 are satisfied 
for some $0 < t < 1$.  Then 
$(\vq_\omega,u_\omega) \in H^s(D)^d \times H^{1+s}(D)$,
for all \ $0<s<\min(t,\frac{\pi}{\theta_{\max}})$, $P$-a.s.~in $\omega \in \Omega$, and the following bounds hold:
\begin{equation}\label{eq:mfe3}
\Vert u_\omega \Vert_{H^{1+s}(D)} \ \lesssim \ C_{\mathrm{reg}}(\omega) 
\quad \text{and} \quad
\Vert \vq_\omega  \Vert_{H^s(D)} \ \lesssim \ \Vert a_\omega \Vert_{C^t(\overline{D})}
\left[C_{\mathrm{reg}}(\omega)+\Vert\vec{g}_\omega\Vert_{H^s(D)}\right]\, ,
\end{equation}
where\vspace{-1.5ex} 
\begin{equation*}
\label{C1}
C_{\mathrm{reg}}(\omega):=
\frac{\amax(\omega) \Vert a_\omega \Vert_{C^t(\overline{D})}^2}
  {\amin^4(\omega)}\left[\Vert f_\omega\Vert_{L^2(D)} + \Vert a_\omega\Vert_{\mathcal{C}^t(\overline{D})} \left(\Vert
\vec{g}_\omega\Vert_{H^{s}(D)} + {\Vert u_\Gamma \Vert_{H^{1/2+s}(\partial D)}}\right) \right]
\end{equation*}
Moreover, $\rdiv \vq_\omega \in H^{s}(D)$ and $\| \rdiv \vq_\omega \|_{H^{s}(D)} = 
\| f_\omega \|_{H^{s}(D)}$ for all \ $0\leq s \leq t$.
\end{theorem}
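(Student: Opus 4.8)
The plan is to reduce everything to the scalar second-order problem \eqref{primal-weak} furnished by Lemma~\ref{equivalence}, to quote the known elliptic regularity theory for $C^t$ coefficients on polygons, and then to transfer the regularity to $\vq_\omega$ through the algebraic identity \eqref{uprime}. First I would rewrite the data term of \eqref{primal-weak} in divergence form: for $\phi\in H^1_0(D)$,
\[
(\tilde f_\omega,\phi)_{L^2(D)} \;=\; (f_\omega,\phi)_{L^2(D)} \;+\; (a_\omega\vec{g}_\omega,\nabla\phi)_{L^2(D)},
\]
so that $u_\omega$ is the $H^1_{u_\Gamma}(D)$-solution of $-\rdiv(a_\omega\nabla u_\omega)=f_\omega-\rdiv(a_\omega\vec{g}_\omega)$, a genuine elliptic problem since, for the fixed $\omega$, $a_\omega$ is bounded between $\amin(\omega)>0$ and $\amax(\omega)<\infty$ by A1--A2.

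The one substantive estimate on which the whole argument hinges is the multiplier bound
\[
\|a_\omega\,\vec v\|_{H^s(D)} \;\lesssim\; \|a_\omega\|_{C^t(\overline D)}\,\|\vec v\|_{H^s(D)},\qquad 0<s<t\le 1,
\]
for $\vec v\in H^s(D)^d$; this is exactly where the strict inequality $s<t$ is used. I would prove it from the Gagliardo double-integral form of the $H^s$-seminorm by splitting $a_\omega(\vx)\vec v(\vx)-a_\omega(\vy)\vec v(\vy)=a_\omega(\vx)(\vec v(\vx)-\vec v(\vy))+(a_\omega(\vx)-a_\omega(\vy))\vec v(\vy)$: the first piece is bounded by $\|a_\omega\|_{L^\infty}^2|\vec v|_{H^s}^2$, and for the second the H\"older estimate $|a_\omega(\vx)-a_\omega(\vy)|\le\|a_\omega\|_{C^t}|\vx-\vy|^t$ replaces the singular kernel $|\vx-\vy|^{-(d+2s)}$ by $|\vx-\vy|^{-(d+2s-2t)}$, which is integrable near the diagonal precisely when $s<t$. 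Applying this with $\vec v=\vec{g}_\omega$ gives $a_\omega\vec{g}_\omega\in H^s(D)^d$ and hence $\tilde f_\omega\in H^{s-1}(D)$ with $\|\tilde f_\omega\|_{H^{s-1}(D)}\lesssim\|f_\omega\|_{L^2(D)}+\|a_\omega\|_{C^t(\overline D)}\|\vec{g}_\omega\|_{H^s(D)}$, using $L^2(D)\hookrightarrow H^{s-1}(D)$ for $s\le1$.

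With $\tilde f_\omega\in H^{s-1}(D)$ and $u_\Gamma\in H^{1/2+s}(\partial D)$ in hand, I would invoke \cite[Prop.~3.1]{CST:2011} and \cite[Thm.~2.1]{TSGU:2012}: for the elliptic problem with $C^t$ coefficient on a polygon of largest interior angle $\theta_{\max}$, the solution lies in $H^{1+s}(D)$ for every $0<s<\min(t,\pi/\theta_{\max})$ --- the second restriction being the usual corner-singularity constraint --- and obeys $\|u_\omega\|_{H^{1+s}(D)}\lesssim C_{\mathrm{reg}}(\omega)$, where the explicit dependence on $\amin(\omega)$, $\amax(\omega)$, $\|a_\omega\|_{C^t(\overline D)}$ and the data is the one recorded in the statement (the $\|f_\omega\|_{L^2(D)}$, $\|\vec{g}_\omega\|_{H^s(D)}$ and $\|u_\Gamma\|_{H^{1/2+s}(\partial D)}$ contributions coming from the bound on $\tilde f_\omega$ and from a Dirichlet lift of $u_\Gamma$ into $H^{1+s}(D)$). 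For the velocity I would then use $\vq_\omega=a_\omega(\vec{g}_\omega-\nabla u_\omega)$ from \eqref{uprime} and the multiplier estimate once more,
\[
\|\vq_\omega\|_{H^s(D)} \;\le\; \|a_\omega\|_{C^t(\overline D)}\big(\|\vec{g}_\omega\|_{H^s(D)}+\|\nabla u_\omega\|_{H^s(D)}\big) \;\lesssim\; \|a_\omega\|_{C^t(\overline D)}\big(C_{\mathrm{reg}}(\omega)+\|\vec{g}_\omega\|_{H^s(D)}\big),
\]
since $\|\nabla u_\omega\|_{H^s(D)}\le\|u_\omega\|_{H^{1+s}(D)}$. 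Finally the divergence claim is immediate: the second equation in \eqref{3.3} is the identity $\rdiv\vq_\omega=f_\omega$ in $L^2(D)$, hence as functions, so $\rdiv\vq_\omega\in H^s(D)$ for all $0\le s\le t$ by A3, with $\|\rdiv\vq_\omega\|_{H^s(D)}=\|f_\omega\|_{H^s(D)}$. I expect the $C^t$-multiplier estimate above to be the only real obstacle; the remaining steps are either bookkeeping or direct appeals to \cite{CST:2011,TSGU:2012}.
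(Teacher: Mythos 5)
Your proposal is correct and follows the paper's proof essentially step for step: reduce to the primal problem via Lemma~\ref{equivalence}, show $\tilde f_\omega\in H^{-1+s}(D)$ using the $C^t$--$H^s$ multiplier bound, invoke \cite[Thm.~2.1]{TSGU:2012} (and \cite[Prop.~3.1]{CST:2011}) for the pressure, transfer to $\vq_\omega$ via \eqref{uprime} and the multiplier bound again, and read off the divergence claim from the second equation of \eqref{3.3}. The only cosmetic difference is that you prove the multiplier estimate from the Gagliardo seminorm whereas the paper simply cites it as \cite[Lemma A.2]{CST:2011}, and the paper spells out the localisation argument for the continuity of $\rdiv:H^s\to H^{-1+s}$ that your divergence-form rewriting of $\tilde f_\omega$ handles implicitly.
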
 

\begin{proof}
Let us fix $\omega \in \Omega$.
Since $u_\omega$ is also a solution of the primal problem \eqref{primal-weak} 
(cf.~Lemma \ref{equivalence}), 
the regularity for $u_\omega$ and the bound on 
$\Vert u_\omega \Vert_{H^{1+s}(D)}$ follow immediately from 
\cite[Thm.~2.1]{TSGU:2012} provided
${\tilde{f}_\omega \in H^{-1+s}(D)}$. 
To show that $\rdiv(a_\omega\vec{g}_\omega) \in H^{-1+s}(D)$ we can 
use \cite[Lemma A.2]{CST:2011}, i.e. for any $\phi \in C^t(\overline{D})$ 
and $\psi \in H^s(D)$ with $0<s<t<1$,
it follows that
\begin{equation}
\label{lem:CST_A2}
\Vert \phi \psi  \Vert_{H^s({D})}\  \lesssim \ \Vert
\phi   \Vert_{C^t({\overline{D}})}  \Vert  \psi  \Vert_{H^s({D})} \ .
\end{equation}
If we apply this estimate with $\phi = a_\omega$
and $\psi = (g_\omega)_i$, $i=1,\dots,d$, we get
\[
\Vert a_\omega\vec{g}_\omega  \Vert_{H^s(D)} \ 
\lesssim
\ \Vert a_\omega \Vert_{C^t(\oD)} \Vert \vec{g}_\omega\Vert_{H^{s}(D)}
\,.
\]
Since $\rdiv$ is a linear and continuous operator from $H^s(\mathbb{R}^d)^d$ to $H^{-1+s}(\mathbb{R}^d)$ (cf.
\cite[Remark 6.3.14(b)]{Hackbusch}) it follows, as in the proof of \cite[Thm.~2.1]{TSGU:2012}, by a localisation 
argument that 
\[
\Vert \rdiv(a_\omega\vec{g}_\omega)  \Vert_{H^{-1+s}(D)} \ 
\lesssim
\ \Vert a_\omega \Vert_{C^t(\oD)} \Vert \vec{g}_\omega\Vert_{H^{s}(D)}
\,.
\]   
To bound $\Vert \vq_\omega  \Vert_{H^s(D)}$ we use \eqref{uprime} and again \eqref{lem:CST_A2} with $\phi = a_\omega$
and $\psi = \frac{\partial u_\omega}{\partial x_i}+(\vec{g}_\omega)_i$, $i=1,\dots,d$. 
We get
\[
\Vert \vq_\omega  \Vert_{H^s(D)} \ 
\lesssim
\ \Vert a_\omega \Vert_{C^t(\oD)} (\Vert u_\omega\Vert_{H^{1+s}(D)} + \Vert \vec{g}_\omega\Vert_{H^s(D)})\,.
\]
The fact that $\rdiv \vq_\omega \in H^{s}(D)$ and that its 
$H^{s}(D)$-norm is equal to that of $f_\omega$ for all $0\leq s \leq t$ follows from
the second equation in \eqref{3.3} which implies $\rdiv \vq_\omega \equiv_{L^2} f_\omega$.
\end{proof}

\begin{remark}
\label{full-reg}
For convex domains $D\subset \mathbb{R}^d$ and 
for input random fields $a$ that are sufficiently smooth such that $t=1$, e.g.
for the Mat\'ern covariance with $\nu>1$ or for the Gaussian covariance,
it is proved in \cite[Theorem 2.1]{TSGU:2012} that in fact 
$u_\omega \in H^2(D)$ $P$-a.s.~in $\omega \in \Omega$. 
Provided $f$ is also sufficiently smooth, that is $f\in H^1(D)$, then the Darcy
velocity $\vq_\omega \in H^1(D)^d$ and $\rdiv \vq_\omega \in H^1(D)$. 
In that case, the theoretical results below yield optimal error estimates. 
We will not state that explicitly every time.
\end{remark}

\section{Mixed finite element discretisation and error estimates}\label{sec:error}

The mixed finite element discretisation of (\ref{3.3}), for any 
$\omega \in \Omega$, is obtained by
choosing finite dimensional subspaces $\cV_h \subset \cV $ and 
$\cW_h \subset \cW$ and seeking 
 $(\vq_{h,\omega}, u_{h,\omega}) \in \cV_h \times \cW_h$ such that
\begin{equation}
\left.
\begin{array}{rclll}
\hspace*{1.5cm} m_\omega(\vq_{h,\omega},\vv_h) & + \ \ b(\vv_h,u_{h,\omega})
& = & G(\vv_h) &\mathrm{for\ all\ } \vv_h  \in \cV_h, \\[1ex]
b(\vq_{h,\omega},w_h)&  & = & F_\omega(w_h) 
&\mathrm{for\ all\ }
w_h \in \cW_h\ .
\end{array}
\right\}
\label{3.4}
\end{equation}
Here, for simplicity, we restrict our attention to the case
when $\cV_h$ is the lowest order Raviart-Thomas space on simplices 
\cite{RaviartThomas:1977}. 

Let $\cT_h$ denote a family of triangulations (meshes) of $D$ into conforming $d$-simplices $T \in \cT_h$ (i.e.
triangles for $d=2$ and tetrahedra for $d=3$).
We assume that $\cT_h$ has maximum mesh size $h:=\max_{T \in \cT_h} \diam(T)$ and is nondegenerate as $h\rightarrow 0$,
i.e. $\diam(T)/ \rho_T \leq \gamma_0$ for all $T \in \cT_h$, where $\rho_T$ denotes the radius of the largest closed
ball contained in
$\overline{T}$ with a constant $\gamma_0$ independent of $h$.
Let $\cE$ denote the set of all faces of the simplices in $\cT_h$, that is triangle edges ($d=2$) or tetrahedron
faces ($d=3$), respectively, with predescribed unit normal $\vec{\nu}_E$. 
Let $\cE_I$ and $\cE_D$ denote the subsets of $\cE$ consisting of interior faces $E \subset D$ and
boundary faces $E \subset \partial D$.
For each element $T \in \cT_h$ we define the space of shape functions 
\be
\label{3.6}
RT_0(T):= \{ \vv: T \rightarrow \mathbb{R}^d \ \vert \ \vv(\vx) = \vec{\alpha} + \beta \vx,\, \vec{\alpha} \in
\mathbb{R}^d,\, \beta \in \mathbb{R} \}\ ,
\ee
as well as the global space of discontinuous, piecewise $RT_0$ finite element 
functions
\begin{equation}\label{RT-1}
RT_{-1}(\cT_h) :=\{ \vv \ \vert \ \vv \vert_T \in RT_0(T) \ \forall T \in \cT_h\} \ .
\end{equation}
Finally, the lowest order Raviart-Thomas space $\cV_h$ is defined as 
\begin{equation}\label{RT0}
\cV_h := RT_0(\cT_h):= \{\vv \in RT_{-1}(\cT_h) \ \vert \  \, \vv \cdot \vec{\nu}_E|_E \text{ is continuous across } E,\, \forall E \in \cE_I  \} \ .
\end{equation}
Because of the special form of (\ref{3.6}) it is easily shown that for any $\vv \in RT_0(T)$ the normal
component $\vv \cdot \vec{\nu}_E$ is constant on any face $E$ of $T$. 
Moreover, $\vv_h \in \cV_h$ can be completely determined by specifying the constant value of
$\vv_h \cdot \vec{\nu}_E$ for each $E \in \cE_I \cup \cE_D$. 
In addition, we define $\cW_h$ to be the space of piecewise constant
functions on $D$ with respect to the mesh $\cT_h
$.
The pair $(\cV_h , \cW_h)$ enjoy the commuting diagram property 
\cite[p.109]{BBF:2013} which implies in particular that $\rdiv \cV_h = \cW_h$. 

It follows directly from \eqref{infsup} that $b$ is inf-sup stable with the same constant $k_0>0$ on the finite
element space $\mathcal{V}_h \times \mathcal{W}_h$. 
Similarly, $m_\omega$ is coercive on
\begin{equation}
Z_h \ = \ \{ \vv_h \in \cV_h : b(\vv_h , w_h ) = 0 \quad \text{for
  all} \quad w_h \in \cW_h\} \ ,\label{Zh}
\end{equation}
as well with the same coercivity constant $a_{\max}(\omega)^{-1}$ as in \eqref{coercive}.
It follows (analogously to Proposition~\ref{prop2.2}) from standard results on 
(discretised) mixed variational problems that the discrete variational 
problem \eqref{3.4} has a unique solution (see, e.g., 
\cite[Theorem 4.2.3]{BBF:2013}) with finite $p^\text{th}$ moments up to a certain order. 

\begin{proposition}
Under the Assumptions A1--A3, the family of discrete problems \eqref{3.4} has a 
unique solution $(\vq_h,u_h)$ with $\vq_h \in L^p(\Omega,\cV_h)$ and $u_h \in L^p(\Omega,\cW_h)$, for all $1 \le p < r$.
\end{proposition}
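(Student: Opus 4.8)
The plan is to follow the proof of Proposition~\ref{prop2.2} closely, with the finite-dimensional spaces $\cV_h,\cW_h$ in place of $\cV,\cW$. First I would fix $\omega\in\Omega$. As already observed in the text, $b$ is inf-sup stable on $\cV_h\times\cW_h$ with the same $\omega$-independent constant $k_0$ as in \eqref{infsup}, and $m_\omega$ is coercive on the discrete kernel $Z_h$ of \eqref{Zh} with the same constant $a_{\max}(\omega)^{-1}$ as in \eqref{coercive}; moreover the continuity bounds \eqref{bounds} for $m_\omega$, $b$, $F_\omega$ and $G_\omega$ hold verbatim in the discrete setting. Since A1--A2 give $0<a_{\max}(\omega)^{-1}\le a_{\min}(\omega)^{-1}<\infty$ for $P$-a.e.\ $\omega$, the standard theory of discrete mixed variational problems (see, e.g., \cite[Theorem 4.2.3]{BBF:2013}) provides a unique solution $(\vec{q}_{h,\omega},u_{h,\omega})\in\cV_h\times\cW_h$ of \eqref{3.4} together with an a priori bound
\[
\|\vec{q}_{h,\omega}\|_{\cV}+\|u_{h,\omega}\|_{\cW}\ \lesssim\ \Phi(\omega)\,\big(\|G_\omega\|_{\cV'}+\|F_\omega\|_{\cW'}\big)\ \lesssim\ \Phi(\omega)\,\big(\|f_\omega\|_{L^2(D)}+\|\vec{g}_\omega\|_{L^2(D)}+\|u_\Gamma\|_{H^{1/2}(\partial D)}\big),
\]
where, by inspection of the Brezzi constants (the coercivity constant $a_{\max}(\omega)^{-1}$, the continuity constant $\|m_\omega\|\le a_{\min}(\omega)^{-1}$, and the constant $k_0$), $\Phi(\omega)$ is a fixed polynomial in $a_{\max}(\omega)$ and $a_{\min}(\omega)^{-1}$. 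I would quote this estimate rather than rederive it, and use \eqref{bounds} for the second inequality.

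To pass from this pointwise statement to $\vec{q}_h\in L^p(\Omega,\cV_h)$ and $u_h\in L^p(\Omega,\cW_h)$, I would argue exactly as at the end of the proofs of Proposition~\ref{prop2.2} and Lemma~\ref{equivalence}. The prefactor $\Phi(\omega)$ is a finite sum of products of nonnegative powers of $a_{\max}(\omega)$ and $a_{\min}(\omega)^{-1}$; by A1 and the remark following A2 both of these random variables lie in $L^q(\Omega)$ for every $q\in[1,\infty)$, and since $L^q(\Omega)$ is stable under products (H\"older) and powers, so is $\Phi(\omega)$. By A3 together with the embedding $H^t(D)\hookrightarrow L^2(D)$ we have $\|f_\omega\|_{L^2(D)},\|\vec{g}_\omega\|_{L^2(D)}\in L^r(\Omega)$, while $\|u_\Gamma\|_{H^{1/2}(\partial D)}$ is deterministic. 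Given $p\in[1,r)$, choose $q\in[1,\infty)$ with $1/q+1/r=1/p$; then H\"older's inequality applied to the displayed bound shows that $\|\vec{q}_{h,\omega}\|_{\cV}$ and $\|u_{h,\omega}\|_{\cW}$ belong to $L^p(\Omega)$, which is the claim.

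The only genuinely new point, and the one I would be slightly careful about, is measurability: the spaces $L^p(\Omega,\cV_h)$, $L^p(\Omega,\cW_h)$ presuppose that $\omega\mapsto(\vec{q}_{h,\omega},u_{h,\omega})$ is Borel measurable. For a fixed mesh this is straightforward. Fixing bases of $\cV_h$ and $\cW_h$, the system \eqref{3.4} is equivalent to a finite linear algebraic system whose matrix entries are of the form $(a_\omega^{-1}\varphi_i,\varphi_j)_{L^2(D)}$ and whose right-hand side entries involve $\vec{g}_\omega$ and $f_\omega$ tested against fixed finite-element functions; these depend measurably on $\omega$ under A1--A3, and on the full-measure event $\{a_{\min}>0\}$ the unique solution is a rational, hence Borel, function of the matrix and right-hand side (Cramer's rule, with nonvanishing determinant). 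Composing, $\omega\mapsto(\vec{q}_{h,\omega},u_{h,\omega})$ is measurable, and combined with the integrability estimate above this completes the proof.
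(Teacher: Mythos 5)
Your argument is correct and is essentially the proof the paper intends: it states the result without a detailed proof, noting only that it follows ``analogously to Proposition~\ref{prop2.2}'' from the discrete inf-sup stability with the same constant $k_0$, the coercivity of $m_\omega$ on $Z_h$ with constant $a_{\max}(\omega)^{-1}$, the standard stability estimate of \cite[Theorem 4.2.3]{BBF:2013}, and Assumptions A1--A3 with H\"older's inequality -- exactly the steps you carry out. Your explicit treatment of measurability via the finite linear system and Cramer's rule is a detail the paper leaves implicit, but it does not constitute a different route.
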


\subsection{Velocity approximation}\label{sec:error:flux}

We fix a realisation $\omega \in \Omega$ and compare the solutions of 
\eqref{3.3} and \eqref{3.4}. To this end we use the following standard 
results from the theory of mixed finite elements in 
\cite{BBF:2013,DouglasRoberts:1985,FalkOsborn:1980}.
\begin{proposition}\label{prop2.6}
We have $Z_h \subset Z$, as well as
\begin{eqnarray*}
\Vert \vq_\omega - \vq_{h,\omega} \Vert_{H(\rdiv)} & \le & \left(1+\frac{1}{k_0}\right) \left(1 +
\frac{a_{\max}(\omega)}{a_{\min}(\omega)}\right)  \inf_{\vv_h \in \cV_h} \Vert \vq_\omega - \vv_h \Vert_{H(\rdiv)} \ \
\textnormal{and} \\[1ex]
\Vert \vq_\omega - \vq_{h,\omega} \Vert_{L^2(D)} \; & \le & \left(1 + \frac{a_{\max}(\omega)}{a_{\min}(\omega)}\right) \inf_{\vv_h \in \cV_h} \Vert \vq_\omega -
\vv_h \Vert_{L^2(D)} \ . 
\end{eqnarray*}
\end{proposition}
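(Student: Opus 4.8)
The proof follows the classical quasi-optimality argument for mixed finite elements (cf.\ \cite{BBF:2013,DouglasRoberts:1985,FalkOsborn:1980}), carried out for a fixed realisation $\omega$ and keeping the constants $a_{\min}(\omega),a_{\max}(\omega)$ explicit throughout. First I would dispose of the inclusion $Z_h\subset Z$: if $\vv_h\in Z_h$, then $(\rdiv\vv_h,w_h)_{L^2(D)}=-b(\vv_h,w_h)=0$ for every $w_h\in\cW_h$, and since the commuting diagram property gives $\rdiv\vv_h\in\rdiv\cV_h=\cW_h$, the admissible choice $w_h=\rdiv\vv_h$ forces $\rdiv\vv_h=0$; hence $b(\vv_h,w)=-(\rdiv\vv_h,w)_{L^2(D)}=0$ for every $w\in\cW$, i.e.\ $\vv_h\in Z$.

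Next I would subtract \eqref{3.4} from \eqref{3.3}, tested against $\vv_h\in\cV_h$ and $w_h\in\cW_h$, to obtain the Galerkin orthogonality relations
\[
m_\omega(\vq_\omega-\vq_{h,\omega},\vv_h)+b(\vv_h,u_\omega-u_{h,\omega})=0\quad\text{and}\quad b(\vq_\omega-\vq_{h,\omega},w_h)=0.
\]
Because $\rdiv\cV_h=\cW_h$, the second relation shows $\rdiv\vq_{h,\omega}=P_h f_\omega$, where $P_h$ is the $L^2(D)$-orthogonal projection onto $\cW_h$; hence $\rdiv(\vq_\omega-\vq_{h,\omega})=(I-P_h)f_\omega$, and every $\vv_h\in\cV_h$ with $\rdiv\vv_h=P_h f_\omega$ satisfies $\vec{e}_h:=\vv_h-\vq_{h,\omega}\in Z_h$. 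For such a $\vv_h$, testing the first orthogonality relation with $\vec{e}_h$ the term $b(\vec{e}_h,u_\omega-u_{h,\omega})=-(\rdiv\vec{e}_h,u_\omega-u_{h,\omega})_{L^2(D)}$ vanishes because $\rdiv\vec{e}_h=0$, so $m_\omega(\vq_\omega-\vq_{h,\omega},\vec{e}_h)=0$; writing $\vec{e}_h=(\vq_\omega-\vq_{h,\omega})+(\vv_h-\vq_\omega)$ then gives $m_\omega(\vec{e}_h,\vec{e}_h)=m_\omega(\vv_h-\vq_\omega,\vec{e}_h)$, and combining the coercivity \eqref{coercive} (using $\|\vec{e}_h\|_{H(\rdiv)}=\|\vec{e}_h\|_{L^2(D)}$ since $\rdiv\vec{e}_h=0$) with Cauchy--Schwarz yields
\[
\frac{1}{a_{\max}(\omega)}\|\vec{e}_h\|_{L^2(D)}^{2}\le m_\omega(\vec{e}_h,\vec{e}_h)=m_\omega(\vv_h-\vq_\omega,\vec{e}_h)\le\frac{1}{a_{\min}(\omega)}\|\vq_\omega-\vv_h\|_{L^2(D)}\|\vec{e}_h\|_{L^2(D)}.
\]
Thus $\|\vv_h-\vq_{h,\omega}\|_{L^2(D)}\le\tfrac{a_{\max}(\omega)}{a_{\min}(\omega)}\|\vq_\omega-\vv_h\|_{L^2(D)}$, and the triangle inequality produces the factor $1+\tfrac{a_{\max}(\omega)}{a_{\min}(\omega)}$. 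Since the canonical Raviart-Thomas interpolant $\Pi_h\vq_\omega$ (well defined by Theorem~\ref{prop:ChScTe}) lies in this admissible class, as $\rdiv\Pi_h\vq_\omega=P_h\rdiv\vq_\omega$, passing to the infimum over $\vv_h\in\cV_h$ gives the stated $L^2(D)$ estimate.

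For the $H(\rdiv)$ estimate I would, given an arbitrary $\vv_h\in\cV_h$, correct it into the admissible class at the price of the inf-sup constant. Since $b$ is inf-sup stable on $\cV_h\times\cW_h$ with the same constant $k_0$ as in \eqref{infsup}, there is $\vec{z}_h\in\cV_h$ with $\rdiv\vec{z}_h=P_h f_\omega-\rdiv\vv_h=P_h\rdiv(\vq_\omega-\vv_h)$ and $\|\vec{z}_h\|_{H(\rdiv)}\le\tfrac{1}{k_0}\|P_h\rdiv(\vq_\omega-\vv_h)\|_{L^2(D)}\le\tfrac{1}{k_0}\|\rdiv(\vq_\omega-\vv_h)\|_{L^2(D)}$. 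Then $\hat\vv_h:=\vv_h+\vec{z}_h$ has $\rdiv\hat\vv_h=P_h f_\omega$, so the kernel estimate just obtained applies to $\hat\vv_h$; using the norm $\|\vv\|_{H(\rdiv)}=\|\vv\|_{L^2(D)}+\|\rdiv\vv\|_{L^2(D)}$, the identity $\rdiv(\vq_\omega-\vq_{h,\omega})=\rdiv(\vq_\omega-\hat\vv_h)=(I-P_h)\rdiv(\vq_\omega-\vv_h)$, and the bound for $\vec{z}_h$, the triangle inequality gives
\[
\|\vq_\omega-\vq_{h,\omega}\|_{H(\rdiv)}\le\Bigl(1+\tfrac{a_{\max}(\omega)}{a_{\min}(\omega)}\Bigr)\|\vq_\omega-\hat\vv_h\|_{H(\rdiv)}\le\Bigl(1+\tfrac{1}{k_0}\Bigr)\Bigl(1+\tfrac{a_{\max}(\omega)}{a_{\min}(\omega)}\Bigr)\|\vq_\omega-\vv_h\|_{H(\rdiv)},
\]
and the infimum over $\vv_h\in\cV_h$ completes the argument.

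The step needing the most care is the decoupling of the velocity error from the otherwise uncontrolled pressure error: in a general mixed method the term $b(\vec{e}_h,u_\omega-u_{h,\omega})$ pollutes the velocity estimate, and it is precisely the structural facts $\rdiv\cV_h=\cW_h$ and $Z_h\subset Z$ — which allow one to restrict attention to test functions with divergence $P_h f_\omega$, making that term vanish — that make the clean bounds above possible; the inf-sup condition then only serves to reach an arbitrary $\vv_h\in\cV_h$ from this admissible subset. A secondary point, relevant for the $L^p(\Omega)$ analysis in later sections, is that $a_{\min}(\omega)$ and $a_{\max}(\omega)$ are genuine random variables, so keeping the constants explicit is exactly what allows Assumptions A1--A2 and H\"{o}lder's inequality to be invoked once these pathwise estimates are integrated over $\Omega$.
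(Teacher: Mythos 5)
The paper never proves Proposition~\ref{prop2.6}: it is quoted as a standard consequence of the mixed finite element theory in the cited references, so your task was essentially to reconstruct that classical argument, and most of what you write does so correctly. The inclusion $Z_h\subset Z$ via $\rdiv\cV_h=\cW_h$ and the choice $w_h=\rdiv\vv_h$ is fine, and your $H(\rdiv)$ estimate is the standard one: quasi-optimality over the divergence-constrained set obtained from coercivity on the discrete kernel, followed by the inf-sup correction $\vec{z}_h$ with $\Vert\vec{z}_h\Vert_{H(\rdiv)}\le k_0^{-1}\Vert\rdiv(\vq_\omega-\vv_h)\Vert_{L^2(D)}$, which is exactly where the factor $\bigl(1+1/k_0\bigr)$ comes from.

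There is, however, a genuine gap in the last step of your $L^2$ estimate. What your kernel argument actually proves is
\[
\Vert \vq_\omega-\vq_{h,\omega}\Vert_{L^2(D)}\;\le\;\Bigl(1+\tfrac{a_{\max}(\omega)}{a_{\min}(\omega)}\Bigr)\,
\inf\bigl\{\Vert\vq_\omega-\vv_h\Vert_{L^2(D)}\,:\;\vv_h\in\cV_h,\ \rdiv\vv_h=P_hf_\omega\bigr\},
\]
i.e.\ quasi-optimality with respect to the divergence-constrained affine subset (equivalently, a bound by $\Vert\vq_\omega-\Pi_h\vq_\omega\Vert_{L^2(D)}$, since $\Pi_h\vq_\omega$ lies in that subset). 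The sentence ``passing to the infimum over $\vv_h\in\cV_h$'' does not follow: the constrained infimum is in general larger than the unconstrained one, and exhibiting one admissible element does not bridge the two. The obvious repairs also fail: the inf-sup correction used for the $H(\rdiv)$ bound is only controlled in the $H(\rdiv)$ norm, so it reintroduces $\Vert\rdiv(\vq_\omega-\vv_h)\Vert_{L^2(D)}$ together with a $1/k_0$ factor, and $\Pi_h$ is not $L^2$-stable, so one cannot write $\Vert\vq_\omega-\Pi_h\vq_\omega\Vert_{L^2(D)}\lesssim\inf_{\vv_h\in\cV_h}\Vert\vq_\omega-\vv_h\Vert_{L^2(D)}$ either. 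Note that the constrained/interpolant form is precisely how the result is stated in the references the paper cites, and it is all that is used downstream, since Theorem~\ref{cor:mfe5} combines Proposition~\ref{prop2.6} with the interpolation estimate of Lemma~\ref{th:mfe2}; but as a proof of the second inequality literally as stated, with the infimum over all of $\cV_h$ and no dependence on $k_0$, your argument does not close, and you would need either to prove that stronger claim by a separate argument or to weaken the claim to the admissible set.
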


Now we  extend the classical error estimates for  lowest order Raviart-Thomas interpolation \cite{RaviartThomas:1977} to fractional order spaces. 
Surprisingly there does not seem to be a good source of a proof for this result, although the analogous result for
hexahedral elements in 3D is given in \cite[Lemma 3.3]{BGNR:2006}.
Let $\Pi_h:H^t(D)^d \cap \Hdiv \to \cV_h$ be the  interpolation operator onto the lowest 
order Raviart-Thomas space, as defined, for example, in \cite[\S~2.5]{BBF:2013}.
Note that for $0<t\leq 1$ the space $H^t(D)^d$, $d=2,3$, is continuously embedded in $L^r(D)^d$, for
some $r>2$ (see, e.g. \cite[Chapter 7]{Adams:2003}).
This is sufficient to ensure that $\Pi_h$ is well-defined even when $t$ approaches $0$ (see, e.g.
\cite[\S~2.5]{BBF:2013}).




\begin{lemma}
\label{th:mfe2}
Let $0 < t\leq 1$. 
Then, for any $\vv \in \Hdiv \cap H^t(D)^d$
\begin{equation}
\label{331a}
\Vert \vv - \Pi_h\vv \Vert_{L^2(D)} \; \lesssim \;  h^t  | \vv |_{H^t(D)} + h \Vert \rdiv \vv \|_{L^2(D)}\,.
\end{equation}
Moreover, for any $\vv \in \Hdiv$ with $\rdiv  \vv \in H^t(D)$,
\begin{equation}
\label{331b}
\Vert \rdiv (\vv - \Pi_h\vv) \Vert_{L^2(D)} \; \lesssim  \; h^t \,  \Vert \rdiv \vv \Vert_{H^t(D)}\, . 
\end{equation}
\end{lemma}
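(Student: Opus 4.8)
The plan is to prove \eqref{331a} and \eqref{331b} by reducing both to the reference simplex, exploiting that $\Pi_h$ reproduces constant vector fields and commutes with the divergence, and then invoking a Bramble--Hilbert / fractional Poincar\'e argument. I would dispose of \eqref{331b} first, as it is almost immediate. By the commuting diagram property recalled above, $\rdiv\Pi_h\vv=P_h(\rdiv\vv)$, where $P_h\colon L^2(D)\to\cW_h$ is the $L^2(D)$-orthogonal projection onto piecewise constants, so that $\rdiv(\vv-\Pi_h\vv)=(I-P_h)(\rdiv\vv)$. On each $T\in\cT_h$ the operator $P_h$ acts as the mean value, whence $\|(I-P_h)w\|_{L^2(T)}\le\inf_{c\in\mathbb{R}}\|w-c\|_{L^2(T)}$; scaling to the unit simplex $\hat T$ and using the fractional Poincar\'e--Wirtinger inequality $\inf_{c}\|\hat w-c\|_{L^2(\hat T)}\lesssim|\hat w|_{H^t(\hat T)}$ (valid because $H^t(\hat T)\hookrightarrow L^2(\hat T)$ is compact and the seminorm vanishes exactly on constants) gives $\|(I-P_h)w\|_{L^2(T)}\lesssim h_T^{\,t}|w|_{H^t(T)}$. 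Squaring, summing over $T$, and using that the Gagliardo seminorm is super-additive over the mesh, $\sum_{T\in\cT_h}|w|_{H^t(T)}^2\le|w|_{H^t(D)}^2$ (the double integrals range over pairwise disjoint subsets $T\times T\subset D\times D$), yields \eqref{331b} with $w=\rdiv\vv$.

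For \eqref{331a} I would argue element by element via the contravariant Piola transform $\mathcal{P}_T$ attached to the affine map $F_T(\hat{\vx})=B_T\hat{\vx}+b_T$; since $\Pi_h\circ\mathcal{P}_T=\mathcal{P}_T\circ\hat\Pi$, where $\hat\Pi$ is the reference interpolant onto $RT_0(\hat T)$ (a well-known property, cf.\ \cite{BBF:2013}), we have $(\vv-\Pi_h\vv)|_T=\mathcal{P}_T\big[(I-\hat\Pi)\hat{\vec v}\big]$ with $\hat{\vec v}:=\mathcal{P}_T^{-1}(\vv|_T)$. Everything then rests on the reference estimate
\[
\|(I-\hat\Pi)\hat{\vec v}\|_{L^2(\hat T)}\;\lesssim\;|\hat{\vec v}|_{H^t(\hat T)}+\|\rdiv\hat{\vec v}\|_{L^2(\hat T)}.
\]
To get it I would first establish boundedness of $\hat\Pi$ on $X:=H^t(\hat T)^d\cap H(\rdiv,\hat T)$, i.e.\ $\|\hat\Pi\hat{\vec w}\|_{L^2(\hat T)}\lesssim\|\hat{\vec w}\|_{H^t(\hat T)}+\|\rdiv\hat{\vec w}\|_{L^2(\hat T)}$. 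Expanding $\hat\Pi\hat{\vec w}$ in the fixed $RT_0(\hat T)$ basis dual to the face functionals, it suffices to bound each degree of freedom $\int_E\hat{\vec w}\cdot\vec\nu_E$, and for this I would use the generalized Green identity $\int_E\hat{\vec w}\cdot\vec\nu_E=\int_{\hat T}(\rdiv\hat{\vec w})\,\phi_E+\int_{\hat T}\hat{\vec w}\cdot\nabla\phi_E$, in which $\phi_E\in W^{1,p}(\hat T)$, for a suitable $p\in(1,2)$, is a fixed extension of the face indicator $\chi_E\in W^{1-1/p,p}(\partial\hat T)$: the first term is bounded by $\|\rdiv\hat{\vec w}\|_{L^2(\hat T)}$ and the second by $\|\hat{\vec w}\|_{L^{p'}(\hat T)}\lesssim\|\hat{\vec w}\|_{H^t(\hat T)}$, using exactly the embedding $H^t(\hat T)^d\hookrightarrow L^r(\hat T)^d$, $r>2$ (pick $p'\le r$).

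Since $(I-\hat\Pi)$ annihilates constant vector fields and $\rdiv$ is insensitive to subtracting a constant, the reference estimate follows from $\|(I-\hat\Pi)\hat{\vec v}\|_{L^2(\hat T)}\le\|I-\hat\Pi\|_{X\to L^2}\inf_{\vec\alpha\in\mathbb{R}^d}\big(\|\hat{\vec v}-\vec\alpha\|_{H^t(\hat T)}+\|\rdiv\hat{\vec v}\|_{L^2(\hat T)}\big)$ together with the fractional Poincar\'e inequality $\inf_{\vec\alpha}\|\hat{\vec v}-\vec\alpha\|_{H^t(\hat T)}\lesssim|\hat{\vec v}|_{H^t(\hat T)}$ (applied componentwise). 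Finally I would transport this back to $T$ using the standard Piola scaling bounds on a shape-regular simplex -- $\|\mathcal{P}_T\hat{\vec u}\|_{L^2(T)}\lesssim h_T^{1-d/2}\|\hat{\vec u}\|_{L^2(\hat T)}$, $|\hat{\vec v}|_{H^t(\hat T)}\lesssim h_T^{\,t+d/2-1}|\vv|_{H^t(T)}$, and $\|\rdiv\hat{\vec v}\|_{L^2(\hat T)}\lesssim h_T^{\,d/2}\|\rdiv\vv\|_{L^2(T)}$ -- which combine to $\|\vv-\Pi_h\vv\|_{L^2(T)}\lesssim h_T^{\,t}|\vv|_{H^t(T)}+h_T\|\rdiv\vv\|_{L^2(T)}$; squaring and summing over $T$ as in the first part gives \eqref{331a}.

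The step I expect to be the main obstacle is the boundedness of $\hat\Pi$ when $t$ is small: a direct trace estimate fails because $H^t(\hat T)$ with $t\le 1/2$ has no trace on the $(d-1)$-faces, which forces one to pair the normal component against a $W^{1,p}(\hat T)$ extension ($p<2$) of the face indicator and to use the $H(\rdiv)$-regularity together with the embedding into $L^r$, $r>2$; care is also needed with the exact fractional exponents in the Piola scaling, so that subtracting a constant vector field leaves the seminorm $h_T^{\,t}|\vv|_{H^t(T)}$ rather than the full $H^t$-norm. The remaining ingredients -- the commuting diagram, the fractional Poincar\'e--Wirtinger inequality on $\hat T$, and the summation over the mesh via super-additivity of the $H^t$-seminorm -- are routine.
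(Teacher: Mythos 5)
Your proof is correct and follows essentially the same route as the paper: the commuting property $\rdiv\Pi_h=P_h\rdiv$ plus a fractional approximation estimate for the $L^2$-projection gives \eqref{331b}, while \eqref{331a} is obtained by subtracting a constant (using that the interpolant preserves constants), invoking boundedness of the reference interpolant on $H^t\cap H(\rdiv)$, a fractional Bramble--Hilbert/Poincar\'e argument, and Piola scaling element by element. The only difference is that you prove the reference-element boundedness of $\hat\Pi$ directly (pairing the face degrees of freedom with a $W^{1,p}$ extension of the face indicator and using $H^t\hookrightarrow L^r$, $r>2$), whereas the paper cites this from Hiptmair's Lemma 3.15.
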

%
\begin{proof}
Let $\hat{T}$ be the $d-$dimensional unit simplex  and let $\hat{\Pi}$ denote the 
Raviart-Thomas interpolation operator of lowest order on $\hat{T}$.  
Moreover let $\hat{\vec{v}}$ 
be any function in $H(\rdiv, \hat{T}) \cap H^t(\hat{T})^d$.   
Then, since  $\hat{\Pi}$ preserves constants, we have, 
for any constant vector $\hat{\vec{p}}$, 
\begin{eqnarray}
\Vert \hat{\vec{v}} - \hat{\Pi}\hat{\vec{v}}\Vert_{L^2(\hat{T})}   \ &= & \ 
\Vert (\hat{\vec{v}}-\hat{\vec{p}})  - \hat{\Pi}( \hat{\vec{v}} - \hat{\vec{p}}) \Vert_{L^2(\hat{T})} \nonumber \\
\ & \leq & \ 
\Vert \hat{\vec{v}}-\hat{\vec{p}}\Vert_{L^2(\hat{T})}    \ + \ \Vert  \hat{\Pi}( \hat{\vec{v}} - \hat{\vec{p}}) \Vert_{L^2(\hat{T})}\ . \label{330} 
\end{eqnarray}
Remembering that the degrees of freedom of $\hat{\Pi}\hat{\vec{v}}$ 
are the integrals of the normal component of $\hat{\vec{v}}$ on each of the faces of $\hat{T}$, we can argue as in Lemma 3.15 and equation (3.39)  of \cite{Hi:02} to obtain 
\begin{eqnarray}
\Vert  \hat{\Pi}( \hat{\vec{v}} - \hat{\vec{p}}) \Vert_{L^2(\hat{T})}\ 
& \lesssim  & \ \Vert  \hat{\vec{v}} - \hat{\vec{p}} \Vert_{H^t(\hat{T})} 
\ + \Vert  \rdiv(\hat{\vec{v}} - \hat{\vec{p}}) \Vert_{L^2(\hat{T})},   \nonumber \\
& = &  \ \Vert  \hat{\vec{v}} - \hat{\vec{p}} \Vert_{L_2(\hat{T})}  \ + \ \vert \hat{\vec{v}} \vert_{H^t(\hat{T})}   
\ + \ \Vert  \rdiv \, \hat{\vec{v}} \Vert_{L^2(\hat{T})}\ , 
\label{331}
\end{eqnarray}
for any $t>0$. 
Inserting this into \eqref{330} we conclude that 
 \begin{eqnarray}
 \Vert \hat{\vec{v}} - \hat{\Pi}\hat{\vec{v}}\Vert_{L^2(\hat{T})}
& \lesssim  &  
\Vert  \hat{\vec{v}} - \hat{\vec{p}} \Vert_{L_2(\hat{T})}\ + \ 
\ \vert  \hat{\vec{v}} \vert_{H^t(\hat{T})} 
\ + \Vert  \rdiv \, \hat{\vec{v}} \Vert_{L^2(\hat{T})}\ . 
\label{332}\end{eqnarray}

Now as $\hat{\vec{p}}$ is arbitrary we can use the 
Bramble-Hilbert Lemma in fractional order spaces 
(e.g. \cite[Prop. 6.1]{DuSc:80}) to estimate the $L_2$ term on the right hand side of 
\eqref{332}, obtaining, in the end, 
\begin{eqnarray}
 \Vert \hat{\vec{v}} - \hat{\Pi}\hat{\vec{v}}\Vert_{L^2(\hat{T})}
& \lesssim  & \ 
\ \vert  \hat{\vec{v}} \vert_{H^t(\hat{T})} 
\ + \Vert  \rdiv \, \hat{\vec{v}} \Vert_{L^2(\hat{T})}\ . 
\label{336}
\end{eqnarray}

Now for any simplex $T \in \cT_h$, take any function $\vec{v} \in
H(\rdiv,T)\cap H^t(T)^d$.  Let $F_T: \hat{T} \rightarrow T$ be the usual affine 
map  with (constant) Jacobian $DF_T$ and set $J_T = \mathrm{det}(DF_T)$. The 
Piola transform of $\vec{v}$ is 
$\hat{\vec{v}} = J_T (DF_T)^{-1} (\vec{v}\circ F_T)$,  and simple scaling arguments 
(see, e.g. \cite{BGNR:2006}) show that 
$\vert \hat{\vec{v}} \vert_{H^t(\hat{T})} \sim h_T^{1/2+t} 
\vert \vec{v} \vert_{H^t(T)}$. Moreover $\hat{\Pi} \hat{\vec {v}} = 
\widehat{\Pi_T\vec{v}}$, where $\Pi_T$ is the Raviart-Thomas interpolation operator  on $T$.     
Using  these results in \eqref{336} we obtain 
\begin{eqnarray*}
 \Vert {\vec{v}} - {\Pi}_T{\vec{v}}\Vert_{L^2({T})}
& \lesssim  &  
\ h_T^t \vert  {\vec{v}} \vert_{H^t({T})} 
\ + h_T \Vert  \rdiv \, {\vec{v}} \Vert_{L^2({T})}\ . 
\end{eqnarray*}
Then, squaring and summing over all $T \in \cT_h$ we obtain 
\eqref{331a}.

The second bound is simpler since  
$\rdiv( \Pi_h \cdot) = P_h (\rdiv \cdot)$,  where $P_h$ is  the $L^2$-orthogonal 
projection $P_h:\cW \to \cW_h$  (see e.g. \cite[Prop. 2.5.2]{BBF:2013}). 
Then we have 
$$
\Vert \rdiv (\vv - \Pi_h\vv) \Vert_{L^2(D)}  \ = \ \Vert (I  - P_h) \rdiv \vv 
\Vert_{L^2(D)}\ , 
$$
and \eqref{331b} follows directly by standard polynomial approximation results in fractional order spaces (see, e.g.  \cite{ScottZhang:1990}). 
\end{proof}

The next theorem now follows immediately by combining Proposition~\ref{prop2.6} and Lemma~\ref{th:mfe2} with
Theorem~\ref{prop:ChScTe}. 

\begin{theorem} 
\label{cor:mfe5}
Let the assumptions of Theorem \ref{prop:ChScTe} hold. 
Then we have $P$-a.s. in $\omega \in \Omega$ and for all 
\ $0<s<\min(t,\frac{\pi}{\theta_{\max}})<1$ that 
\vspace{-1ex}
\begin{equation}
\label{eq:mfe5}
\Vert \vq_\omega - \vq_{h,\omega}  \Vert_{L^2(D)} \lesssim  C_q(\omega) h^s
\quad \text{ and } \quad
\Vert \vq_\omega - \vq_{h,\omega}  \Vert_{\Hdiv} \lesssim  \left(C_q(\omega)+\frac{\amax(\omega)}{\amin(\omega)}\Vert
f_\omega\Vert_{H^t(D)}\right) h^s 
\end{equation}
where\vspace{-1ex}
\begin{equation}
\label{eq:mfe5b}
C_q(\omega) \; := \; \frac{\amax(\omega)\Vert a_\omega \Vert_{C^t(\overline{D})}}{\amin(\omega)}
\,\left(C_{\mathrm{reg}}(\omega)+\Vert\vec{g}_\omega\Vert_{H^t(D)}\right)\,  .
\end{equation}
\end{theorem}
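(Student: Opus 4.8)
The plan is to derive the two bounds in \eqref{eq:mfe5} by concatenating the a priori estimates already in hand. First I would apply Proposition~\ref{prop2.6}, which gives the quasi-optimality of the mixed finite element approximation in both the $L^2(D)$- and the $H(\rdiv)$-norms, with realisation-dependent constants $1+a_{\max}(\omega)/a_{\min}(\omega)$ (and an extra $1+1/k_0$ in the $H(\rdiv)$-case). Taking $\vv_h = \Pi_h\vq_\omega$ as the competitor in each infimum reduces everything to the Raviart-Thomas interpolation error, which is controlled by Lemma~\ref{th:mfe2}: the bound \eqref{331a} gives $\Vert \vq_\omega - \Pi_h\vq_\omega\Vert_{L^2(D)} \lesssim h^s|\vq_\omega|_{H^s(D)} + h\Vert\rdiv\vq_\omega\Vert_{L^2(D)}$, and \eqref{331b} gives $\Vert\rdiv(\vq_\omega-\Pi_h\vq_\omega)\Vert_{L^2(D)} \lesssim h^s\Vert\rdiv\vq_\omega\Vert_{H^t(D)} = h^s\Vert f_\omega\Vert_{H^t(D)}$, using $\rdiv\vq_\omega \equiv_{L^2} f_\omega$ from Theorem~\ref{prop:ChScTe}. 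Since $0<s<1$ and the mesh size is bounded, $h \le h^s$ (up to a constant depending on $\diam(D)$), so the $h$-term in \eqref{331a} can be absorbed into the $h^s$-term, leaving a clean $h^s$ rate.

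Next I would substitute the regularity estimates of Theorem~\ref{prop:ChScTe} for the seminorms appearing on the right-hand side. That theorem bounds $|\vq_\omega|_{H^s(D)} \le \Vert\vq_\omega\Vert_{H^s(D)} \lesssim \Vert a_\omega\Vert_{C^t(\overline D)}\big(C_{\mathrm{reg}}(\omega) + \Vert\vec{g}_\omega\Vert_{H^s(D)}\big)$, and since $s < t$ the Sobolev norm $\Vert\vec{g}_\omega\Vert_{H^s(D)} \le \Vert\vec{g}_\omega\Vert_{H^t(D)}$ (similarly inside $C_{\mathrm{reg}}$, where $\Vert\vec{g}_\omega\Vert_{H^s(D)}$ and $\Vert u_\Gamma\Vert_{H^{1/2+s}(\partial D)}$ are dominated by their $t$-counterparts, which is why the defining expression for $C_q(\omega)$ uses $\Vert\vec g_\omega\Vert_{H^t(D)}$). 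Also $\Vert\rdiv\vq_\omega\Vert_{L^2(D)} = \Vert f_\omega\Vert_{L^2(D)}$, which is already a summand inside $C_{\mathrm{reg}}(\omega)$ up to the positive prefactor, so it too is absorbed. Collecting the prefactors, the $L^2$-estimate becomes $\Vert\vq_\omega - \vq_{h,\omega}\Vert_{L^2(D)} \lesssim \big(1 + a_{\max}(\omega)/a_{\min}(\omega)\big)\Vert a_\omega\Vert_{C^t(\overline D)}\big(C_{\mathrm{reg}}(\omega) + \Vert\vec g_\omega\Vert_{H^t(D)}\big) h^s$, and since $a_{\min}(\omega) \le a_{\max}(\omega)$ we have $1 + a_{\max}/a_{\min} \lesssim a_{\max}/a_{\min}$ (say, $\le 2a_{\max}/a_{\min}$), which is exactly $C_q(\omega)$ as defined in \eqref{eq:mfe5b}.

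For the $H(\rdiv)$-estimate I would add the divergence contribution: from Proposition~\ref{prop2.6} the $H(\rdiv)$-error is $\lesssim (1+a_{\max}(\omega)/a_{\min}(\omega))\,\Vert\vq_\omega-\Pi_h\vq_\omega\Vert_{H(\rdiv)}$, and $\Vert\vq_\omega - \Pi_h\vq_\omega\Vert_{H(\rdiv)}^2 = \Vert\vq_\omega-\Pi_h\vq_\omega\Vert_{L^2(D)}^2 + \Vert\rdiv(\vq_\omega-\Pi_h\vq_\omega)\Vert_{L^2(D)}^2$. The $L^2$-part is handled exactly as above (giving $C_q(\omega)h^s / (1+a_{\max}/a_{\min})$ after dividing off the quasi-optimality constant), while the divergence part contributes $h^s\Vert f_\omega\Vert_{H^t(D)}$ times the $(1+a_{\max}/a_{\min})$ prefactor, i.e. a term $\lesssim (a_{\max}(\omega)/a_{\min}(\omega))\Vert f_\omega\Vert_{H^t(D)} h^s$. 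Adding the two gives precisely the claimed bound $\big(C_q(\omega) + (a_{\max}(\omega)/a_{\min}(\omega))\Vert f_\omega\Vert_{H^t(D)}\big) h^s$.

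There is really no single hard step here — the theorem is by design an immediate corollary — but the point requiring the most care is the bookkeeping of the realisation-dependent constants: one must check that each seminorm produced by the interpolation and regularity estimates is genuinely dominated (for $0<s<t$) by a term already present in $C_{\mathrm{reg}}(\omega)$ or by $\Vert\vec g_\omega\Vert_{H^t(D)}$ or $\Vert f_\omega\Vert_{H^t(D)}$, so that the final constant can be written in the compact form \eqref{eq:mfe5b} without any hidden $\omega$-dependence leaking out, and that the switch from $h$ to $h^s$ is justified by $h$ staying bounded. I would state these absorptions explicitly but not belabour them.
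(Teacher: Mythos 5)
Your argument is correct and follows essentially the same route as the paper, which simply notes that the theorem ``follows immediately by combining Proposition~\ref{prop2.6} and Lemma~\ref{th:mfe2} with Theorem~\ref{prop:ChScTe}'' -- i.e.\ quasi-optimality with $\vv_h=\Pi_h\vq_\omega$, the fractional-order interpolation bounds applied with exponent $s$ (resp.\ $t$ for the divergence), and the regularity estimates, with $h\lesssim h^s$. The only step deserving an explicit word is the absorption of the term $\frac{\amax(\omega)}{\amin(\omega)}\Vert f_\omega\Vert_{L^2(D)}\,h$ into $C_q(\omega)h^s$ in the $L^2$-bound: it works because $\Vert a_\omega\Vert_{C^t(\overline D)}\ge \amax(\omega)\ge \amin(\omega)$, so the prefactor multiplying $\Vert f_\omega\Vert_{L^2(D)}$ inside $\Vert a_\omega\Vert_{C^t(\overline D)}C_{\mathrm{reg}}(\omega)$ is at least one.
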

Due to Assumptions A1--A3 the following corollary is a consequence of H\"older's inequality:
Recall that A1--A2 imply $a_{\max}$, $1/a_{\min}$, and $\Vert a_\omega\Vert_{C^t(\overline{D})}$ are in
$L^p(\Omega)$, for all $p \in [1,\infty)$.
Thus, any product of these quantities with a finite number of factors is again in $L^{p}(\Omega)$, for all $p
\in [1,\infty)$, due to H\"{o}lder's inequality.
Applying H\"{o}lder's inequality again together with Assumption A3 we conclude that $C_{\mathrm{reg}} \in
L^p(\Omega)$ and $C_{q} \in L^p(\Omega)$, for all $p<r$.
\begin{corollary}
\label{cor:velocity}
Let $t \in (0,1)$ and $r \in [1,\infty)$ be the parameters in Assumptions A1--A3. 
Then, under the assumptions of Theorem \ref{prop:ChScTe} and with $0<s<\min(t,\frac{\pi}{\theta_{\max}})<1$ 
we have
\[
\Vert \vq - \vq_h  \Vert_{L^p(\Omega, L^2(D))}   \lesssim    h^s \quad \text{and} \quad
\Vert \vq - \vq_{h}  \Vert_{L^p(\Omega, \Hdiv)}   \lesssim    h^s\,, \quad
\text{for all} \ \ p < r.
\]
\end{corollary}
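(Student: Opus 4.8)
The plan is to obtain the result directly from the pathwise bounds of Theorem~\ref{cor:mfe5} by applying $\|\cdot\|_{L^p(\Omega)}$ to both sides and checking that the random constants appearing there have finite $p$-th moments for every $p<r$.

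First I would fix $p<r$ and take the $L^p(\Omega)$-norm in \eqref{eq:mfe5}. Since $h^s$ is deterministic, this gives
\[
\|\vq - \vq_h\|_{L^p(\Omega,L^2(D))} \;\lesssim\; \|C_q\|_{L^p(\Omega)}\, h^s
\]
and
\[
\|\vq - \vq_h\|_{L^p(\Omega,\Hdiv)} \;\lesssim\; \Big( \|C_q\|_{L^p(\Omega)} + \big\| \tfrac{\amax}{\amin}\,\|f_\omega\|_{H^t(D)} \big\|_{L^p(\Omega)} \Big)\, h^s ,
\]
so the task reduces to showing $C_q \in L^p(\Omega)$ and $\tfrac{\amax}{\amin}\|f_\omega\|_{H^t(D)} \in L^p(\Omega)$ for all $p<r$, with norms independent of $h$.

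Second I would inspect the structure of $C_q$ in \eqref{eq:mfe5b} and of $C_{\mathrm{reg}}$ in Theorem~\ref{prop:ChScTe}: each is a finite sum of products, and every product consists of (i) a bounded number of factors drawn from $\amax$, $1/\amin$ and $\|a_\omega\|_{C^t(\overline D)}$, which by A1--A2 lie in $L^q(\Omega)$ for \emph{every} $q\in[1,\infty)$, times (ii) at most one factor drawn from $\|f_\omega\|_{L^2(D)}$, $\|\vec g_\omega\|_{H^s(D)}$, or the deterministic $\|u_\Gamma\|_{H^{1/2+s}(\partial D)}$, which by A3 (together with the embeddings $H^t(D)\hookrightarrow H^s(D)$ and $H^{1/2+t}(\partial D)\hookrightarrow H^{1/2+s}(\partial D)$ for $s<t$) lies in $L^r(\Omega)$ or is constant. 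Hölder's inequality then shows that a product of finitely many random variables each in $L^q(\Omega)$ for all $q<\infty$ is again in $L^q(\Omega)$ for all $q<\infty$; multiplying such a variable by one in $L^r(\Omega)$ and writing $\tfrac1p = \tfrac1r + \tfrac1q$ for a suitable finite $q$ (possible precisely because $p<r$) gives membership in $L^p(\Omega)$. The identical argument applies to $\tfrac{\amax}{\amin}\|f_\omega\|_{H^t(D)}$, now using the $H^t$-part of A3 for the given $t$. This establishes both claimed bounds.

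The only point needing a little care is the bookkeeping of exponents: one must note that the number of $a$-factors in $C_q$ and $C_{\mathrm{reg}}$ is genuinely finite (so no $L^\infty$ issue arises), and that the strict inequality $p<r$ is exactly what guarantees the existence of a finite Hölder conjugate $q$ with $\tfrac1p=\tfrac1r+\tfrac1q$. Everything else is a mechanical combination of Theorem~\ref{cor:mfe5} with Hölder's inequality, essentially as already outlined in the paragraph preceding the statement.
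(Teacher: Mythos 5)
Your proposal is correct and follows essentially the same route as the paper: take the pathwise bounds of Theorem~\ref{cor:mfe5}, then use H\"older's inequality together with A1--A3 to show that $C_q$ (via $C_{\mathrm{reg}}$) and $\frac{a_{\max}}{a_{\min}}\|f_\omega\|_{H^t(D)}$ lie in $L^p(\Omega)$ for all $p<r$, the strict inequality $p<r$ supplying the finite H\"older conjugate. This is exactly the argument sketched in the paragraph preceding the corollary in the paper.
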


\subsection{Pressure approximation}\label{sec:error:press}

Recall that realisations $a_\omega$ of the diffusion
coefficient are in general only H\"{o}lder continuous with exponent
$0<t\leq 1$  (cf. Assumption A2).
Standard arguments from \cite{BBF:2013} would lead only to
a pressure approximation error (for each sample $\omega \in \Omega$) of order 
$h^s$, for all $s<t$. We will show that the approximation error is actually of 
order $h$.
To prove this estimate we first prove an auxiliary result, based on a duality argument.
This estimates the difference between the mixed finite element approximation and the
$L^2$-orthogonal projection of the exact pressure in
$\mathcal{W}_h$. This is similar to the arguments in
\cite{DouglasRoberts:1985,FalkOsborn:1980} but surprisingly we have
not been able to find the error bound in \eqref{ubound} in the literature.

Let $P_h$ denote the $L^2$-orthogonal projection introduced in the proof of Lemma~\ref{th:mfe2}.
Let $w \in \cW$ and note that since $\rdiv \cV_h = \cW_h$, we also have 
\begin{equation}
\label{eq:bortho}
b(\vv_h,P_h w) = -(\rdiv \vv_h,P_h w)_{L^2(D)} =  -(\rdiv \vv_h,w)_{L^2(D)} =
b(\vv_h,w), \qquad \textnormal{for all } \vv_h \in \cV_h\ .
\end{equation}
Before we move on, recall the following classical result (see e.g. \cite{RaviartThomas:1977})
\begin{equation}
\label{lem:L2approx}
\|w - P_h w \|_{L^2(D)} \lesssim h \|w\|_{H^1(D)} \qquad w \in H^1(D) \ .
\end{equation}
\begin{lemma}
\label{th:mfe6}
Under the assumptions of Theorem \ref{prop:ChScTe}, for $P$-a.s. $\omega \in \Omega$
\begin{equation}
\label{eq:dualbound}
\Vert P_h u_\omega - u_{h,\omega} \Vert_{L^2(D)} \;\lesssim\;
\underbrace{\frac{a_{\max}(\omega)}{a^2_{\min}(\omega)} \left(\frac{a_{\max}(\omega) \Vert a_\omega
\Vert^3_{C^t(\overline{D})}}{a^4_{\min}(\omega)} \;C_q(\omega) \,+\, \|f_\omega\|_{H^{t}(D)}
\right) }_{=: C_u(\omega)} \; h^{2s} \, . 
\end{equation}
\end{lemma}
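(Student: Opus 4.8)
The plan is to run a duality argument in the mixed setting, in the spirit of \cite{DouglasRoberts:1985,FalkOsborn:1980}, but keeping explicit track of all dependence on the random coefficient. Fix $\omega\in\Omega$ and set $\psi:=P_h u_\omega-u_{h,\omega}\in\cW_h$. I would introduce the adjoint mixed problem: find $(\vec\phi,\chi)\in\cV\times\cW$ with
\[
m_\omega(\vec\eta,\vec\phi)+b(\vec\eta,\chi)=0\ \ \text{for all}\ \vec\eta\in\cV,
\qquad
b(\vec\phi,w)=-(w,\psi)_{L^2(D)}\ \ \text{for all}\ w\in\cW.
\]
Since $m_\omega$ is symmetric, this is exactly problem \eqref{3.3} with data $\vec g_\omega=0$, $u_\Gamma=0$ and $f_\omega$ replaced by $\psi$, so Proposition~\ref{prop2.2} gives a unique solution, and Lemma~\ref{equivalence} together with Theorem~\ref{prop:ChScTe} applied to this data yield $\chi\in H^{1+s}(D)$ and $\vec\phi\in H^s(D)^d\cap\Hdiv$ with $\rdiv\vec\phi=\psi$, together with the bounds
\[
\|\chi\|_{H^1(D)}\lesssim\frac{1}{\amin(\omega)}\|\psi\|_{L^2(D)},
\qquad
\|\vec\phi\|_{H^s(D)}\lesssim\frac{\amax(\omega)\,\|a_\omega\|_{C^t(\overline D)}^3}{\amin^4(\omega)}\|\psi\|_{L^2(D)}.
\]
Here Theorem~\ref{prop:ChScTe} still applies even though $\psi$ is merely in $L^2(D)$, since only $\|f_\omega\|_{L^2(D)}$ enters $C_{\mathrm{reg}}$ and there is no boundary data or $\vec g$ term; this, and checking that every constant above is captured by $C_u(\omega)$, is a point that needs care.

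Testing the second adjoint equation with $w=\psi$ gives $\|\psi\|_{L^2(D)}^2=-b(\vec\phi,\psi)$. Using the commuting diagram property $\rdiv(\Pi_h\vec\phi)=P_h(\rdiv\vec\phi)$ of the pair $(\cV_h,\cW_h)$ together with $\psi\in\cW_h$, one finds $b(\vec\phi-\Pi_h\vec\phi,\psi)=0$, hence $\|\psi\|_{L^2(D)}^2=-b(\Pi_h\vec\phi,\psi)$. Writing $\psi=P_h u_\omega-u_{h,\omega}$, replacing $b(\Pi_h\vec\phi,P_h u_\omega)$ by $b(\Pi_h\vec\phi,u_\omega)$ via \eqref{eq:bortho}, and then subtracting the first equation of \eqref{3.4} (tested with $\vv_h=\Pi_h\vec\phi$) from the first equation of \eqref{3.3} (tested with $\vv=\Pi_h\vec\phi$) so that the $G_\omega$-terms cancel, I arrive at the key identity
\[
\|\psi\|_{L^2(D)}^2=m_\omega(\vq_\omega-\vq_{h,\omega},\Pi_h\vec\phi)
=m_\omega(\vq_\omega-\vq_{h,\omega},\vec\phi)+m_\omega(\vq_\omega-\vq_{h,\omega},\Pi_h\vec\phi-\vec\phi).
\]
For the first term, the first adjoint equation with $\vec\eta=\vq_\omega-\vq_{h,\omega}$ gives $m_\omega(\vq_\omega-\vq_{h,\omega},\vec\phi)=-b(\vq_\omega-\vq_{h,\omega},\chi)$; since $b(\vq_\omega-\vq_{h,\omega},w_h)=0$ for all $w_h\in\cW_h$ (second equations of \eqref{3.3} and \eqref{3.4}) and $\rdiv(\vq_\omega-\vq_{h,\omega})=(I-P_h)f_\omega$, this equals $((I-P_h)f_\omega,(I-P_h)\chi)_{L^2(D)}$, bounded by $\|(I-P_h)f_\omega\|_{L^2(D)}\|(I-P_h)\chi\|_{L^2(D)}\lesssim h^{t}\|f_\omega\|_{H^t(D)}\cdot h\,\|\chi\|_{H^1(D)}$ using \eqref{lem:L2approx} and a standard $L^2$-projection estimate; since $1+t\ge 2s$ and $\amin(\omega)\le\amax(\omega)$, this is $\lesssim\frac{\amax(\omega)}{\amin^2(\omega)}\|f_\omega\|_{H^t(D)}\,h^{2s}\,\|\psi\|_{L^2(D)}$.

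For the second term I bound $|m_\omega(\vq_\omega-\vq_{h,\omega},\Pi_h\vec\phi-\vec\phi)|\le\frac{1}{\amin(\omega)}\|\vq_\omega-\vq_{h,\omega}\|_{L^2(D)}\|\vec\phi-\Pi_h\vec\phi\|_{L^2(D)}$, invoke Theorem~\ref{cor:mfe5} for $\|\vq_\omega-\vq_{h,\omega}\|_{L^2(D)}\lesssim C_q(\omega)h^s$, and apply \eqref{331a} (with exponent $s$) together with the adjoint regularity to get $\|\vec\phi-\Pi_h\vec\phi\|_{L^2(D)}\lesssim h^s|\vec\phi|_{H^s(D)}+h\|\rdiv\vec\phi\|_{L^2(D)}\lesssim h^s\frac{\amax(\omega)\|a_\omega\|_{C^t(\overline D)}^3}{\amin^4(\omega)}\|\psi\|_{L^2(D)}$, absorbing the lower-order $h\|\psi\|_{L^2(D)}$ term by using $\amax(\omega)\le\|a_\omega\|_{C^t(\overline D)}$ to see the prefactor is $\ge1$. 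Adding the two contributions, dividing by $\|\psi\|_{L^2(D)}$, and once more using $\amin(\omega)\le\amax(\omega)$ to consolidate powers, yields $\|\psi\|_{L^2(D)}\lesssim C_u(\omega)h^{2s}$. The main obstacle is the reduced regularity of the adjoint state: because $\vec\phi$ lies only in $H^s(D)^d$ and not in $H^1(D)^d$, the interpolation error $\|\vec\phi-\Pi_h\vec\phi\|_{L^2(D)}$ is only $O(h^s)$, and it is precisely the product of this with the $O(h^s)$ velocity error that produces the rate $h^{2s}$ rather than the classical $h^2$.
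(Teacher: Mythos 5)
Your argument is correct, and it reaches the same bound (in fact with marginally sharper constants), but it runs the duality argument differently from the paper. The paper introduces the mixed FE solution $(\vz_h,\phi_h)$ of the dual problem and uses the error representation $\Vert P_h u-u_h\Vert_{L^2(D)}^2=m_\omega(\vq_h-\vq,\vz_h-\vz)+b(\vq-\vq_h,\phi-P_h\phi)$; the dual velocity error $\Vert\vz-\vz_h\Vert_{L^2(D)}$ is then bounded by reusing Theorem~\ref{cor:mfe5} for the dual data, and the pressure term by \eqref{lem:L2approx} and the $H^1$ bound \eqref{eq:H1bound}, so the $f$-dependent contribution enters as $\Vert\rdiv(\vq-\vq_h)\Vert\,\Vert\phi-P_h\phi\Vert\sim h^{1+s}$. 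You instead never discretise the dual problem: you test with the Raviart--Thomas interpolant $\Pi_h\vec\phi$ of the continuous dual velocity, use the commuting diagram property to remove the divergence part, and split as $m_\omega(\vq-\vq_h,\vec\phi)+m_\omega(\vq-\vq_h,\Pi_h\vec\phi-\vec\phi)$; the first piece becomes $((I-P_h)f,(I-P_h)\chi)\lesssim h^{1+t}$ via $\rdiv(\vq-\vq_h)=(I-P_h)f$, and the second combines Theorem~\ref{cor:mfe5} with the interpolation estimate \eqref{331a} and the dual regularity from Theorem~\ref{prop:ChScTe}. Both routes require the same observation --- which you rightly flag --- that the dual datum is only in $L^2(D)$, so only its $L^2$-norm may appear in the regularity constants; your algebra and the absorption of the lower-order $h\Vert\psi\Vert_{L^2(D)}$ term (using $\amax\le\Vert a_\omega\Vert_{C^t(\overline D)}$ and $\amin\le\amax$) check out. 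What each buys: the paper's version is economical given that Theorem~\ref{cor:mfe5} is already in hand and avoids interpolating the low-regularity dual velocity directly, whereas your version dispenses with the discrete dual solve altogether, stays closer to the classical Douglas--Roberts/Falk--Osborn mechanics, and makes explicit that the doubled rate $h^{2s}$ comes from the product of the $O(h^s)$ primal velocity error with the $O(h^s)$ interpolation error of the $H^s$ dual velocity.
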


\begin{proof}
Following the argument in \cite[p. 432]{BBF:2013}, 
consider the dual mixed problem to find $(\vz,\phi) \in \cV \times \cW$, s.t.
\begin{equation}
\begin{array}{rclll}
\hspace*{1.5cm} m_\omega(\vv,\vz) & + \ \ b(\vv,\phi)
& = & 0\ ,&\mathrm{for\ all\ } \vv  \in \cV, \\[1ex]
b(\vz,w)&  & = & (P_h u-u_h,w)_{L^2(D)} \ , &\mathrm{for\ all\ }
w \in \cW\; .
\end{array}
\label{standard-dual}
\end{equation}
Recall that $P_h u-u_h$ and the dual solution $(\vz,\phi)$ depend on the sample $\omega \in \Omega$ but we shall omit this relation in what follows.
In the associated discrete dual problem we shall use the subscript $h$
and replace $\cV$ by $\cV_h$ and $\cW$ by $\cW_h$, respectively. In
particular, we denote by $(\vz_h,\phi_h) \in \cV_h \times \cW_h$ the
mixed finite element solution of \eqref{standard-dual}.

Due to \eqref{eq:bortho}, it follows from the second equation in the discrete version of \eqref{standard-dual} that
\begin{equation}
\label{extra}
\Vert P_h u -u_h \Vert_{L^2(D)}^2 
=
b(\vz_h,P_h u-u_h) 
= 
b(\vz_h,u-u_h) 
=
m_\omega(\vq_h-\vq,\vz_h)
\end{equation}
where in the last step we used \eqref{3.3} and \eqref{3.4} with test function
$\vv=\vv_h=\vz_h$.
Now, using the first equation in \eqref{standard-dual} with test
function $\vv = \vq_h-\vq$ we further deduce that
\begin{equation*}
\begin{split}
\Vert P_h u -u_h \Vert_{L^2(D)}^2  
=
m_\omega(\vq_h-\vq,\vz_h-\vz) + b(\vq-\vq_h,\phi) 
=
m_\omega(\vq_h-\vq,\vz_h-\vz) + b(\vq-\vq_h,\phi-P_h \phi) \ .
\end{split}
\end{equation*}
In the final step we have simply used
the second equations in \eqref{3.3} and \eqref{3.4} with test function
$w=w_h=P_h \phi$, respectively.
A simple application of the Cauchy-Schwarz inequality leads to
\begin{equation}
\label{dualbound}
\Vert P_h u -u_h \Vert_{L^2(D)}^2 \lesssim
 \frac{1}{a_{\min}(\omega)} \Vert \vq-\vq_h \Vert_{L^2(D)} \Vert
\vz-\vz_h \Vert_{L^2(D)} + \Vert \rdiv(\vq-\vq_h) \Vert_{L^2(D)} \Vert
\phi - P_h \phi \Vert_{L^2(D)}  \,.
\end{equation}
Since $m_\omega$ is symmetric, \eqref{standard-dual} is a special case
of \eqref{3.3} with data $\vec{g}_{dual} \equiv \vec{0}$, 
$f_{dual} = u_h-P_h u \in L^2(D)$ and $u_{\Gamma,dual} \equiv 0$.
This allows us to bound the error $\|\vz - \vz_h\|_{L^2(D)}$ using Theorem~\ref{cor:mfe5}.
The bound \eqref{eq:mfe5} (applied to $\vq=\vz$) gives
\[
\|\vz - \vz_h\|_{L^2(D)} \lesssim C_q(\omega)\, h^s =  \frac{a^2_{\max}(\omega) \Vert a_\omega
\Vert^3_{C^t(\overline{D})}}{a^5_{\min}(\omega)} \Vert P_h u
-u_h \Vert_{L^2(D)} \; h^s \ .
\]
Moreover, it follows from \eqref{lem:L2approx} (applied to $w =
\phi$) and \eqref{eq:H1bound} that
\begin{eqnarray*}
\Vert \phi - P_h \phi \Vert_{L^2(D)} 
& \lesssim &
\Vert \phi \Vert_{H^1(D)} \; h 
\lesssim 
\frac{1}{a_{\min}(\omega)} \Vert P_h u
-u_h \Vert_{L^2(D)} \; h \,.
\end{eqnarray*}
Using these bounds together with Theorem~\ref{cor:mfe5} (applied to $\vq$) in \eqref{dualbound} and 
dividing the result by $\Vert P_h u -u_h \Vert_{L^2(D)}$ we obtain the final bound in \eqref{eq:dualbound}.

\end{proof}

Using \eqref{lem:L2approx} and Lemma~\ref{th:mfe6} we can now establish
the improved convergence order for the pressure error for $0<t<1$.

\begin{theorem}
\label{th:super-p0}
Let the assumptions of Theorem \ref{prop:ChScTe} hold. Then, for all \ $0<s<\min(t,\frac{\pi}{\theta_{\max}})<1$, 
\begin{equation}
\label{ubound}
\Vert u_\omega - u_{h,\omega}  \Vert_{L^2(D)} \; \lesssim \; C_u(\omega)\; h^{\min(2s,1)}
\end{equation}
where $C_u$ is defined in \eqref{eq:dualbound}.
Moreover, $\Vert u - u_h  \Vert_{L^p(\Omega, L^2(D))} \lesssim h^{\min(2s,1)}$, for all $p < r$.
\end{theorem}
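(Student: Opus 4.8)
The plan is to reduce \eqref{ubound} to the bound on $\Vert P_h u_\omega - u_{h,\omega}\Vert_{L^2(D)}$ already obtained in Lemma~\ref{th:mfe6}, by splitting the error with the $L^2$-orthogonal projection and the triangle inequality,
\[
\Vert u_\omega - u_{h,\omega}\Vert_{L^2(D)} \;\le\; \Vert u_\omega - P_h u_\omega\Vert_{L^2(D)} \;+\; \Vert P_h u_\omega - u_{h,\omega}\Vert_{L^2(D)} .
\]
The second term is $\lesssim C_u(\omega)\,h^{2s}$ by Lemma~\ref{th:mfe6}, so the only thing left to estimate is the projection error $\Vert u_\omega - P_h u_\omega\Vert_{L^2(D)}$.

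For that I would invoke the classical estimate \eqref{lem:L2approx}, which is applicable since $u_\omega \in H^1(D)$ by Lemma~\ref{equivalence}, giving $\Vert u_\omega - P_h u_\omega\Vert_{L^2(D)} \lesssim h\,\Vert u_\omega\Vert_{H^1(D)}$, and then absorb the realisation-dependent factor $\Vert u_\omega\Vert_{H^1(D)}$ into $C_u(\omega)$. This last step is the only place that needs a (mild) check: comparing \eqref{eq:H1bound} with the definition of $C_u(\omega)$ in \eqref{eq:dualbound}, and using $\Vert f_\omega\Vert_{H^{-1}(D)} \le \Vert f_\omega\Vert_{H^t(D)}$, $\Vert \vec{g}_\omega\Vert_{L^2(D)} \le \Vert \vec{g}_\omega\Vert_{H^t(D)}$, $|u_\Gamma|_{H^{1/2}(\partial D)} \lesssim \Vert u_\Gamma\Vert_{H^{1/2+s}(\partial D)}$, together with the elementary observation that the various powers of $\amax(\omega)$, $1/\amin(\omega)$ and $\Vert a_\omega\Vert_{C^t(\overline{D})}$ that appear in $C_u(\omega)$ multiply to a quantity bounded below by an absolute constant (each pairing of $\amax$ with $1/\amin$, and of $\Vert a_\omega\Vert_{C^t(\overline{D})}$ with $1/\amin$, is $\ge 1$), one concludes $\Vert u_\omega\Vert_{H^1(D)} \lesssim C_u(\omega)$.

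Combining the two estimates gives $\Vert u_\omega - u_{h,\omega}\Vert_{L^2(D)} \lesssim C_u(\omega)\,(h + h^{2s})$, and since $h$ may be assumed bounded both $h$ and $h^{2s}$ are $\lesssim h^{\min(2s,1)}$, which yields \eqref{ubound}. The $L^p(\Omega)$ statement then follows by taking $L^p(\Omega)$-norms in \eqref{ubound} and noting that $C_u \in L^p(\Omega)$ for every $p<r$; this is H\"older's inequality together with A1--A3, exactly as in the discussion preceding Corollary~\ref{cor:velocity} (the powers of $\amax$, $1/\amin$, $\Vert a_\omega\Vert_{C^t(\overline{D})}$ lie in every $L^q(\Omega)$, $q<\infty$, while $C_q$ and $\Vert f_\omega\Vert_{H^t(D)}$ lie in $L^p(\Omega)$ for $p<r$). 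The only genuine work is the constant comparison in the second paragraph; the remainder is a one-line triangle inequality plus the standard integrability argument already carried out for the velocity in Corollary~\ref{cor:velocity}.
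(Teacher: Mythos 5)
Your proposal is correct and takes essentially the same route as the paper: split the error at $P_h u_\omega$, bound the projection term via \eqref{lem:L2approx} and the remainder via Lemma~\ref{th:mfe6}, then use H\"older's inequality and A1--A3 for the moment bound. The only cosmetic difference is that you dominate $\Vert u_\omega\Vert_{H^1(D)}$ by $C_u(\omega)$ through the stability bound \eqref{eq:H1bound}, whereas the paper uses $\Vert u_\omega\Vert_{H^1(D)}\le\Vert u_\omega\Vert_{H^{1+s}(D)}\lesssim C_{\mathrm{reg}}(\omega)\le C_u(\omega)$ from Theorem~\ref{prop:ChScTe}; both absorptions are valid.
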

\begin{proof}
It follows from \eqref{lem:L2approx} applied to $w = u_\omega$ and from Theorem \ref{prop:ChScTe}
that 
\[
\Vert u_\omega - P_h u_\omega \Vert_{L^2(D)} 
\lesssim 
\Vert u_\omega \Vert_{H^1(D)}\; h
\leq
\Vert u_\omega \Vert_{H^{1+s}(D)}\; h
\lesssim
C_\text{reg}(\omega)\; h
\le
C_u(\omega)\; h
\]
with $C_u(\omega)$ as defined in \eqref{eq:dualbound}.
Combining this bound with Lemma~\ref{th:mfe6} the bound \eqref{ubound} follows immediately via the triangle inequality.
Due to Assumptions A1--A3 the bound on the moments is then again a consequence of H\"older's inequality.
\end{proof}

\subsection{Linear velocity functionals}\label{sec:linfluxfunc}

Let $\mathcal{M}:\cV \to \mathbb{R}$ be a continuous linear functional of the Darcy
velocity $\vq$. An important example is the effective permeability, which 
in a rectangular flow cell reduces simply to 
\begin{equation}
\label{keff-func}
\mathcal{M}(\vq) = \frac{1}{|D|} \int_D q_1 \dx
\end{equation}
(see Section~\ref{sec:numerics} for details).
Another example is the average normal flux through some part of the 
boundary. Our goal in this section is to estimate the FE approximation error 
$|\mathcal{M}(\vq)-\mathcal{M}(\vq_h)|$.

Let $\omega \in \Omega$ be again fixed.
We will again use a duality argument and so we introduce the following auxiliary problem: 
Find $(\vz_\omega,\phi_\omega) \in \cV \times \cW$, s.t.
\begin{equation}
\left.
\begin{array}{rcll}
\hspace*{1.5cm} m_\omega(\vv,\vz_\omega) & + \ \ b(\vv,\phi_\omega)
& = & \mathcal{M}(\vv)\ ,
\; \; \mathrm{for\ all\ } \vv  \in \cV, \\[1ex]
b(\vz_\omega,w)&  & = & 0 \ , \; \; \; \; \; \;\;\;\;
\mathrm{for\ all\ }
w \in \cW\ .
\end{array}
\right\}
\label{standard-dual-flux}
\end{equation}
We consider a specific class of linear functionals $\mathcal{M} \in \cV'$.
Let $\vv \in \Hdiv$, then by the Riesz Representation Theorem
\[
\mathcal{M}(\vv) = \int_D \vec{\psi}\cdot \vv \dx + \int_D \rdiv \vec{\psi} \, \rdiv \vv \dx 
\]
for some $\vec{\psi} \in \Hdiv$.
\begin{description}
\item[A4.] We assume that $\mathcal{M} \in \cV'$ has a Riesz
  representor $\vec{\psi} \in H^t(D)^d$ with $\rdiv \vec{\psi} \in H^{1+t}(D)$, for some $0<t\leq 1$.
\end{description}
Note that under this assumption we have, due to Green's formula, 
\[
\int_D \rdiv \vec{\psi} \, \rdiv \vv \dx
=
\int_{\partial D} \vv \cdot \vec{\nu} \,\rdiv \vec{\psi}ds - \int_D \vv \cdot \nabla (\rdiv \vec{\psi}) \dx
\]
and hence we can write 
\begin{equation}
\label{linfunc}
  \mathcal{M}(\vv) = \int_D \vv \cdot (\vec{\psi}-\nabla(\rdiv \vec{\psi}) ) \dx + \int_{\partial D} \vv \cdot
\vec{\nu} \, \rdiv \vec{\psi} \, ds \ .
\end{equation}
For the functional in \eqref{keff-func}, i.e. the effective permeability in a rectangular flow cell, the Riesz
representor is $\vec{\psi} = (1/|D|,0)^\top$. 
For this choice of $\vec{\psi}$ Assumption A4 is clearly satisfied.

We now have the following result.
\begin{theorem}\label{th:mfe7}
Let Assumption A4 and the assumptions of Theorem \ref{prop:ChScTe} hold. Then,
\[
\|\mathcal{M}(\vq)-\mathcal{M}(\vq_{h})\|_{L^p(\Omega)} \lesssim h^{2s},
\]
for all $0<s<\min(t,\frac{\pi}{\theta_{\max}})<1$ and all $p < r$.
\end{theorem}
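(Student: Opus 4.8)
The plan is to run an Aubin--Nitsche-type duality argument on the auxiliary problem \eqref{standard-dual-flux}. The first step is to recognise that, since $m_\omega$ is symmetric, \eqref{standard-dual-flux} is itself an instance of the mixed problem \eqref{3.3}: comparing the representation \eqref{linfunc} of $\mathcal{M}$ with the definition of $G_\omega$, the dual problem carries the data $f_{dual}=0$, $\vec{g}_{dual}=\vec{\psi}-\nabla(\rdiv\vec{\psi})$ and Dirichlet datum $u_{\Gamma,dual}=-\rdiv\vec{\psi}|_{\partial D}$. Under Assumption A4 we have $\vec{\psi}\in H^t(D)^d$ and $\rdiv\vec{\psi}\in H^{1+t}(D)$, hence $\vec{g}_{dual}\in H^t(D)^d$ and, by the trace theorem, $u_{\Gamma,dual}\in H^{1/2+t}(\partial D)$; since these data are deterministic, Assumption A3 holds for the dual problem. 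Consequently Lemma~\ref{equivalence}, Theorem~\ref{prop:ChScTe} and Theorem~\ref{cor:mfe5} all apply to $(\vz_\omega,\phi_\omega)$ and to its mixed finite element approximation $(\vz_{h,\omega},\phi_{h,\omega})$. In particular $\phi_\omega\in H^1(D)$ with $\|\phi_\omega\|_{H^1(D)}$ controlled via \eqref{eq:H1bound} by $\tfrac{a_{\max}(\omega)}{a_{\min}(\omega)}$ times deterministic norms of $\vec{\psi}$, and $\|\vz_\omega-\vz_{h,\omega}\|_{L^2(D)}\lesssim C_q^{\mathrm{dual}}(\omega)\,h^{s}$, where $C_q^{\mathrm{dual}}(\omega)$ is the constant \eqref{eq:mfe5b} evaluated with the dual data, i.e. a product of powers of $a_{\max}(\omega)$, $1/a_{\min}(\omega)$, $\|a_\omega\|_{C^t(\overline{D})}$ and fixed constants.

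The second step is the error representation. Testing the first equation of \eqref{standard-dual-flux} with $\vv=\vq_\omega-\vq_{h,\omega}\in\cV$ gives
\[
\mathcal{M}(\vq_\omega)-\mathcal{M}(\vq_{h,\omega})=m_\omega(\vq_\omega-\vq_{h,\omega},\vz_\omega)+b(\vq_\omega-\vq_{h,\omega},\phi_\omega).
\]
Two Galerkin orthogonalities then simplify the right-hand side. Subtracting the second equations of \eqref{3.3} and \eqref{3.4} yields $b(\vq_\omega-\vq_{h,\omega},w_h)=0$ for all $w_h\in\cW_h$, so the $b$-term equals $b(\vq_\omega-\vq_{h,\omega},\phi_\omega-P_h\phi_\omega)$. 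Moreover, the discrete version of the second equation of \eqref{standard-dual-flux} together with the commuting-diagram property $\rdiv\cV_h=\cW_h$ forces $\rdiv\vz_{h,\omega}=0$, i.e. $\vz_{h,\omega}\in Z_h$; subtracting the first equations of \eqref{3.3} and \eqref{3.4} tested with $\vz_{h,\omega}$ and using $b(\vz_{h,\omega},\cdot)\equiv 0$ gives $m_\omega(\vq_\omega-\vq_{h,\omega},\vz_{h,\omega})=0$, so the $m_\omega$-term equals $m_\omega(\vq_\omega-\vq_{h,\omega},\vz_\omega-\vz_{h,\omega})$. Hence
\[
\mathcal{M}(\vq_\omega)-\mathcal{M}(\vq_{h,\omega})=m_\omega(\vq_\omega-\vq_{h,\omega},\vz_\omega-\vz_{h,\omega})+b(\vq_\omega-\vq_{h,\omega},\phi_\omega-P_h\phi_\omega).
\]

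The final step is to bound the two terms by Cauchy--Schwarz and \eqref{bounds}. The first term is $\le a_{\min}(\omega)^{-1}\|\vq_\omega-\vq_{h,\omega}\|_{L^2(D)}\,\|\vz_\omega-\vz_{h,\omega}\|_{L^2(D)}\lesssim a_{\min}(\omega)^{-1}C_q(\omega)\,C_q^{\mathrm{dual}}(\omega)\,h^{2s}$ by Theorem~\ref{cor:mfe5} applied to both the primal and the dual problem. The second term is $\le\|\rdiv(\vq_\omega-\vq_{h,\omega})\|_{L^2(D)}\,\|\phi_\omega-P_h\phi_\omega\|_{L^2(D)}$; here $\rdiv(\vq_\omega-\vq_{h,\omega})=(I-P_h)f_\omega$, so this factor is $\lesssim h^{s}\|f_\omega\|_{H^{t}(D)}$ by standard fractional-order approximation, while $\|\phi_\omega-P_h\phi_\omega\|_{L^2(D)}\lesssim h\,\|\phi_\omega\|_{H^1(D)}$ by \eqref{lem:L2approx} and \eqref{eq:H1bound}; since $s<1$ this gives a contribution of order $h^{1+s}$ that is dominated by $h^{2s}$. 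Collecting the pathwise bounds yields $|\mathcal{M}(\vq_\omega)-\mathcal{M}(\vq_{h,\omega})|\lesssim \widetilde{C}(\omega)\,h^{2s}$, where $\widetilde{C}(\omega)$ is a product of $C_q(\omega)$, $C_q^{\mathrm{dual}}(\omega)$, $\|f_\omega\|_{H^t(D)}$ and powers of $a_{\max}(\omega)$, $1/a_{\min}(\omega)$, $\|a_\omega\|_{C^t(\overline{D})}$; by Assumptions A1--A3 and H\"older's inequality each factor lies in the appropriate $L^p(\Omega)$, so $\widetilde{C}\in L^p(\Omega)$ for all $p<r$, whence $\|\mathcal{M}(\vq)-\mathcal{M}(\vq_h)\|_{L^p(\Omega)}\lesssim h^{2s}$ for $p<r$. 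The step requiring the most care is the identification of \eqref{standard-dual-flux} as a member of the family \eqref{3.3} whose data meets Assumption A3 under A4 — this is precisely what licenses invoking the pathwise regularity and error estimates of Section~\ref{sec:reg} and Section~\ref{sec:error} for the dual solution — together with checking the two Galerkin orthogonalities, in particular that $\vz_{h,\omega}\in Z_h$.
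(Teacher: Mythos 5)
Your proposal is correct and follows essentially the same route as the paper: the same dual problem \eqref{standard-dual-flux}, the same two Galerkin orthogonalities ($\rdiv\vz_h\equiv 0$ giving $m_\omega(\vq-\vq_h,\vz_h)=0$, and $b(\vq-\vq_h,P_h\phi)=0$), the same Cauchy--Schwarz estimate, and the same identification of the dual data as an instance of \eqref{3.3} satisfying A3 under A4 so that Theorem~\ref{cor:mfe5} and \eqref{eq:H1bound} apply, finishing with H\"older's inequality for the moments. The only cosmetic difference is that you bound $\Vert\rdiv(\vq-\vq_h)\Vert_{L^2(D)}$ via $\rdiv(\vq-\vq_h)=(I-P_h)f_\omega$ rather than quoting the $\Hdiv$ estimate of Theorem~\ref{cor:mfe5}, which is equivalent.
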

\begin{proof}
Omitting again the dependence on $\omega \in \Omega$, let 
$(\vz,\phi) \in \cV \times \cW$ be the solution to the dual mixed
problem \eqref{standard-dual-flux}, and let $(\vz_h,\phi_h) \in \cV_h
\times \cW_h$ be the corresponding mixed FE solution.

Note first that the second equation in the discrete version of \eqref{standard-dual-flux} 
implies $\rdiv \vz_h \equiv 0$ on all of $D$. Thus, by subtracting the 
first equation in \eqref{3.3} from that
in \eqref{3.4} with test functions $\vv=\vv_h=\vz_h$, 
respectively, this also implies that
\begin{equation}
\label{velfunc_aux}
m_\omega(\vq-\vq_h,\vz_h) 
=
-b(\vz_h,u-u_h)
= 0 \, .
\end{equation}
Now, choosing $\vv=\vq-\vq_h$ in the first equation of 
\eqref{standard-dual-flux} and using \eqref{velfunc_aux}, as well as 
the bilinearity of $m_\omega$, we obtain
\begin{equation*}
\mathcal{M}(\vq-\vq_h)
=
m_\omega(\vq-\vq_h,\vz) + b(\vq-\vq_h,\phi)
=
m_\omega(\vq-\vq_h,\vz-\vz_h) +  b(\vq-\vq_h,\phi - P_h \phi) \, .
\end{equation*}
In the last step  we used (as in the proof of Lemma~\ref{th:mfe6})
that $b(\vq-\vq_h,P_h \phi)=0$. Thus, by the Cauchy-Schwarz inequality we finally get
\begin{equation}
\label{funcbound}
|\mathcal{M}(\vq-\vq_h)| \;\le \;
\frac{1}{a_{\min}(\omega)} \Vert\vq-\vq_h\Vert_{L^2(D)}
\Vert\vz-\vz_h\Vert_{L^2(D)} + \Vert\rdiv(\vq-\vq_h)\Vert_{L^2(D)}
\Vert \phi-P_h\phi\Vert_{L^2(D)} \,.
\end{equation}
Note that \eqref{standard-dual-flux} is a special case of \eqref{3.3} with data
$\vec{g} = \vec{\psi}-\nabla(\rdiv \vec{\psi}) \in H^t(D)^d$ (thanks to Assumption A4), $f \equiv 0$ and $u_\Gamma
=-\rdiv \vec{\psi} \in H^{1/2+t}(D)$ (due to the standard Trace Theorem).
It follows again as in the proof of Lemma \ref{th:mfe6}, by applying \eqref{lem:L2approx} and Lemma~\ref{equivalence},
and Theorem \ref{cor:mfe5} to the dual problem \eqref{standard-dual-flux} that
\begin{eqnarray*}
\|\vz - \vz_h\|_{L^2(D)} & \lesssim & \frac{[\max(a_{\max}(\omega), \Vert a_\omega
\Vert_{C^t(\overline{D})})]^6}{a^5_{\min}(\omega)} \left( \|\vec{\psi}-\nabla(\rdiv \vec{\psi})\|_{H^s(D)} +
\|\rdiv \vec{\psi}\|_{H^{1/2+s}(\partial D)} \right) h^s \,, \\ 
\Vert \phi - P_h \phi \Vert_{L^2(D)} & \lesssim &
\left(\frac{a_{\max}(\omega)}{a_{\min}(\omega)} \Vert \vec{\psi}-\nabla \rdiv \vec{\psi} \Vert_{L^2(D)}
+ | \rdiv \vec{\psi} |_{H^{1/2}(\partial D)} \right) h \,.
\end{eqnarray*}
Substituting these two bounds in \eqref{funcbound} and using Theorem~\ref{cor:mfe5} we obtain 
\[
|\mathcal{M}(\vq_\omega-\vq_{h,\omega})| \;\lesssim\; 
C_{\mathcal{M}}(\omega) 
\left(\Vert\vec{\psi} - \nabla (\rdiv \vec{\psi}) \Vert_{H^t(D)} + \Vert \rdiv \vec{\psi} \Vert_{H^{1/2+t}(\partial
D)}\right) h^{2s}\,,
\]
for some constant $C_{\mathcal{M}}(\omega)$, that (as a function of $\omega$) is a random variable 
$C_{\mathcal{M}} \in L^q(\Omega)$, for all $q<r$, due to Assumptions A1--A3 and H\"older's 
inequality. The result then follows.
\end{proof}

\begin{remark}
The treatment of pressure functionals $\mathcal{L}=
\mathcal{L}(u)$ is similar. The dual problem in that case has
$G_\omega \equiv 0$
and $F_\omega \equiv \mathcal{L}$. 
For ideas on how to generalise to Fr\'{e}chet differentiable, nonlinear functionals see
\cite[\S~3.2]{TSGU:2012}, where this is explained in the standard finite element case. 
\end{remark}

\subsection{Hybridisation and recovered pressure approximation}\label{sec:err:P1}
The Galerkin matrix associated with the mixed formulation \eqref{3.4} is indefinite.
This is a potential problem for both direct and iterative solvers and was considered a major drawback of mixed methods.
\textit{Hybridisation} overcomes this problem by introducing (additional) Lagrange multipliers.
After block elimination of pressures and velocities, the system for the multipliers is symmetric positive definite and much smaller than the saddle point
system associated with \eqref{3.4}.
It can be solved efficiently by multigrid methods (see, e.g. \cite{Chen:1996}).
Crucially, the mixed velocity and pressure approximation can then be obtained by local post-processing.

For lowest order Raviart-Thomas elements the additional Lagrange multipliers are piecewise constant along interelement
boundaries in the finite element triangulation.
They are used in conjunction with the space $RT_{-1}(\cT_h)$ of discontinuous, piecewise $RT_0$
functions (w.r.t. to $\cT_h$) for the velocity approximation defined in \eqref{RT-1}, to enforce the continuity of the
normal component of the velocities across interelement boundaries in a weak sense.
This approach was originally proposed in \cite{FDV:1965} as an efficient implementation technique for mixed methods in
linear elasticity.
However, in \cite{ArnoldBrezzi:1985} it was proved that the Lagrange multipliers contain extra information and can be used to construct a more accurate piecewise linear recovered pressure approximation. In the remainder of this section, we analyse the FE error of the recovered pressure approximation. This is a novel application of this method in the case of random coefficients. In addition, the diffusion coefficient has limited regularity which requires working in non-integer order spaces.

More recently, the Lagrange multipliers have been characterised as the solution of a variational problem in which
velocity and pressure do not appear, leading to a new approach to the error analysis of hybridised mixed methods which
gives error estimates for the Lagrange multipliers without using error estimates for the other variables
\cite{Chen:1996,CockburnGopalakrishnan:2005}.
In our analysis here, however, we follow the traditional approach in \cite{ArnoldBrezzi:1985}.
We mention that in \cite{ArnoldBrezzi:1985} only the case $D \subset \mathbb{R}^2$ and Raviart-Thomas elements of even
orders $k$ have been described in detail, but the construction and the analysis extend also to three space dimensions
for lowest order elements ($k=0$). 
The key is again to use the results in \cite{ArnoldBrezzi:1985} for a fixed realisation $\omega \in \Omega$, keeping
track of the precise dependence of the constants on $\omega$. 

Let $\omega \in \Omega$ be fixed and let $M_{-1}(\cE)$ denote the
space of all piecewise constant functions on the skeleton $\cE = \cE_I
\cup \cE_D$. 
The space of all functions in $M_{-1}(\cE)$
that vanish on boundary faces $E \in \cE_D$ is denoted $M_{-1}(\cE_I)$. 
We modify the standard Raviart-Thomas mixed Galerkin equations \eqref{3.4} by using velocity functions $\underline{\vq}_{h,\omega} \in RT_{-1}(\cT_h)$ together with multipliers $\lambda_{h,\omega} \in M_{-1}(\cE_I)$ and pressure functions $\underline{u}_{h,\omega} \in \cW_h$. 
In addition we introduce the bilinear form $b_T(\vv,w):=- (\rdiv \vv, w)_{L^2(T)}$.
We then seek $(\underline{\vq}_{h,\omega},\underline{u}_{h,\omega},\lambda_{h,\omega}) \in RT_{-1}(\cT_h) \times \cW_h \times M_{-1}(\cE_I)$ such that
\begin{equation}
\left.
\hspace*{-2cm}
\begin{array}{rlll}
\hspace*{1.45cm}m_\omega(\underline{\vq}_{h,\omega},\vv_h) +  \sum\limits_{T \in \cT_h} b_T(\vv_h,\underline{u}_{h,\omega}) 
+ \sum\limits_{E \in \cE_I} (\lambda_{h,\omega}, \vv_h \cdot
\vec{\nu}_E)_{L^2(E)} \!\!
& = & \! G_\omega(\vv_h) & \forall \, \vv_h  \in RT_{-1}(\cT_h), \\[1ex]
\sum\limits_{T \in \cT_h} b_T(\underline{\vq}_{h,\omega},w_h)  \!\! & = & \! F_\omega(\vv_h) 
& \forall \,
w_h \in \cW_h, \\[1ex]
\sum\limits_{E \in \cE_I} (\mu_h, \underline{\vq}_{h,\omega} \cdot \vec{\nu}_E)_{L^2(E)}  \!\! & = & \! 0
& \forall \, \mu_h \in M_{-1}(\cE_I) \ .
\end{array}
\right\} \hspace{-0.1cm}
\label{P1-system}
\end{equation}

By construction it is clear that the hybridised mixed system \eqref{P1-system} has a unique solution $(\underline{\vq}_{h,\omega},\underline{u}_{h,\omega},\lambda_{h,\omega})$. 
Moreover, we have $\underline{\vq}_{h,\omega} = \vq_{h,\omega}$ and $\underline{u}_{h,\omega} = u_{h,\omega}$ where
$(\vq_{h,\omega},u_{h,\omega})$ denotes the solution of the standard Raviart-Thomas mixed Galerkin equations
\eqref{3.4}. 
Hence our analysis of the mixed formulation in Sections~\ref{sec:error:flux}--\ref{sec:linfluxfunc}
carries over to the hybridised formulation and we drop the lower bars in \eqref{P1-system} in the rest of this section.
It only remains to analyse the convergence of the piecewise linear recovered pressure approximation, {which we define below.}

Before defining the {recovered} approximation, we derive bounds for the difference between 
$\lambda_{h,\omega}$ and the trace of $u_\omega$ on the interior 
element boundaries $\cE_I$. 
To this end we define the following {inner product
\begin{equation}
\label{ip:M-1}
( \mu_h, \rho_h)_{0,h} 
:=
\sum_{E \in \cE} (\mu_h,\rho_h)_{L^2(E)} 
\end{equation}
and the corresponding broken norm $\vert \mu_h\vert_{0,h}:=(\mu_h,\mu_h)_{0,h}^{1/2}$ on $M_{-1}(\cE)$.}
Since $u_\omega \in H_0^{1}(D)$, we know that $u_\omega \vert_{\cE} \in L^2(\cE)$ and we denote by
$P_h^\cE$ the orthogonal projection from $L^2(\cE)$ onto $M_{-1}(\cE)$ with respect to the inner product \eqref{ip:M-1}. 
We then have the following approximation result.
\begin{lemma}
\label{lem:M-1}
For every element $T \in \cT_h$ and every edge $E$ of $T$
\begin{equation}
\Vert \lambda_{h,\omega} - P_h^\cE u_\omega \Vert_{L^2(E)} \lesssim \frac{1}{a_{\min}(\omega)} h_T^{1/2} \Vert \vq_\omega - \vq_{h,\omega} \Vert_{L^2(T)}
+ h_T^{-1/2} \Vert u_{h,\omega} - P_h u_\omega \Vert_{L^2(T)} \, ,
\end{equation}
where $h_T:= \diam(T)$ and $P_h$ is the $L^2$-orthogonal projection from $\cW$ to $\cW_h$ defined in Section~\ref{sec:error:press}.
\end{lemma}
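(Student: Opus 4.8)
The plan is to prove the bound edge-by-edge and element-by-element, following the classical hybridisation argument of \cite{ArnoldBrezzi:1985} but tracking the dependence of every constant on $a_{\min}(\omega)$ and on $h_T$. Fix $\omega\in\Omega$, an element $T\in\cT_h$ and an interior edge $E\in\cE_I$ of $T$ (for $E\in\cE_D$ there is nothing to prove, with the natural convention that $\lambda_{h,\omega}$ agrees with the boundary data there). The key point is that the edge value $(\lambda_{h,\omega}-P_h^\cE u_\omega)|_E$ can be extracted from the first equation of \eqref{P1-system} by testing it with a single, suitably scaled $RT_0(T)$ basis function supported on $T$, once this equation is confronted with a purely local identity satisfied by the exact pair $(\vq_\omega,u_\omega)$.

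First I would record the exact local identity. By Lemma~\ref{equivalence}, Darcy's law $a_\omega^{-1}\vq_\omega+\nabla u_\omega=\vec g_\omega$ holds as an identity in $L^2(D)^d$, and $u_\omega\in H^1(D)$, hence $u_\omega|_T\in H^1(T)$ has a trace on $\partial T$. Multiplying Darcy's law by $\vv_h\in RT_0(T)$, integrating over $T$ and integrating by parts gives
\[
m_\omega^T(\vq_\omega,\vv_h)+b_T(\vv_h,u_\omega)+\int_{\partial T}u_\omega\,\vv_h\cdot\vec{n}_T\,ds=(\vec g_\omega,\vv_h)_{L^2(T)},
\]
where $m_\omega^T$ is the restriction of $m_\omega$ to $T$ and $\vec{n}_T$ the outward unit normal. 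Since $\vv_h$ extended by zero lies in $RT_{-1}(\cT_h)$, the first equation of \eqref{P1-system} (in which the interface term, for such an element-supported test function, reduces to $\sum_{E'\subset\partial T,\,E'\in\cE_I}(\lambda_{h,\omega},\vv_h\cdot\vec{n}_T)_{L^2(E')}$) gives the corresponding relation for $(\vq_{h,\omega},u_{h,\omega})$, with $\lambda_{h,\omega}$ on the interior faces of $T$ and with $u_\Gamma$ on the boundary faces of $T$. Subtracting, and using $u_\omega=u_\Gamma$ on $\partial D$ to cancel the boundary-face terms, produces the local error equation
\[
m_\omega^T(\vq_\omega-\vq_{h,\omega},\vv_h)+b_T(\vv_h,u_\omega-u_{h,\omega})+\sum_{E'\subset\partial T,\,E'\in\cE_I}(u_\omega-\lambda_{h,\omega},\vv_h\cdot\vec{n}_T)_{L^2(E')}=0,
\qquad\forall\,\vv_h\in RT_0(T).
\]

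Next I would take $\vv_h=\vv_{T,E}$, the $RT_0(T)$ basis function (extended by zero) characterised by $\vv_{T,E}\cdot\vec{n}_T|_{E'}=\delta_{EE'}$ on the faces $E'$ of $T$, so that only the $E$-term survives in the edge sum. Two simplifications then apply: since $\rdiv\vv_{T,E}$ and $u_{h,\omega}$ are constant on $T$, the defining property of $P_h$ gives $b_T(\vv_{T,E},u_\omega-u_{h,\omega})=b_T(\vv_{T,E},P_h u_\omega-u_{h,\omega})$; and since $\vv_{T,E}\cdot\vec{n}_T$ and $\lambda_{h,\omega}$ are constant on $E$, the defining property of $P_h^\cE$ gives $(u_\omega-\lambda_{h,\omega},\vv_{T,E}\cdot\vec{n}_T)_{L^2(E)}=|E|\,(P_h^\cE u_\omega-\lambda_{h,\omega})|_E$. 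Solving the error equation for $(\lambda_{h,\omega}-P_h^\cE u_\omega)|_E$, using Cauchy--Schwarz with $|m_\omega^T(\vw,\vv)|\le a_{\min}(\omega)^{-1}\Vert\vw\Vert_{L^2(T)}\Vert\vv\Vert_{L^2(T)}$ and $|b_T(\vv,w)|\le\Vert\rdiv\vv\Vert_{L^2(T)}\Vert w\Vert_{L^2(T)}$, and multiplying by $|E|^{1/2}$, I obtain
\[
\Vert\lambda_{h,\omega}-P_h^\cE u_\omega\Vert_{L^2(E)}\lesssim |E|^{-1/2}\Big(\tfrac{1}{a_{\min}(\omega)}\Vert\vq_\omega-\vq_{h,\omega}\Vert_{L^2(T)}\Vert\vv_{T,E}\Vert_{L^2(T)}+\Vert\rdiv\vv_{T,E}\Vert_{L^2(T)}\Vert u_{h,\omega}-P_h u_\omega\Vert_{L^2(T)}\Big).
\]
Inserting the standard scaling estimates $\Vert\vv_{T,E}\Vert_{L^2(T)}\lesssim h_T^{d/2}$ and $\Vert\rdiv\vv_{T,E}\Vert_{L^2(T)}\lesssim h_T^{d/2-1}$ (from the Piola transform and shape-regularity of $\cT_h$) together with $|E|^{-1/2}\sim h_T^{-(d-1)/2}$, the powers of $h_T$ collapse to $h_T^{1/2}$ and $h_T^{-1/2}$, giving the claim independently of $d$.

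I expect the routine parts to be the Cauchy--Schwarz step and the basis-function scaling. The part demanding genuine care --- and the main obstacle --- is the clean derivation of the local error equation: one must justify testing the \emph{discontinuous} space $RT_{-1}(\cT_h)$ element by element, invoke Lemma~\ref{equivalence} so that Darcy's law and the trace of $u_\omega$ on $\partial T$ are meaningful, and reconcile the interface term of \eqref{P1-system} with the boundary integral coming from integration by parts on $T$ (so that, in particular, the boundary-face contributions cancel exactly). Everything after that is bookkeeping of powers of $h_T$ and $a_{\min}(\omega)$.
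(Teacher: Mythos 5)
Your proposal is correct and follows essentially the same route as the paper's proof, i.e.\ the classical Arnold--Brezzi argument: a local Green's formula for the exact Darcy law, subtraction of the hybridised equation tested with an element-supported $RT_0(T)$ function whose normal trace is nonzero only on $E$, the orthogonality properties of $P_h$ and $P_h^\cE$, and an $RT_0$ scaling estimate, with the constant tracked as $a_{\min}(\omega)^{-1}$. The only cosmetic difference is that you test with the unit-normal basis function $\vv_{T,E}$ and solve for the constant edge value (multiplying by $|E|^{1/2}$ at the end), whereas the paper tests with the scaled function $\underline{\vec{\delta}}_h$ whose normal trace on $E$ equals $\lambda_{h,\omega}-P_h^\cE u_\omega$ and then divides by $\Vert \lambda_{h,\omega}-P_h^\cE u_\omega\Vert_{L^2(E)}$ --- these are the same argument.
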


\begin{proof}
We follow the proof of \cite[Theorem 1.4]{ArnoldBrezzi:1985} and extend it to {the stochastic case and to} $d=3$ space dimensions.
Consider an element $T \in \cT_h$ and a face $E \subset \partial T$.
Let $(\vq_{h,\omega},u_{h,\omega},\lambda_{h,\omega})$ denote the solution of \eqref{P1-system} and $(\vq_\omega,u_\omega)$ the solution of \eqref{3.3}, respectively. We omit the dependence on $\omega$ in the proof.

Since a function $\vv_h \in RT_0(T)$ is uniquely determined by the (constant) value $\vv_h \cdot \vec{\nu}_E$ on $E_i \subset \partial T$, $i=1,\dots,d+1$, it is clear that there exists a unique $\underline{\vec{\delta}}_h \in RT_0(T)$ such that
\begin{equation}
\left.
\begin{array}{rcll}
\underline{\vec{\delta}}_h \cdot \vec{\nu}_E &=& \lambda_h - P_h^\cE u& \text{on } E,\\
\underline{\vec{\delta}}_h \cdot \vec{\nu}_{E'} &=& 0 &\text{on } E' \in \partial T \setminus E \ .
\end{array} 
\right\}
\label{testf}
\end{equation}  
Moreover, since $\underline{\vec{\delta}}_h \in RT_0(T)$, we have
\begin{equation}
\label{scaling}
\Vert \rdiv \underline{\vec{\delta}}_h\Vert_{L^2(T)} \;+ \;h_T^{-1} \Vert 
\underline{\vec{\delta}}_h\Vert_{L^2(T)} \;\lesssim\; 
h_T^{-\frac{1}{2}}\Vert \lambda_h - P_h^\cE u \Vert_{L^2(E)} \, .
\end{equation}
An elementary calculation shows that this bound holds on the reference element $\widehat{T}$. 
A simple scaling argument then gives \eqref{scaling}.
As in the proof of Lemma \ref{equivalence}, 
a local application of Green's formula gives
\[
\int_T (\vec{g}-a^{-1}\vq) \cdot \underline{\vec{\delta}}_h \dx + \int_T u \, \rdiv \underline{\vec{\delta}}_h \dx = \int_E u(\lambda_h - P_h^\cE u) \, \text{d}E \, ,
\]
which, together with the definitions of the projections $P_h$ and $P_h^\cE$, leads to
\begin{equation}
\label{e2}
\int_T a^{-1} \vq \cdot \underline{\vec{\delta}}_h \dx - \int_T P_h u \, \rdiv \underline{\vec{\delta}}_h \dx + \int_E P_h^\cE u(\lambda_h - P_h^\cE u) \, \text{d}E = \int_T \vec{g} \cdot \underline{\vec{\delta}}_h \dx
\end{equation}
Now, subtracting this from the first equation of \eqref{P1-system} with test function
$$
\vv_h = \underline{\vec{\delta}}_h \text{ in } T \quad \text{and} \quad \vv_h = \vec{0} \text{ in } \Omega \setminus T,
$$
from \eqref{e2} and using the scaling argument \eqref{scaling}, we get
\begin{equation*}
\begin{split}
\Vert \lambda_h - P_h^\cE u \Vert_{L^2(E)}^2 
&=
\int_T a^{-1} (\vq - \vq_h) \cdot \underline{\vec{\delta}}_h  \dx - \int_T (P_h u - u_h) \rdiv \underline{\vec{\delta}}_h \dx \\
&\leq
\frac{1}{a_{\min}(\omega)} \Vert \vq - \vq_h \Vert_{L^2(T)} \Vert \underline{\vec{\delta}}_h \Vert_{L^2(T)}
+
\Vert u_h-P_h u\Vert_{L^2(T)} \Vert \rdiv \underline{\vec{\delta}}_h\Vert_{L^2(T)}\\
&\lesssim
\left(\frac{1}{a_{\min}(\omega)} h_T^{1/2}\Vert \vq - \vq_h \Vert_{L^2(T)} + h_T^{-1/2} \Vert u_h-P_h u \Vert_{L^2(T)}\right) \Vert \lambda_h - P_h^\cE u \Vert_{L^2(E)}\,.
\end{split}
\end{equation*}
Dividing both  sides of the estimate by $\Vert \lambda_h - P_h^\cE u \Vert_{L^2(E)}$ gives the desired result.
\end{proof}

We now define an approximation $\widetilde{u}_{h,\omega} \in \cW_{1;h}$, where 
$\cW_{1;h} \subset L^2(D)$ denotes the Crouzeix-Raviart space of all (discontinuous) 
piecewise linear 
functions that have continuous averages across all element faces $E \in \mathcal{E}_I$
(cf.~\cite[\S~1.2.6]{ErnGuermond:2004}). The construction is based on the following 
observation (cf.~\cite[Lemma 2.1]{ArnoldBrezzi:1985}, the proof of which 
extends in a straight forward way to $d=3$).

\begin{lemma}
\label{lem:pwlin}
Let $w \in L^2(\mathcal{E})$ and $T \in \mathcal{T}_h$. Then there exists a unique
linear function $\chi_T = \chi_T(w)$ on~$T$ such that
\begin{equation}
\label{cond}
\int_E \chi_T \, \text{d}E = \int_E w \, \text{d}E
\end{equation}
for all faces $E \subset \partial T$. Moreover, 
\begin{equation}
\label{est}
\Vert \chi_T \Vert_{L^2(T)} \; \lesssim \; h_T^{1/2} 
\sum_{E \subset \partial T} \Vert w \Vert_{L^2(E)} \, .
\end{equation}
The hidden constant depends only on the minimum angle of \ $T$.
\end{lemma}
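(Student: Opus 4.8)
The plan is to follow the two-dimensional argument of \cite[Lemma 2.1]{ArnoldBrezzi:1985} and to verify that every step carries over unchanged to $d=3$; the only structural facts used are that a nondegenerate $d$-simplex has $d+1$ faces and that the space of affine functions on $\R^d$ has dimension $d+1$, so the extension is essentially formal.

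For existence and uniqueness of $\chi_T$ I would argue that the map sending an affine function $\chi$ on $T$ to the vector $\big(\int_E\chi\,\mathrm dE\big)_{E\subset\partial T}\in\R^{d+1}$ is a linear map between spaces of the same (finite) dimension, hence a bijection as soon as it is injective. Since $\int_E\chi\,\mathrm dE=|E|\,\chi(b_E)$, where $b_E$ is the barycentre of the face $E$, injectivity is equivalent to the $d+1$ face barycentres of $T$ being affinely independent. This I would check directly: writing $v_0,\dots,v_d$ for the vertices and $S:=\sum_j v_j$, the barycentre of the face opposite $v_i$ is $\tfrac1d(S-v_i)$, and the affine map $x\mapsto\tfrac1d(S-x)$ has invertible linear part $-\tfrac1d I$, so it sends the affinely independent set $\{v_i\}$ onto the set of face barycentres. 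This yields the unique $\chi_T=\chi_T(w)$, and linearity of $w\mapsto\chi_T$ follows automatically from uniqueness.

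For the bound \eqref{est} I would scale to the fixed reference simplex $\widehat T$. On $\widehat T$ one has, for $\widehat w\in L^2(\partial\widehat T)$ and the associated affine function $\widehat\chi$,
\[
\Vert\widehat\chi\Vert_{L^2(\widehat T)}\;\lesssim\;\sum_{\widehat E\subset\partial\widehat T}\Vert\widehat w\Vert_{L^2(\widehat E)},
\]
because $\widehat\chi$ depends linearly on the data $\big(\int_{\widehat E}\widehat w\,\mathrm d\widehat E\big)_{\widehat E}$, all norms on the $(d+1)$-dimensional space of affine functions are equivalent, and $|\int_{\widehat E}\widehat w|\le|\widehat E|^{1/2}\Vert\widehat w\Vert_{L^2(\widehat E)}$. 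Pulling $w|_{\partial T}$ and $\chi_T$ back through the affine bijection $F_T\colon\widehat T\to T$, the face conditions \eqref{cond} become the corresponding reference conditions, since the constant surface Jacobian of $F_T$ on each face cancels on both sides of \eqref{cond}; hence $\widehat\chi:=\chi_T\circ F_T$ is exactly the reference solution for $\widehat w:=w\circ F_T|_{\partial\widehat T}$. It then remains to reinstate the Jacobian factors $\Vert\chi_T\Vert_{L^2(T)}=|\det DF_T|^{1/2}\Vert\widehat\chi\Vert_{L^2(\widehat T)}$ and $\Vert w\Vert_{L^2(E)}=\sigma_E^{1/2}\Vert\widehat w\Vert_{L^2(\widehat E)}$ with $\sigma_E=|E|/|\widehat E|$, and to observe that $|\det DF_T|^{1/2}\sigma_E^{-1/2}$ equals, up to a constant depending only on $d$, $\big(|T|/|E|\big)^{1/2}$, which is $\lesssim h_T^{1/2}$ because $|T|/|E|$ is $1/d$ times the height of $T$ over $E$ and that height never exceeds $\diam(T)=h_T$. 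Assembling these three estimates gives \eqref{est} with a constant that is purely dimensional, and in particular controlled by the minimum angle of $T$ as claimed.

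The one point requiring a little care --- the ``main obstacle'', such as it is --- is the last combination of Jacobian factors: estimating $|\det DF_T|$ and $\sigma_E$ separately would give a constant that degenerates for flat elements, whereas pairing the two makes the degenerate parts cancel and leaves the innocuous ratio $|T|/|E|$. The rest is routine, and passing from $d=2$ to $d=3$ changes nothing essential.
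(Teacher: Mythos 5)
Your proof is correct and is exactly the standard Crouzeix--Raviart-type argument (unisolvence of the face-mean degrees of freedom via affine independence of the face barycentres, plus a reference-element scaling with the Jacobian factors paired as $(|T|/|E|)^{1/2}\lesssim h_T^{1/2}$) that underlies the citation the paper relies on, namely \cite[Lemma 2.1]{ArnoldBrezzi:1985} together with the remark that it extends to $d=3$. As a bonus, your pairing of the volume and surface Jacobians shows the hidden constant depends only on $d$, which is slightly stronger than the stated dependence on the minimum angle of $T$.
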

We define the recovered pressure
approximation $\widetilde{u}_{h,\omega} \in \cW_{1;h}$ elementwise by
\begin{equation}
\label{def:P1}
\widetilde{u}_{h,\omega}|_T := \chi_T\left(\lambda_{h,\omega}\right)\,, \quad 
\text{for all} \ \ T \in \mathcal{T}_h\,.
\end{equation}
Lemma \ref{lem:pwlin} ensures that $\widetilde{u}_{h,\omega} \in \cW_{1;h}$ is well-defined 
and unique. The value of 
the multiplier $\lambda_{h,\omega}$ coincides with the value of $\widetilde{u}_{h,\omega}$ 
at the centre of each face $E \in \mathcal{E}$. Note that 
the recovered pressure approximation $\widetilde{u}_{h,\omega}
\notin H^1_0(D)$ in general. It should not be confused with the standard
continuous piecewise linear pressure approximation. 
The approximation error of $\widetilde{u}_{h,\omega}$ is bounded as follows.

\begin{theorem}
\label{th:P1}
Let $u_\omega$ be the pressure solution in
\eqref{pde1.1}-\eqref{pde1.2} and let $\widetilde{u}_{h,\omega} \in \cW_{1;h}$ be
defined by \eqref{def:P1}. Then, under the assumptions of Theorem
\ref{prop:ChScTe} and for all \ $0<s<\min(t,\frac{\pi}{\theta_{\max}}) <
1$,
\begin{equation}
\label{est:P1}
\Vert u_\omega-\widetilde{u}_{h,\omega} \Vert_{L^2(D)} \;\lesssim\; C_u(\omega)\; h^{2s}
\end{equation}
where $C_u(\omega)$ is defined in \eqref{eq:dualbound}. 
Moreover,  $\Vert u -\widetilde{u}_{h} \Vert_{L^p(\Omega,  L^2(D))} \lesssim h^{2s}$\,, for all $p < r$.
\end{theorem}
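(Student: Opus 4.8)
The plan is to follow the deterministic argument of \cite{ArnoldBrezzi:1985}, keeping careful track of the dependence of all constants on $\omega$, and to compare $\widetilde u_{h,\omega}$ not directly with $u_\omega$ but with an intermediate piecewise linear function. Since $u_\omega \in H^1_0(D)$, its trace lies in $L^2(\cE)$, so Lemma~\ref{lem:pwlin} allows us to define $v_{h,\omega} \in \cW_{1;h}$ elementwise by $v_{h,\omega}|_T := \chi_T(u_\omega|_\cE)$; this really lies in $\cW_{1;h}$ because on adjacent elements $\chi_T$ and $\chi_{T'}$ are pinned by the same face averages $\int_E u_\omega$. The triangle inequality then gives $\Vert u_\omega - \widetilde u_{h,\omega}\Vert_{L^2(D)} \le \Vert u_\omega - v_{h,\omega}\Vert_{L^2(D)} + \Vert v_{h,\omega} - \widetilde u_{h,\omega}\Vert_{L^2(D)}$, and I would bound the two terms separately.

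For the first term, the key point is that the $d+1$ face-average functionals $w \mapsto \int_E w\,\mathrm{d}E$, $E \subset \partial T$, are unisolvent on $\mathbb{P}_1(T)$ — this is exactly the classical Crouzeix--Raviart element — so $w \mapsto \chi_T(w)$ reproduces linear polynomials on $T$. Combining this with the scaling estimate \eqref{est} of Lemma~\ref{lem:pwlin} and the Bramble--Hilbert lemma in fractional order Sobolev spaces (as in \cite{DuSc:80}, already used in the proof of Lemma~\ref{th:mfe2}), followed by the usual affine scaling, yields $\Vert u_\omega - \chi_T(u_\omega|_\cE)\Vert_{L^2(T)} \lesssim h_T^{1+s}\,|u_\omega|_{H^{1+s}(T)}$; squaring, summing over $T \in \cT_h$ and invoking the regularity bound $\Vert u_\omega\Vert_{H^{1+s}(D)} \lesssim C_{\mathrm{reg}}(\omega)$ from Theorem~\ref{prop:ChScTe} gives $\Vert u_\omega - v_{h,\omega}\Vert_{L^2(D)} \lesssim C_{\mathrm{reg}}(\omega)\, h^{1+s}$.

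For the second term, I would use that $\chi_T$ depends on $w$ only through its face averages and that $P_h^\cE$ preserves those averages, so $\chi_T(u_\omega|_\cE) = \chi_T(P_h^\cE u_\omega)$ and hence, by linearity, $v_{h,\omega}|_T - \widetilde u_{h,\omega}|_T = \chi_T(P_h^\cE u_\omega - \lambda_{h,\omega})$. Applying \eqref{est} and then Lemma~\ref{lem:M-1} to each face $E \subset \partial T$ gives
\[
\Vert v_{h,\omega} - \widetilde u_{h,\omega}\Vert_{L^2(T)} \;\lesssim\; \frac{1}{a_{\min}(\omega)}\,h_T\,\Vert \vq_\omega - \vq_{h,\omega}\Vert_{L^2(T)} \,+\, \Vert u_{h,\omega} - P_h u_\omega\Vert_{L^2(T)}\,,
\]
and squaring and summing over $T$ (using $h_T \le h$) produces $\Vert v_{h,\omega} - \widetilde u_{h,\omega}\Vert_{L^2(D)} \lesssim a_{\min}(\omega)^{-1}\,h\,\Vert \vq_\omega - \vq_{h,\omega}\Vert_{L^2(D)} + \Vert u_{h,\omega} - P_h u_\omega\Vert_{L^2(D)}$. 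Inserting $\Vert \vq_\omega - \vq_{h,\omega}\Vert_{L^2(D)} \lesssim C_q(\omega)\,h^s$ from Theorem~\ref{cor:mfe5} and $\Vert u_{h,\omega} - P_h u_\omega\Vert_{L^2(D)} \lesssim C_u(\omega)\,h^{2s}$ from Lemma~\ref{th:mfe6}, and using that $h^{1+s} \le h^{2s}$ since $0<s<1$, we obtain $\Vert u_\omega - \widetilde u_{h,\omega}\Vert_{L^2(D)} \lesssim (C_{\mathrm{reg}}(\omega) + a_{\min}(\omega)^{-1}C_q(\omega) + C_u(\omega))\,h^{2s}$; an elementary check using $\Vert a_\omega\Vert_{C^t(\overline D)} \ge a_{\max}(\omega) \ge a_{\min}(\omega)$ shows $C_{\mathrm{reg}}(\omega)$ and $a_{\min}(\omega)^{-1}C_q(\omega)$ are both dominated by $C_u(\omega)$, which yields \eqref{est:P1}. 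The $L^p(\Omega)$ bound then follows exactly as in the proofs of Theorems~\ref{th:super-p0} and \ref{th:mfe7}: $C_u \in L^q(\Omega)$ for all $q<r$ by Assumptions~A1--A3 and H\"older's inequality, so $\Vert u - \widetilde u_h\Vert_{L^p(\Omega,L^2(D))} \lesssim h^{2s}$ for all $p<r$.

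I expect the only genuinely technical step to be the fractional-order interpolation estimate $\Vert u_\omega - \chi_T(u_\omega|_\cE)\Vert_{L^2(T)} \lesssim h_T^{1+s}|u_\omega|_{H^{1+s}(T)}$ for the Crouzeix--Raviart-type interpolant: one has to check that the face-average degrees of freedom defining $\chi_T$ stay bounded on $H^{1+s}(T)$ for the (possibly small) exponents $s>0$ at hand in two space dimensions, and then combine the fractional Bramble--Hilbert lemma with the affine scaling of both $\chi_T$ and the $H^{1+s}$-seminorm. Everything else is bookkeeping of $\omega$-dependent constants and a routine reuse of Lemmas~\ref{lem:M-1}, \ref{lem:pwlin}, \ref{th:mfe6} together with Theorems~\ref{prop:ChScTe} and \ref{cor:mfe5}.
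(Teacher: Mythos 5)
Your argument is correct and takes essentially the same approach as the paper: your intermediate function $\chi_T(u_\omega|_{\cE})$ is precisely the Crouzeix--Raviart interpolant $\mathcal{I}_h^{\mathrm{CR}}u_\omega$ used there, and your handling of the second term via Lemma~\ref{lem:pwlin}, Lemma~\ref{lem:M-1}, Theorem~\ref{cor:mfe5} and Lemma~\ref{th:mfe6}, including the observation $h^{1+s}\le h^{2s}$ and the absorption of the constants into $C_u(\omega)$, mirrors the paper's proof. The only immaterial difference is that you establish $\Vert u_\omega-\mathcal{I}_h^{\mathrm{CR}}u_\omega\Vert_{L^2(T)}\lesssim h_T^{1+s}\Vert u_\omega\Vert_{H^{1+s}(T)}$ by a fractional Bramble--Hilbert and scaling argument, whereas the paper deduces it from the integer-order estimates by operator interpolation between $H^1(T)$ and $H^2(T)$.
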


\begin{proof}
We follow the proof of \cite[Theorem 2.2]{ArnoldBrezzi:1985} in the lowest order case
for a fixed sample $\omega \in \Omega$. We will not make this dependence explicit
in the proof though.   
Let $\mathcal{I}_h^\text{CR}: W^{1,1}(D) \to \cW_{1;h}$ be the canonical 
FE interpolation operator (as defined e.g. in
\cite[\S~1.4]{ErnGuermond:2004}). Then, for any $v \in W^{1,1}(D)$,
\begin{equation}
\label{est1}
\int_E (v - \mathcal{I}_h^\text{CR} v) \, \text{d}E = 0\,, \qquad \text{for all} \ \ E \in \mathcal{E}.
\end{equation}

We now estimate the approximation error locally on each element $T \in \mathcal{T}_h$\,.
The triangle inequality gives
\begin{equation}
\label{split-error}
\Vert u - \widetilde{u}_{h} \Vert_{L^2(T)}
\leq
\Vert u - \mathcal{I}_h^\text{CR} u \Vert_{L^2(T)} + \Vert \mathcal{I}_h^\text{CR} u - \widetilde{u}_{h}\Vert_{L^2(T)} \
.
\end{equation}
We bound each term on the right-hand side of the expression above separately.
First note that (cf. \cite[Thm ~1.103]{ErnGuermond:2004}), for all $v \in H^k(T)$ with $k=1,2$,
\[
\Vert v - \mathcal{I}_h^\text{CR} v \Vert_{L^2(T)} \lesssim h_T^{k} |v|_{H^{k}(T)}\,,
\]
and recall that the pressure $u \in H^{1+s}(D)$, with 
$0<s<\min(t,\frac{\theta_{\max}}{\pi}) < 1$ 
(cf.~Theorem~\ref{prop:ChScTe}). Therefore, since $H^1(T) \subset W^{1,1}(T)$, an operator 
interpolation argument between $H^1(T)$ and $H^2(T)$ (cf. \cite[Appendix B]{McLean}) allows 
us to conclude that the first term in \eqref{split-error} is bounded by
\begin{equation}
\label{project-error}
\Vert u - \mathcal{I}_h^\text{CR} u \Vert_{L^2(T)} = 
\Vert (I - \mathcal{I}_h^\text{CR}) u \Vert_{L^2(T)} \lesssim 
\ h_T^{1+s} \ \Vert u \Vert_{H^{1+s}(T)}\, .
\end{equation}
To estimate the second term in \eqref{split-error}, note that it follows from the definition of $P_h^\mathcal{E}$, as
well as from \eqref{def:P1} and \eqref{est1} that
\[
\int_E (\widetilde{u}_{h} - \mathcal{I}_h^\text{CR} u) \, \text{d}E = \int_E (\chi_T(\lambda_{h}) - u) \, \text{d}E =
\int_E (\lambda_{h} - P_h^\mathcal{E} u) \, \text{d}E\,, \qquad \text{for all} \ \ E \subset \partial T.
\]
Thus, using Lemma~\ref{lem:pwlin} with $w = \lambda_h-P_h^\cE u$ and $\chi_T = 
\widetilde{u}_{h} - \mathcal{I}_h^\text{CR} u$ and combining it with the estimate 
in Lemma~\ref{lem:M-1}, we obtain for each element $T \in \cT_h$,
$$
\Vert \widetilde{u}_{h} - \mathcal{I}_h^\text{CR} u \Vert_{L^2(T)} \lesssim
h_T^{1/2} \sum_{E \subset \partial T} \Vert \lambda_h - P_h^\cE u 
\Vert_{L^2(E)} \lesssim \frac{1}{a_{\min}(\omega)} h_T \Vert \vq -
\vq_h  \Vert_{L^2(T)} + \Vert u_h - P_h u  \Vert_{L^2(T)} \,.
$$
Using this together with \eqref{project-error} in \eqref{split-error}, squaring and summing over all $T \in \cT_h$, the
estimate \eqref{est:P1} follows from Theorem \ref{cor:mfe5} and Lemma \ref{th:mfe6}, since $s < 1$.
The bound on the moments follows due to Assumptions A1--A3 and H\"{o}lder's inequality.
\end{proof}

\begin{remark}
Note that for diffusion coefficients with trajectories in
$\mathcal{C}^{t}(\bar{D})$ with $t \le 1/2$ (e.g. for the
exponential covariance), for domains $D$ with
reentrant corners with $\theta_{\max} \approx 2\pi$, or for source terms $f$ that are only in
$L^2(D)$, the recovered pressure approximation
$\widetilde{u}_{h} \in \cW_{1;h}$ converges with the same rate as the piecewise
constant approximation $u_h \in \cW_h$ (see Theorem~\ref{th:super-p0}). 
\end{remark}

\begin{remark}
It is possible to prove the bound in \eqref{est:P1} without using the intermediate result in Lemma~\ref{lem:M-1}. 
(See \cite[\S~7.4]{BBF:2013} for details.) 
\end{remark}

\section{Application in the analysis of multilevel Monte Carlo methods}\label{sec:MLMC}

We apply the mixed FE error analysis carried out in Section~\ref{sec:error} to the complexity analysis of
\textit{multilevel Monte Carlo methods}.
Crucially, the FE error convergence rate determines the computational cost of multilevel Monte Carlo. 
It turns out that MLMC estimators are significantly more efficient than standard Monte Carlo 
for the groundwater flow problem.
We use MLMC to estimate the expected value $\mathbb{E}[Q]$ of certain functionals $Q:=\mathcal{G}(\vq,u)$ of the solution to the lognormal diffusion problem \eqref{pde1.1}-\eqref{pde1.2}. 
This approach is quite general since many important solution statistics, such as moments or failure probabilities, can be expressed in terms of expectations.

We fix a realisation $\omega \in \Omega$ and approximate the solution $(\vq_\omega,u_\omega)$ to \eqref{pde1.1}-\eqref{pde1.2} by the associated mixed FE approximation $(\vq_{h,\omega},u_{h,\omega})$ which satisfies \eqref{3.4}.
This leads us to approximate the functional $Q$ by $Q_h(\omega):=\mathcal{G}(\vq_{h,\omega},u_{h,\omega})$
evaluated on a FE mesh with mesh size $h$.
A common method to estimate the expected value of $Q_h(\omega)$ is the \textit{standard Monte Carlo estimator} for
$\mathbb{E}[Q_h]$, defined as
$$
\hat{Q}_{h,N}^{MC} := \frac{1}{N} \sum_{n=1}^N Q_h(\omega^{(n)}),
$$
where $Q_h(\omega^{(n)})$ is the functional corresponding to sample $\omega^{(n)}$.
Note that we compute $N$ statistically independent samples in total.

Let $X:\Omega \rightarrow \mathbb{R}$ denote a random variable.
In statistics it is common to quantify the accuracy of an estimator $\hat{X}$ to $\mathbb{E}[X]$ by the root mean square error (RMSE)
$$
e(\hat{X})^2:=\mathbb{E}[\hat{X}-\mathbb{E}[X]]^2 \ .
$$
The associated computational cost $C_\varepsilon(\hat{X})$ is characterised by the number of floating point operations required to achieve a RMSE $e(\hat{X})\leq \varepsilon$. 

The standard MC estimator is unbiased, $\mathbb{E}[\hat{Q}_{h,N}^{MC}]=\mathbb{E}[Q_h]$, and its variance is given by  $\mathbb{V}[\hat{Q}_{h,N}^{MC}] = N^{-1}\mathbb{V}[Q_h]$.
It is easy to see that these facts allow us to expand the mean square error (MSE) as
\begin{equation}
\label{RMSE}
e(\hat{Q}_{h,N}^{MC})^2 = N^{-1}\mathbb{V}[Q_h] + (\mathbb{E}[Q_h-Q])^2 \ .
\end{equation}
The first term above is the \textit{sample error} and is mainly controlled by the sample size $N$.
The second term, often referred to as \textit{bias}, is determined solely by the FE approximation error.
To ensure $e(\hat{Q}_{h,N}^{MC})\leq \varepsilon$ it is sufficient that both terms in \eqref{RMSE} are smaller than $\varepsilon^2/2$.
For the sample error this can be achieved by choosing $N=O(\varepsilon^{-2})$.
For the bias to be of order $|\mathbb{E}[Q_h-Q]|= O(\varepsilon)$, the FE mesh size has to be chosen sufficiently small.
In the context of our groundwater flow problem this is a tall order.
Since realisations of the diffusion coefficient are spatially rough and highly oscillatory we need a very fine FE mesh to obtain acceptable accuracies of the solution.
At the same time, the number of samples is in general quite large due to the slow convergence of the Monte Carlo estimator.

The MLMC estimator overcomes this difficulty by estimating the expected value not only on a single FE mesh with fixed mesh size but on a \textit{hierarchy} of increasingly finer FE meshes $\{\mathcal{T}_{h_\ell}\}_{\ell=0,\dots,L}$ with $\mathcal{T}_h:=\mathcal{T}_{h_L}$ the finest mesh, and $h_{\ell}/h_{\ell-1} \leq c < 1$.
Observe that by linearity of the expectation we may write
$$
\mathbb{E}[Q_h] = \mathbb{E}[Q_{h_0}] + \sum_{\ell=1}^L \mathbb{E}[Q_{h_\ell}-Q_{h_{\ell-1}}] \ .
$$
Let $Y_0:=Q_{h_0}$ and $Y_\ell:=Q_{h_\ell}-Q_{h_{\ell-1}}$, $\ell=1,\dots,L$.
The \textit{multilevel Monte Carlo estimator} for $\mathbb{E}[Q_h]$ is then defined as
$$
\hat{Q}_{h,N}^{ML} := \sum_{\ell=0}^L \hat{Y}_{\ell,N_{\ell}}^{MC}
=
\sum_{\ell=0}^L \frac{1}{N_\ell} \sum_{n=1}^{N_\ell} Y_\ell(\omega^{(n)}) \ .
$$
Since each correction $Y_\ell$, $\ell=0,\dots,L$ is estimated independently from the others, the RMSE of the MLMC estimator reads
$$
e(\hat{Q}_{h,N}^{ML})^2 = \sum_{\ell=0}^L N_{\ell}^{-1}\mathbb{V}[Y_\ell] + (\mathbb{E}[Q_h-Q])^2 \ .
$$
Notably, the bias associated with the MLMC estimator has not changed compared to the standard MC estimator. 
However, the sample error can now be controlled and distributed over the entire hierarchy of FE meshes. 
Importantly, the variance of the differences $Y_{\ell} \rightarrow 0$ as $h_\ell \rightarrow 0$ and hence the number of samples $N_{\ell}$ required on the finer meshes is very small. 
Essentially, the MLMC estimator allows us to shift a large part of the computational effort to coarse, inexpensive grids and requires only a small number of expensive fine grid simulations while maintaining the same accuracy (in terms of the RMSE).
A more detailed introduction to MLMC methods in the context of PDEs with random coefficients is presented in \cite{CGST:2011}.

\subsection{Multilevel Monte Carlo complexity}

Let $C_\ell$ denote the cost to obtain one sample of $Q_{h_\ell}$. 
The following result on the $\varepsilon$-cost of the MLMC estimator is taken from \cite[Theorem 1]{CGST:2011}.

\begin{theorem}
\label{MLMC-conv}
Let $\alpha, \beta, \gamma, c_{M_1}, c_{M_2}, c_{M_3}$ be positive constants such that $\alpha \geq \frac{1}{2} \min\{\beta,\gamma\}$ and 
\begin{itemize}
\item[\bf M1.]
$|\mathbb{E}[Q_{h_\ell}-Q]| \leq c_{M_1} h_\ell^\alpha$,
\item[\bf M2.]
$\mathbb{V}[Q_{h_\ell}-Q_{h_{\ell-1}}] \leq c_{M_2} h_\ell^\beta$,
\item[\bf M3.]
$C_\ell \leq c_{M_3} h_\ell^{-\gamma}$ \ .
\end{itemize}
Then, for any $\varepsilon < e^{-1}$, there exist a level $L$ and a sequence $\{N_\ell\}_{\ell=0}^L$, such that $e(\hat{Q}_{h,\{N_\ell\}}^{ML}) < \varepsilon$ and 
$$
C_\varepsilon(\hat{Q}_{h,\{N_\ell\}}^{ML}) 
\lesssim
\begin{cases}
\varepsilon^{-2}, &\text{if } \beta > \gamma,\\
\varepsilon^{-2}(\log \varepsilon)^2, &\text{if } \beta = \gamma,\\
\varepsilon^{-2-(\gamma-\beta)/\alpha},&\text{if } \beta < \gamma \ .
\end{cases}
$$
The hidden constant depends on $c_{M_1}, c_{M_2}, c_{M_3}$.
\end{theorem}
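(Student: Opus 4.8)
The statement is the classical multilevel Monte Carlo complexity theorem, and the proof follows \cite[Theorem~1]{CGST:2011}; here is the plan. Abbreviating $V_\ell := \mathbb{V}[Y_\ell]$, recall the exact MSE splitting for the MLMC estimator derived above, namely
\[
e(\hat{Q}_{h,\{N_\ell\}}^{ML})^2 \;=\; \sum_{\ell=0}^L N_\ell^{-1} V_\ell \;+\; \big(\mathbb{E}[Q_{h_L}-Q]\big)^2 .
\]
The plan is to force each of the two terms to be at most $\varepsilon^2/2$. For the bias, assumption \textbf{M1} gives $|\mathbb{E}[Q_{h_L}-Q]|\le c_{M_1}h_L^\alpha$, so it suffices to take $L$ to be the smallest level with $c_{M_1}h_L^\alpha \le \varepsilon/\sqrt{2}$; since $h_\ell/h_{\ell-1}\le c<1$ this forces $h_L \sim \varepsilon^{1/\alpha}$ and $L = O(\log\varepsilon^{-1})$.

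For the sample-error term I would treat the $N_\ell$ as continuous variables and minimise the cost $\sum_\ell N_\ell C_\ell$ subject to the constraint $\sum_\ell N_\ell^{-1}V_\ell = \varepsilon^2/2$. A Lagrange-multiplier computation yields the optimal choice $N_\ell \propto \sqrt{V_\ell/C_\ell}$; rounding up, one sets
\[
N_\ell := \Big\lceil\, 2\varepsilon^{-2}\,\sqrt{V_\ell/C_\ell}\;\textstyle\sum_{k=0}^{L}\sqrt{V_k C_k}\,\Big\rceil ,
\]
which by construction makes $\sum_\ell N_\ell^{-1}V_\ell \le \varepsilon^2/2$, hence $e(\hat{Q}_{h,\{N_\ell\}}^{ML})<\varepsilon$, and gives
\[
C_\varepsilon(\hat{Q}_{h,\{N_\ell\}}^{ML}) \;=\; \sum_{\ell=0}^L N_\ell C_\ell \;\lesssim\; \varepsilon^{-2}\Big(\sum_{\ell=0}^L\sqrt{V_\ell C_\ell}\Big)^2 \;+\; \sum_{\ell=0}^L C_\ell ,
\]
the last term coming from the ceilings.

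It then remains to estimate these two sums. The variance $V_0$ is bounded by a constant (from the finite-moment estimates of Section~\ref{sec:error}), \textbf{M2} gives $V_\ell \le c_{M_2}h_\ell^\beta$ for $\ell\ge1$, and \textbf{M3} gives $C_\ell \le c_{M_3}h_\ell^{-\gamma}$; hence $\sqrt{V_\ell C_\ell}\lesssim h_\ell^{(\beta-\gamma)/2}$ and $C_\ell \lesssim h_\ell^{-\gamma}$, and since the $h_\ell$ decay geometrically both are essentially geometric series in $\ell$. One then distinguishes the three cases: if $\beta>\gamma$ the first series converges to an $O(1)$ constant and $\sum_\ell C_\ell \lesssim h_L^{-\gamma}$; if $\beta=\gamma$ the first series is $O(L)=O(\log\varepsilon^{-1})$; if $\beta<\gamma$ both series are dominated by the finest level, so $\sum_\ell\sqrt{V_\ell C_\ell}\lesssim h_L^{(\beta-\gamma)/2}$ and $\sum_\ell C_\ell\lesssim h_L^{-\gamma}$. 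Inserting $h_L\sim\varepsilon^{1/\alpha}$ and using the hypothesis $\alpha\ge\frac12\min\{\beta,\gamma\}$ to check that the additive term $\sum_\ell C_\ell$ never dominates (it forces $\gamma/\alpha\le2$ when $\gamma\le\beta$, and $\gamma/\alpha\le 2+(\gamma-\beta)/\alpha$ when $\beta\le\gamma$) then produces the three claimed rates $\varepsilon^{-2}$, $\varepsilon^{-2}(\log\varepsilon)^2$, and $\varepsilon^{-2-(\gamma-\beta)/\alpha}$.

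The main obstacle is precisely this last bookkeeping step: the case split for the geometric sums and, in each regime, verifying that the contribution of rounding $N_\ell$ up to integers does not spoil the rate — this is exactly where the balance condition $\alpha\ge\frac12\min\{\beta,\gamma\}$ enters. Everything else (the MSE splitting, the Lagrange optimisation, the choice of $L$) is elementary, and it is cleanest to simply quote the conclusion of \cite[Theorem~1]{CGST:2011} once \textbf{M1}--\textbf{M3} have been verified for the present mixed finite element discretisation.
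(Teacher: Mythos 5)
Your proposal is correct and matches the paper's treatment: Theorem~\ref{MLMC-conv} is not proved in the paper at all but simply quoted from \cite[Theorem 1]{CGST:2011}, which is precisely the course of action you recommend in your closing paragraph. Your reconstruction of the underlying argument (MSE splitting, choosing $L$ so the bias is below $\varepsilon/\sqrt{2}$ with $h_L\sim\varepsilon^{1/\alpha}$, the Lagrange-optimal $N_\ell\propto\sqrt{V_\ell/C_\ell}$, and the three-regime geometric-sum bookkeeping in which $\alpha\ge\frac{1}{2}\min\{\beta,\gamma\}$ guarantees the rounding cost $\sum_\ell C_\ell$ never dominates) is the standard proof from that reference and contains no gaps.
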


In Theorem~\ref{MLMC-conv}, M1-M2 describe assumptions on the spatial
discretisation error and must be verified for each output quantity in conjunction with a particular spatial discretisation.
We will prove these assumptions for the mixed FE discretisation of the lognormal diffusion problem and certain output
quantities in Section~\ref{sec:verify} ahead.
M3 is an assumption on the cost to obtain one sample of the output $Q_{h_\ell}$.
In our problem this is the cost of obtaining one sample of the lognormal diffusion coefficient plus the cost to solve the associated discretized PDE problem. 

Sampling the lognormal diffusion coefficient $a$ can be done by computing approximate Karhunen-Lo\`{e}ve eigenpairs of
the underlying Gaussian random field $\log(a)$ (see e.g., \cite{EEU:2007,SchwabTodor:2006}).
Then, a certain number $K_\ell$ of leading eigenpairs is retained in the expansion on level $\ell$ and the
truncated Karhunen-Lo\`eve expansion (KLE) of $\log(a)$ serves as approximation of the random field. 
The optimal choice of the truncation parameter $K_\ell$ that guarantees a negligible truncation error is
problem-dependent. Typically we expect $K_\ell\gtrsim h_\ell^{-m}$, where $m=1,2$ (cf. \cite[\S~4.1]{TSGU:2012}).
Exact samples of the underlying Gaussian field can be obtained by computing a factorisation of the covariance matrix associated with the quadrature nodes on the FE mesh.
A fast and efficient approach to do this is by circulant embedding (cf. \cite{Graham_etal:2011}) which has at most
log-linear complexity with respect to the number of quadrature points.
Thus, assuming that the cost of the PDE solver is of optimal order, that is, it scales linearly with respect to
the number of unknowns in the FE discretisation, then $\gamma \approx d$ (circulant embedding), or $\gamma\approx d+m$,
$m=1,2$ (truncated KLE).

The three upper bounds in Theorem~\ref{MLMC-conv} correspond to three scenarios.
Depending on the ratio of $\beta$ and $\gamma$ in Assumptions M2 and M3, the major part of the computational cost could be on the coarsest level ($\beta >
\gamma$), spread evenly across all levels ($\beta = \gamma$) or on the finest level ($\beta <
\gamma$).

In the context of realistic groundwater flow applications in two and three space dimensions the costs to obtain one
sample grow rapidly with decreasing spatial resolution and thus we will almost always be in the last regime $\beta <
\gamma$. 
If $\beta = 2 \alpha$ (as is often the case, see Section~\ref{sec:verify} ahead), then, the total cost
of the MLMC estimator is of order $\varepsilon^{-\gamma/\alpha}$ which is asymptotically the same as the cost to compute only
\textit{one} sample on the finest mesh to accuracy $\varepsilon$. 
The gains we can expect by using the MLMC estimator in place of the standard Monte Carlo estimator are thus
significant and when $\beta = 2 \alpha$ the MLMC estimator is asymptotically optimal.

\subsection{Verifying the mixed FE error convergence rates}\label{sec:verify}

The performance analysis of the MLMC method as stated in Theorem~\ref{MLMC-conv} relies on bounds for $|\mathbb{E}[Q_{h_\ell}-Q]|$ and $\mathbb{V}[Q_{h_\ell}-Q_{h_{\ell-1}}]$ in terms of the characteristic mesh size $h_\ell$ on level $\ell$.
These bounds can be proved by bounding the spatial discretisation error.
See \cite{CST:2011,TSGU:2012} for an analysis of MLMC for \eqref{pde1.1}-\eqref{pde1.2} in the framework of
standard FEs. 

By using the appropriate error estimates derived in Section~\ref{sec:error} we can now easily deduce the convergence rates $\alpha$ and $\beta$ in Theorem~\ref{MLMC-conv} for various quantities of interest $Q$ and
the associated FE approximation $Q_h$. 
In Proposition \ref{prop:MLMC1} below, we use the following simple Lemma.

\begin{lemma}
\label{lem:simple}
Let $\mathcal{B}$ denote a Banach space with norm $\Vert \cdot \Vert_\mathcal{B}$.
Let $X,Y  \in L^2(\Omega,\mathcal{B})$ be $\mathcal{B}$-valued random variables. 
Then we have
\begin{eqnarray}
\left|\mathbb{E}[\Vert X \Vert_{\mathcal{B}}-\Vert Y \Vert_{\mathcal{B}}]\right|
&\leq& \Vert X-Y \Vert_{L^1(\Omega,\mathcal{B})},\\
\mathbb{V}[\Vert X \Vert_{\mathcal{B}}-\Vert Y \Vert_{\mathcal{B}}] &\leq& \Vert X-Y \Vert^2_{L^2(\Omega,\mathcal{B})}\ .
\end{eqnarray}
\end{lemma}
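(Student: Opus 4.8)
The plan is to reduce both inequalities to the pointwise reverse triangle inequality in $\mathcal{B}$ together with two elementary facts: monotonicity of the expectation and the bound $\mathbb{V}[Z] \le \mathbb{E}[Z^2]$ for a real square-integrable random variable $Z$. First I would record that, for $P$-almost every $\omega \in \Omega$,
\[
\bigl|\, \Vert X(\omega)\Vert_{\mathcal{B}} - \Vert Y(\omega)\Vert_{\mathcal{B}} \,\bigr| \;\le\; \Vert X(\omega) - Y(\omega)\Vert_{\mathcal{B}},
\]
which is simply the reverse triangle inequality applied realisation by realisation. In particular the real-valued random variable $Z := \Vert X\Vert_{\mathcal{B}} - \Vert Y\Vert_{\mathcal{B}}$ satisfies $|Z| \le \Vert X-Y\Vert_{\mathcal{B}}$ pointwise, and since $X, Y \in L^2(\Omega,\mathcal{B}) \subset L^1(\Omega,\mathcal{B})$ and the norm is continuous, $Z$ is measurable with $Z \in L^2(\Omega) \subset L^1(\Omega)$; hence all expectations and the variance below are finite.

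For the first bound I would use monotonicity of the integral (or Jensen's inequality) to obtain
\[
\bigl|\mathbb{E}[Z]\bigr| \;\le\; \mathbb{E}\bigl[|Z|\bigr] \;\le\; \mathbb{E}\bigl[\Vert X-Y\Vert_{\mathcal{B}}\bigr] \;=\; \Vert X-Y\Vert_{L^1(\Omega,\mathcal{B})}.
\]
For the second bound I would invoke $\mathbb{V}[Z] = \mathbb{E}[Z^2] - (\mathbb{E}[Z])^2 \le \mathbb{E}[Z^2]$ and then square the displayed pointwise estimate, $Z^2 \le \Vert X-Y\Vert_{\mathcal{B}}^2$, to conclude
\[
\mathbb{V}[Z] \;\le\; \mathbb{E}\bigl[\Vert X-Y\Vert_{\mathcal{B}}^2\bigr] \;=\; \Vert X-Y\Vert^2_{L^2(\Omega,\mathcal{B})}.
\]

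There is really no obstacle here: the statement is elementary, and the only point deserving a word of care is the measurability and square-integrability of $Z$, which is immediate from $X, Y \in L^2(\Omega,\mathcal{B})$ and continuity of $\Vert\cdot\Vert_{\mathcal{B}}$. The lemma is stated only to package these two estimates cleanly, so that in Section~\ref{sec:verify} one can pass directly from the $L^p(\Omega,\mathcal{B})$ finite-element error bounds of Section~\ref{sec:error} to the expectation and variance bounds \textbf{M1}--\textbf{M2} required by Theorem~\ref{MLMC-conv}.
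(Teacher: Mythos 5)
Your proposal is correct and follows essentially the same route as the paper: the pointwise reverse triangle inequality combined with monotonicity of the expectation for the first bound, and $\mathbb{V}[Z]\le\mathbb{E}[Z^2]$ together with the squared pointwise estimate for the second. The only addition is your remark on measurability and square-integrability of $\Vert X\Vert_{\mathcal{B}}-\Vert Y\Vert_{\mathcal{B}}$, which the paper leaves implicit but which is a harmless (and correct) extra word of care.
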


\begin{proof}
It is easy to see that
$$
\left|\mathbb{E}[\Vert X \Vert_{\mathcal{B}}-\Vert Y \Vert_{\mathcal{B}}]\right|
\leq
\mathbb{E}[|\Vert X \Vert_{\mathcal{B}}-\Vert Y \Vert_{\mathcal{B}}|]
\leq
\mathbb{E}[\Vert X - Y \Vert_{\mathcal{B}}]
=
\Vert X-Y\Vert_{L^1(\Omega,\mathcal{B})} \ ,
$$
where, in the second step, we have used the reverse triangle inequality.
To bound the variance of the difference we use $\mathbb{V}[X] = \mathbb{E}[X^2] - (\mathbb{E}[X])^2 \leq \mathbb{E}[X^2]$, and, again, the reverse triangle inequality:
$$
\mathbb{V}[\Vert X \Vert_{\mathcal{B}}-\Vert Y \Vert_{\mathcal{B}}]
\leq 
\mathbb{E}[(\Vert X \Vert_{\mathcal{B}}-\Vert Y \Vert_{\mathcal{B}})^2]
\lesssim
\mathbb{E}[\Vert X - Y \Vert_{\mathcal{B}}^2]
=
\Vert X - Y \Vert_{L^2(\Omega,\mathcal{B})}^2 \ .\vspace{-3ex}
$$
\end{proof}

\begin{proposition}\label{prop:MLMC1}
Let $D \subset \mathbb{R}^2$ be a polygon with largest interior angle $\theta_{\max} \in(0,2\pi)$. 
Let Assumptions A1-A3 hold for some $0<t< 1$ and $r>2$. 
Define $t^\star:=\min(t,\frac{\pi}{\theta_{\max}})$.
Then, assumptions M1-M2 hold with $\alpha$ and $\beta$ as follows:
\begin{center}
\begin{tabular}{llll}\hline\noalign{\smallskip}
$Q$ & $Q_h$ && \textnormal{Reference}\\\noalign{\smallskip}\hline\noalign{\smallskip}
$\Vert\vq\Vert_{L^2(D)}$ & $\Vert\vq_h\Vert_{L^2(D)}$&$\alpha<t^\star$,
$\beta<2 t^\star$&\textnormal{Corollary~\ref{cor:velocity}} \\
$\Vert\vq\Vert_{H(\rdiv)}$ & $\Vert\vq_h\Vert_{H(\rdiv)}$&$\alpha<t^\star$,
$\beta<2t^\star$&\textnormal{Corollary~\ref{cor:velocity}} \\
$\Vert u\Vert_{L^2(D)}$ & $\Vert
u_h\Vert_{L^2(D)}$&$\alpha<\min(1,2t^\star)$, $\beta<\min(2,4t^\star)$&\textnormal{Theorem~\ref{th:super-p0}}\\
$\Vert u\Vert_{L^2(D)} $& $\Vert \widetilde{u}_{h}\Vert_{L^2(D)}$& $\alpha<2t^\star$,
$\beta<4t^\star$&\textnormal{Theorem~\ref{th:P1}}\\
$\mathcal{M}(\vq)$ & $\mathcal{M}(\vq_h)$ &
$\alpha<2t^\star$, $\beta<4t^\star$&\textnormal{Theorem~\ref{th:mfe7}}\\\noalign{\smallskip}\hline
\end{tabular}
\end{center}
Here, $\mathcal{M}\in \cV^\star$ is a linear functional of the Darcy velocity $\vq$ in
\eqref{3.3}
that satisfies Assumption A4.
\end{proposition}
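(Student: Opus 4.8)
The plan is to handle all five rows of the table by the same two-step recipe: deduce M1 from an $L^{1}(\Omega)$-bound on the relevant discretisation error, and deduce M2 from an $L^{2}(\Omega)$-bound combined with the triangle inequality between two consecutive levels. What makes this uniform is that Assumption~A3 is imposed with $r>2$, so every error estimate proved in Section~\ref{sec:error} is available with $L^{p}(\Omega,\cdot)$-norms for $p=2$ (and, a fortiori, for $p=1$ by H\"older's inequality), and with this choice all the random constants $C_{q}$, $C_{u}$, $C_{\mathcal{M}}$ are absorbed into those norms. Moreover, by Proposition~\ref{prop2.2} and the bounds of Section~\ref{sec:error}, all the quantities $\vq,\vq_{h},u,u_{h},\widetilde{u}_{h}$ lie in $L^{2}(\Omega,\mathcal{B})$ for the relevant Banach space $\mathcal{B}$, so Lemma~\ref{lem:simple} applies.

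For the first four rows the output is a norm, $Q=\Vert X\Vert_{\mathcal{B}}$ and $Q_{h}=\Vert X_{h}\Vert_{\mathcal{B}}$, with $(X,\mathcal{B})$ one of $(\vq,L^{2}(D)^{d})$, $(\vq,\Hdiv)$, $(u,L^{2}(D))$ and $X_{h}$ the corresponding mixed FE quantity $\vq_{h}$, $u_{h}$ or $\widetilde{u}_{h}$. Here I would invoke Lemma~\ref{lem:simple}: it gives $|\mathbb{E}[Q_{h_{\ell}}-Q]|\le\Vert X-X_{h_{\ell}}\Vert_{L^{1}(\Omega,\mathcal{B})}\le\Vert X-X_{h_{\ell}}\Vert_{L^{2}(\Omega,\mathcal{B})}$, and substituting Corollary~\ref{cor:velocity} (for the two velocity norms), Theorem~\ref{th:super-p0} (for $\Vert u_{h}\Vert_{L^{2}(D)}$) or Theorem~\ref{th:P1} (for $\Vert\widetilde{u}_{h}\Vert_{L^{2}(D)}$) yields exactly the tabulated exponent $\alpha$. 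Likewise, Lemma~\ref{lem:simple} gives $\mathbb{V}[Q_{h_{\ell}}-Q_{h_{\ell-1}}]\le\Vert X_{h_{\ell}}-X_{h_{\ell-1}}\Vert^{2}_{L^{2}(\Omega,\mathcal{B})}$; the triangle inequality bounds $\Vert X_{h_{\ell}}-X_{h_{\ell-1}}\Vert_{L^{2}(\Omega,\mathcal{B})}$ by $\Vert X-X_{h_{\ell}}\Vert_{L^{2}(\Omega,\mathcal{B})}+\Vert X-X_{h_{\ell-1}}\Vert_{L^{2}(\Omega,\mathcal{B})}$, and since the mesh hierarchy is refined geometrically (so $h_{\ell-1}\le C\,h_{\ell}$ with $C$ independent of $\ell$) both terms are $\lesssim h_{\ell}^{\alpha}$, whence $\mathbb{V}[Q_{h_{\ell}}-Q_{h_{\ell-1}}]\lesssim h_{\ell}^{2\alpha}$, i.e.\ $\beta=2\alpha$ as listed.

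For the last row, $Q=\mathcal{M}(\vq)$ with $\mathcal{M}$ satisfying A4, I would argue directly from linearity rather than through Lemma~\ref{lem:simple}. For M1, linearity of $\mathcal{M}$ and of the expectation give $|\mathbb{E}[\mathcal{M}(\vq)-\mathcal{M}(\vq_{h_{\ell}})]|=|\mathbb{E}[\mathcal{M}(\vq-\vq_{h_{\ell}})]|\le\Vert\mathcal{M}(\vq-\vq_{h_{\ell}})\Vert_{L^{1}(\Omega)}\le\Vert\mathcal{M}(\vq-\vq_{h_{\ell}})\Vert_{L^{2}(\Omega)}\lesssim h_{\ell}^{2s}$ by Theorem~\ref{th:mfe7}, so $\alpha<2t^{\star}$. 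For M2, using $\mathbb{V}[Z]\le\mathbb{E}[Z^{2}]=\Vert Z\Vert_{L^{2}(\Omega)}^{2}$ with $Z=\mathcal{M}(\vq_{h_{\ell}}-\vq_{h_{\ell-1}})$ and then the triangle inequality, $\Vert Z\Vert_{L^{2}(\Omega)}\le\Vert\mathcal{M}(\vq-\vq_{h_{\ell}})\Vert_{L^{2}(\Omega)}+\Vert\mathcal{M}(\vq-\vq_{h_{\ell-1}})\Vert_{L^{2}(\Omega)}\lesssim h_{\ell}^{2s}+h_{\ell-1}^{2s}\lesssim h_{\ell}^{2s}$ again by Theorem~\ref{th:mfe7} and the comparability of $h_{\ell-1}$ and $h_{\ell}$, so $\beta<4t^{\star}$.

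There is no genuinely hard step here: the proposition is essentially a bookkeeping exercise that assembles the error estimates of Section~\ref{sec:error} and reads off the exponents (noting throughout that $\beta=2\alpha$). The only points that require a little care are (i) that $p=2$, and hence also $p=1$, is an admissible exponent in every cited $L^{p}(\Omega)$-estimate, which is precisely why the hypothesis $r>2$ is needed; (ii) that Lemma~\ref{lem:simple}, via the reverse triangle inequality, is what lets the random constants be swallowed into the $L^{2}(\Omega,\mathcal{B})$-norms; and (iii) that consecutive mesh widths are comparable, so that the level-$(\ell-1)$ contribution in M2 is of the same order as the level-$\ell$ one.
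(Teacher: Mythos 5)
Your proposal is correct and follows essentially the same route as the paper: Lemma~\ref{lem:simple} (with $X$, $Y$, $\mathcal{B}$ chosen per row) combined with the $L^p(\Omega,\cdot)$ error estimates of Section~\ref{sec:error} for the four norm-valued quantities, and the direct $L^1(\Omega)$/$L^2(\Omega)$ bounds plus Theorem~\ref{th:mfe7} for the linear functional, with $r>2$ guaranteeing $p=2$ is admissible. You merely spell out more explicitly than the paper the triangle inequality between consecutive levels (and the comparability of $h_{\ell-1}$ and $h_\ell$) needed to turn the single-level estimates into the M2 bound, which the paper leaves implicit.
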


\begin{proof}
The first four cases follow by combining Lemma~\ref{lem:simple} 
with the corresponding error estimates in Section~\ref{sec:error},
choosing $X$, $Y$ and $\mathcal{B}$ accordingly.

For linear velocity functionals $\mathcal{M}$ we obtain
\[
|\mathbb{E}[\mathcal{M}(\vq-\vq_h)]|
\leq
\mathbb{E}[|\mathcal{M}(\vq-\vq_h)|]
=
\Vert \mathcal{M}(\vq-\vq_h)\Vert_{L^1(\Omega)} 
\] 
and
\[
\mathbb{V}[\mathcal{M}(\vq-\vq_h)]
\leq
\mathbb{E}[\mathcal{M}^2(\vq-\vq_h)]
=
\Vert \mathcal{M}(\vq-\vq_h)\Vert_{L^2(\Omega)}^2 \ . 
\] 
The assertion follows by combining these bounds with the error estimates in Theorem~\ref{th:mfe7}.
\end{proof}
\begin{remark}
As stated before Theorem~\ref{prop:ChScTe}, similar results can be proved for $D\subset \mathbb{R}^3$. 
\end{remark}


\section{Numerical experiments}\label{sec:numerics}

As a representative example, we consider the 2D ``flow cell'' problem in mixed formulation
\begin{align}\label{modnum}
a^{-1}(\omega,\vx) \vq(\omega,\vx) + \nabla u(\omega,\vx) &= 0,&\\ 
\nabla \cdot \vq(\omega,\vx) &= 0, &\quad \mathrm{in} \quad D=(0,1) \times (0,1) \nonumber.
\end{align}
The horizontal boundaries are no-flow boundaries, that is, $\vec{\nu} \cdot \vq = 0$. We have $u\equiv 1$ along the
western (inflow) and $u\equiv 0$ along the eastern (outflow) boundary.

The diffusion coefficient $a(\omega, x)$ is a lognormal random field; $\log(a)$ is a mean-zero Gaussian random field with variance $\sigma^2\equiv 1$ and a specific covariance function $\rho$.
In our examples we will use the exponential covariance
\begin{equation}\label{exp}
\rho(r) = \rho_\mathrm{exp}(r):=\sigma^2 \ \exp(-r/\lambda),
\end{equation}
where $r=\|\vec{x}-\vec{y}\|_2$ is the Euclidean distance of $\vec{x},\vec{y} \in \mathbb{R}^d$, and $\lambda>0$ denotes
the correlation length.
We will also consider the Mat\'ern covariance function 
\begin{equation}\label{matern}
\rho(r) = \rho_\nu(r):=\sigma^2 \ \frac{2^{1-\nu}}{\Gamma(\nu)}\left(\frac{r}{\widetilde{\lambda}}\right)^\nu
K_\nu\left(\frac{r}{\widetilde{\lambda}}\right),
\end{equation}
where $K_\nu$ is the modified Bessel function of second kind and order $\nu$, and
$\widetilde{\lambda}=\lambda/(2\sqrt{\nu})$ denotes the scaled correlation length.
Using the asymptotics of $K_{0.5}$ it is possible to show that $\rho_{0.5}$ is 
actually the exponential covariance $\rho_\mathrm{exp}$ with correlation length 
$\widetilde{\lambda}$ instead of $\lambda$.

The physical discretisation of the weak formulation associated with \eqref{modnum} is done with lowest order
Raviart-Thomas mixed finite elements for the Darcy velocity $\vq$ and piecewise constant elements for the pressure $u$
(see Section~\ref{sec:error}) on a uniform mesh of $n \times n$ squares, each divided into two triangles.
We solve the resulting saddle point problems using the efficient divergence-free reduction technique for
Raviart-Thomas finite elements introduced in \cite{CGSS:2000, Scheichl:2000}.
The associated symmetric positive-definite linear system is solved with the sparse direct solver implemented
in Matlab. The theoretical cost of this is $O(h^{-3})$, but in practice it is often faster.
In addition, we compute the recovered pressure approximation $\widetilde{u}_{h}$ from Section \ref{sec:err:P1} and the
effective permeability
\begin{equation}\label{keff}
k_{\textnormal{eff}}(\omega)=\frac{\int_D q_1(\omega,\vx)\, \dx}{- \int_D \frac{\partial u}{\partial
x_1}(\omega,\vx)\dx},
\end{equation}
which simplifies to $\int_D q_1(\omega,\vx)\dx$ for the flow cell problem (see \cite{Graham_etal:2011}).

For the exponential covariance we generate samples of $\log(a)$ at the vertices of the FE mesh using the circulant embedding technique (see, e.g. \cite[section 5]{Graham_etal:2011}).
For the Mat\'ern covariance we use a truncated Karhunen-Lo\`eve expansion (KLE) where
only a certain number of the leading eigenpairs is retained. 
The eigenpairs are approximated by a spectral collocation method \cite{Atkinson:1997}.

\begin{remark} 
The analysis in Sections \ref{sec:reg} and \ref{sec:error} has been performed for pure 
Dirichlet boundary conditions.
The flow cell test problem features mixed Dirichlet/Neumann boundary conditions. 
This case is handled in \cite[Remark 5.4 (b)]{TSGU:2012} for the primal formulation of 
\eqref{modnum} and this analysis carries over to mixed formulations. 
Importantly, in the flow cell problem, no additional singularities are introduced at 
the points where the Dirichlet and Neumann boundary segments meet.  
To see this, we reflect the problem and the solution across the Neumann boundary and 
apply the regularity theory to the union of the original and reflected domain. 
Crucially, all angles of the new domain are less than $\pi/2$ and thus the full 
regularity of the pressure is maintained.
\end{remark}

\begin{remark}
In hybridised mixed methods it is standard to compute the Lagrange multipliers first
and subsequently recover the Darcy velocity and pressure by local post-processing.
The piecewise linear pressure recovery is then also a cheap, local procedure.
Similarly, in non-hybridised methods, the Lagrange multipliers and thus the piecewise 
linear pressure approximation $\widetilde{u}_{h,\omega}$ can be recovered cheaply by 
substituting the Darcy 
velocity $\vq_{h,\omega} \in \cV_h \subset RT_{-1}(\mathcal{T}_h)$ and the pressure 
$u_{h,\omega} \in \cW_h$ into the first equation in \eqref{P1-system} and solving for 
$\lambda_{h,\omega}$. Note that for the usual choices of basis functions in 
$RT_{-1}(\mathcal{T}_h)$, $\cW_h$ and $M_{-1}(\mathcal{E}_I)$, this recovery is completely 
local. 
\end{remark}

\subsection{Mixed FE error convergence rates}

We investigate the convergence of FE approximations to \eqref{modnum} with respect to the characteristic mesh size
$h_{\ell}=n_\ell^{-1}$, on a sequence of uniform meshes with $n_\ell=n_0*2^{\ell}$, $\ell=0,\dots,L$. 
We use a Monte Carlo method with $N=2000$ samples for the convergence tests. 
The reference solutions $\vq_{\star}$ and $u_{\star}$ are computed on a grid with $h_\star = 1/256$.

First, we consider the exponential covariance function $\rho=\rho_\mathrm{exp}$ in \eqref{exp}.
In this case the trajectories of $\log(a)$ (and thus $a$) belong to $\mathcal{C}^{t}(\bar{D})$ almost surely for
all $t < 1/2$ and Assumption A2 is satisfied for all $0<t<1/2$.
Note that $\Div \,\vq \equiv 0$ and thus $\|\vq\|_{H(\Div,D)} = \|\vq\|_{L^2(D)}$.
Consequently, our theory in Section~\ref{sec:error} tells us to expect
$\Vert\vq_{\star}-\vq_h\Vert_{L^2(\Omega,H(\rdiv,D))} = O(h^{1/2-\delta})$ 
and $\Vert u_{\star}-u_{h}\Vert_{L^2(\Omega,L^2(D))} = O(h)$, for all $\delta>0$.
We expect essentially the same convergence $\Vert u_{\star}-\widetilde{u}_{h}\Vert_{L^2(\Omega,L^2(D))} =
O(h^{1-\delta})$ for the recovered pressure approximation.
For the effective permeability we also expect $|k_{\textnormal{eff};\star}-k_{\textnormal{eff};h}|_{L^2(\Omega)} =
O(h^{1-\delta})$. 
The results in Figure~\ref{fig:test_conv_log_nu05_lam1} confirm our theory. 
We observe linear convergence for the standard and the recovered pressure approximation as well as the effective
permeability. 
The velocity approximation is of order $O(h^{1/2})$.

\begin{figure}[t]
\centering
\includegraphics[width=0.48\textwidth]{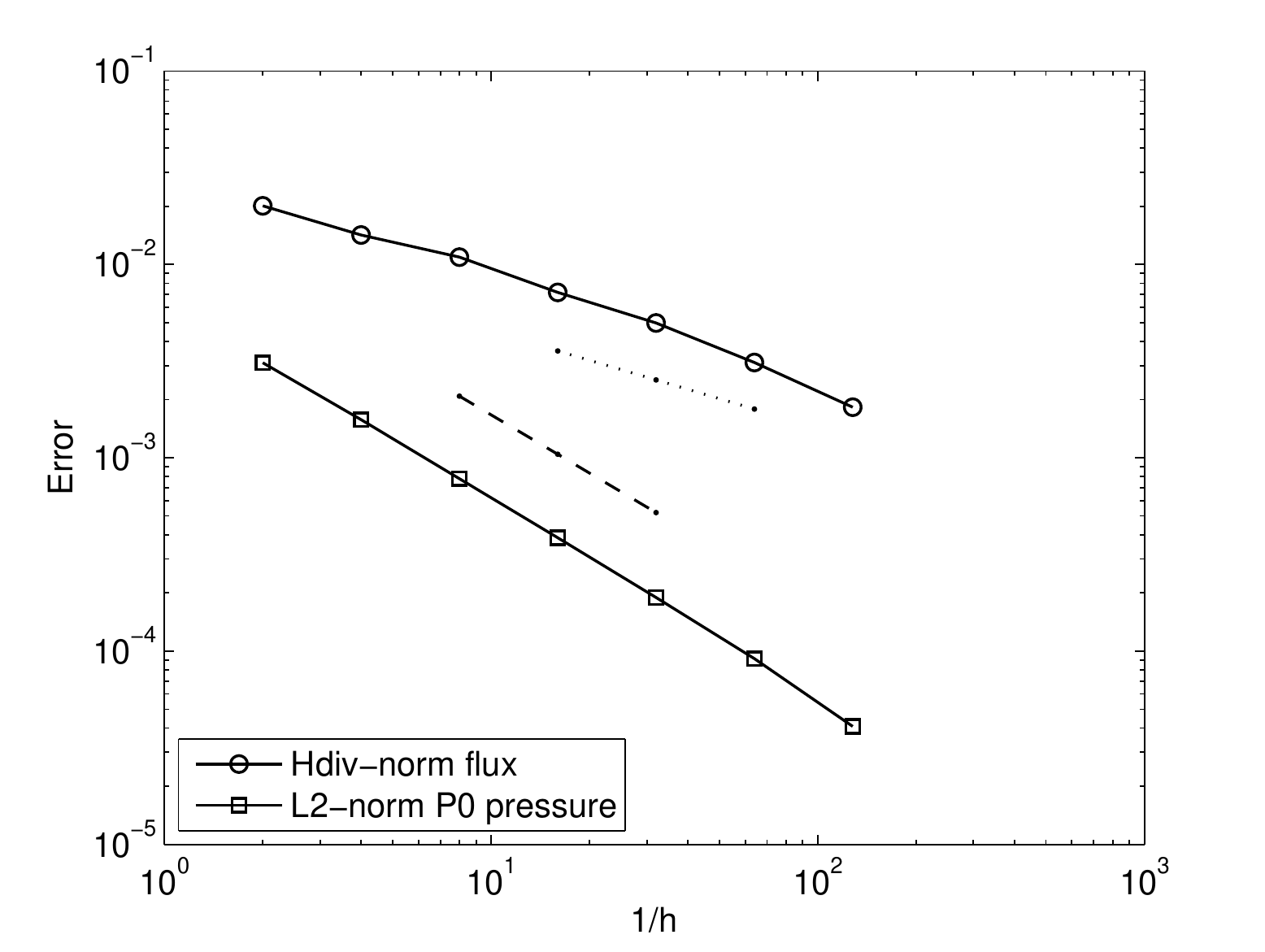}
\includegraphics[width=0.48\textwidth]{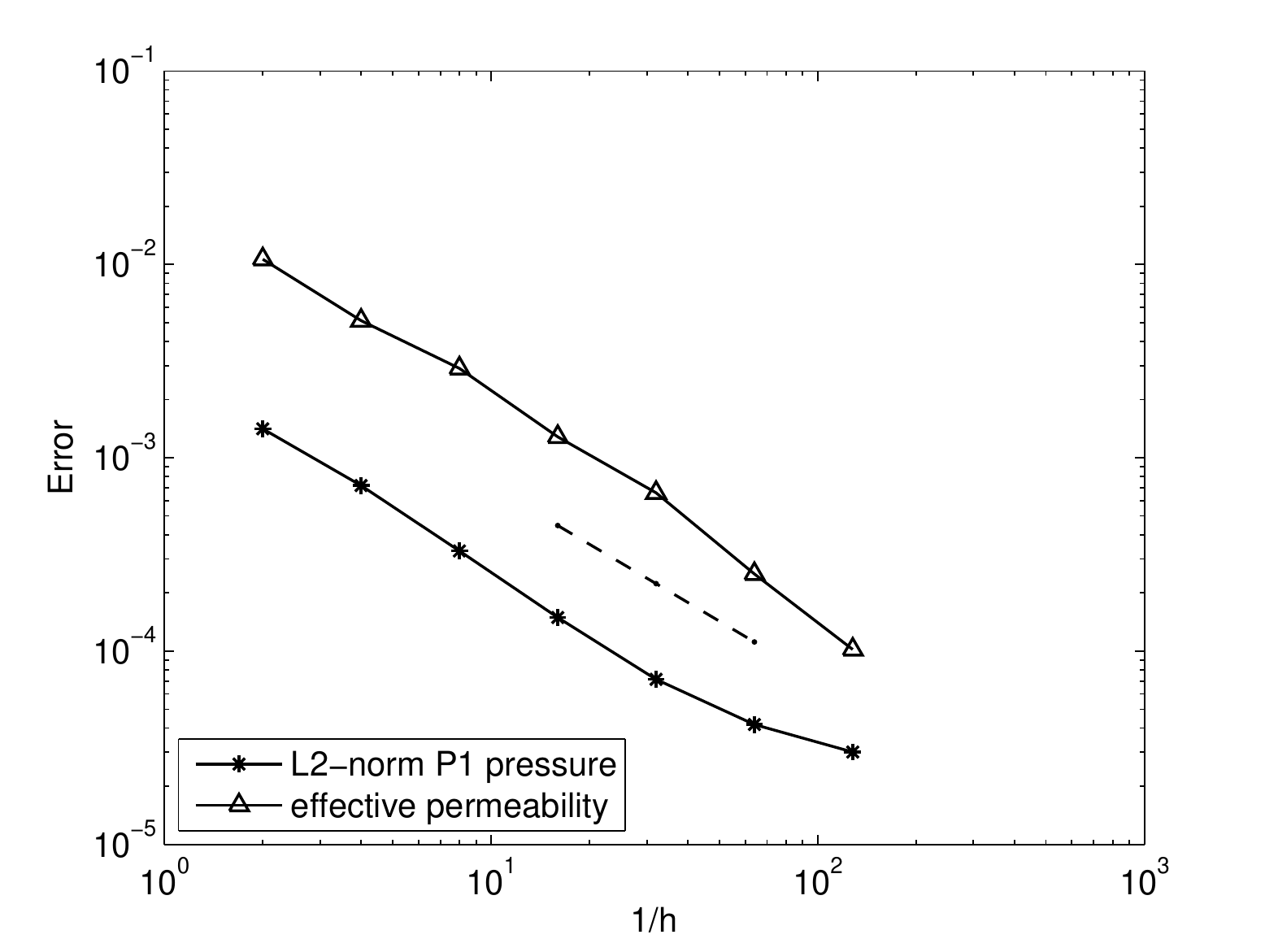}
\caption{Mixed FE approximation errors with respect to the mesh size 
$h_{\ell}$ for $\rho=\rho_\mathrm{exp}$ and $\lambda=1$. The dotted line has slope $-1/2$; the dashed line has slope
$-1$.}
\label{fig:test_conv_log_nu05_lam1}
\end{figure}

Next we consider $\log(a)$ with Mat\'ern covariance \eqref{matern} with $\nu=2$ and $\lambda=0.5$.
In this case it can be shown that the trajectories of $\log(a)$ belong to $\mathcal{C}^{1}(\bar{D})$ almost surely;
hence Assumption A2 is satisfied for $t=1$.
We retain the leading 13 eigenpairs of the KLE which captures more than 95\% of the variability of $\log(a)$.
Due to the theory in Section~\ref{sec:error} we expect linear convergence for both the velocity and standard pressure
approximation and quadratic convergence for the recovered pressure approximation and the
effective permeability.
The results in Figure~\ref{fig:test_conv_log_nu2_lam05} confirm this.
Note that these convergence rates are optimal for lowest-order Raviart Thomas mixed finite elements and can be achieved for all Mat\'ern covariances with parameter $\nu>1$.

\begin{figure}[t]
\centering
\includegraphics[width=0.48\textwidth]{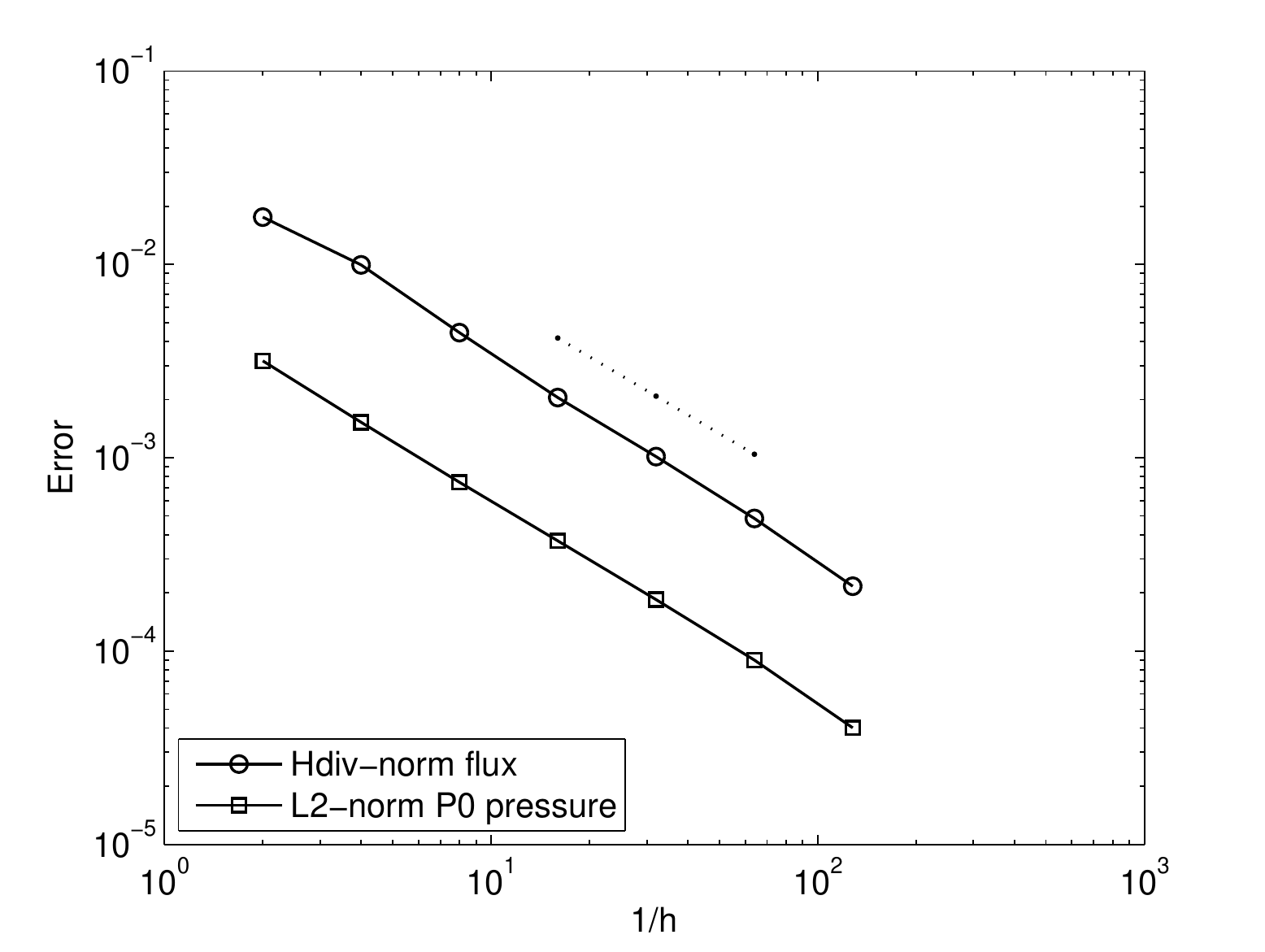}
\includegraphics[width=0.48\textwidth]{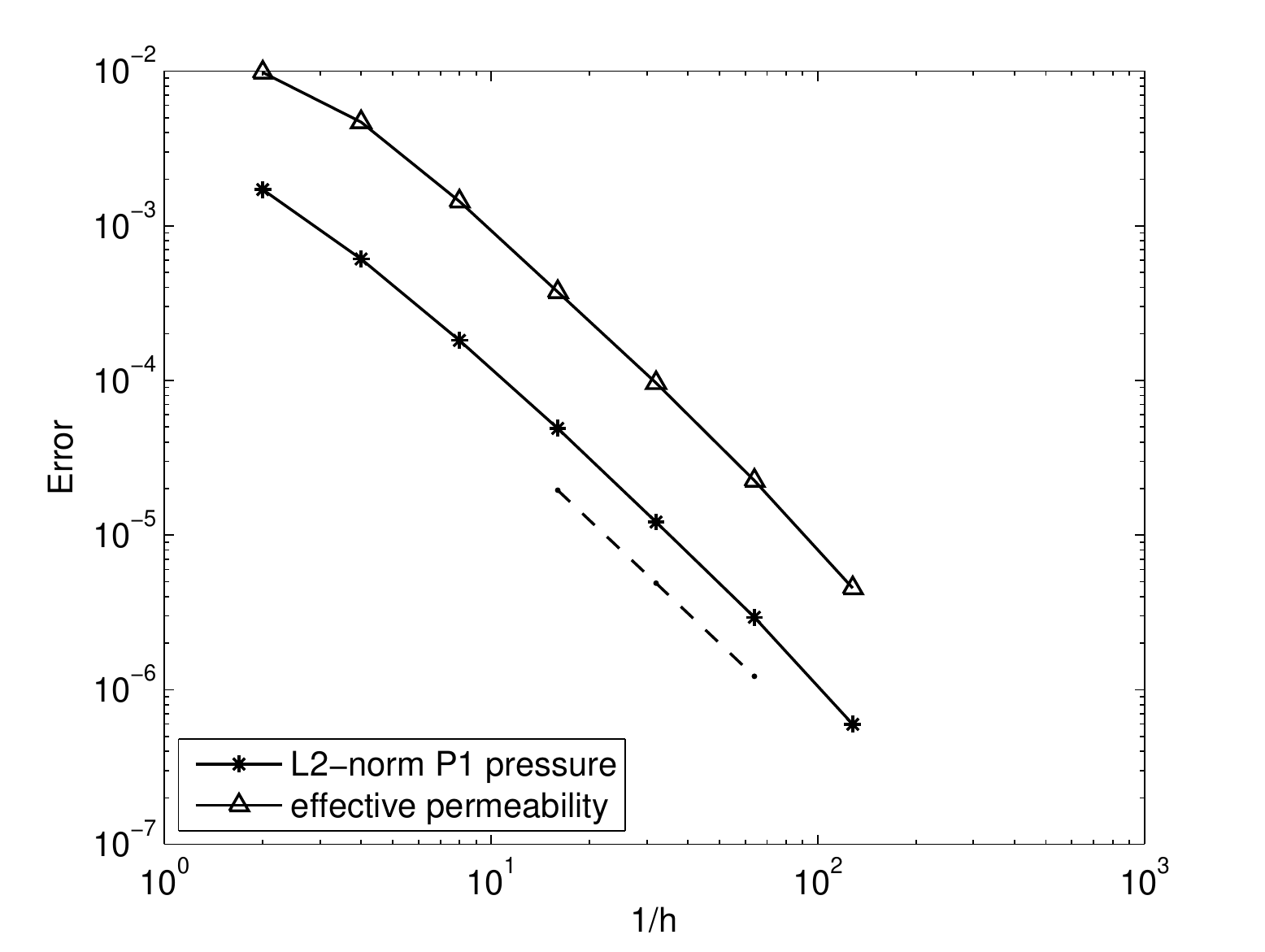}
\caption{Mixed FE approximation errors with respect to the mesh size 
$h_{\ell}$ for $\rho=\rho_2$ and $\lambda=0.5$. The dotted line has slope $-1$; the dashed line has slope $-2$.}
\label{fig:test_conv_log_nu2_lam05}
\end{figure}

\subsection{Multilevel Monte Carlo simulation}

We choose $\rho=\rho_\mathrm{exp}$ and $\lambda=0.1$ in our test problem.
This time, we investigate the theoretical assumptions (M1) and (M2) in the MLMC complexity 
theorem for various quantities of interest derived from solutions to \eqref{modnum} 
using $N=5000$ samples for the tests. 
The reference solution $(\vq_{\star},u_{\star})$ along with 
 $k_{\textnormal{keff},\star}$ are again computed on a grid with $h_\star=1/256$. 
The quantities of interest are the $H(\rdiv)$-norm of the Darcy velocity and the 
effective permeability $k_\text{eff}$ in \eqref{keff}. 

Since the trajectories of $a$ belong to $\mathcal{C}^{t}(\bar{D})$ almost surely for all $t < 1/2$, our theory in
Section~\ref{sec:MLMC} tells us to expect $|\mathbb{E}[\|\vq_h\|_{H(\rdiv)}-\|\vq_{\star}\|_{H(\rdiv)}]= O(h^{1/2})$ and
$\mathbb{V}[\|\vq_h\|_{H(\rdiv)}-\|\vq_{2h}\|_{H(\rdiv)}] = O(h)$. 
The results in Figure \ref{fig:exp_var_Hdiv} show that this result is not sharp. 
We observe twice the expected convergence rate.
This is because $\Vert\vq\Vert_{H(\rdiv)} = (\int_D |\vq|^2)^{1/2}$ here, which is a simple 
nonlinear functional of $\vq$ that can be analysed by the same techniques as presented 
in \cite{TSGU:2012} for standard piecewise linear FEs.
For the effective permeability we expect $|\mathbb{E}[k_{\textnormal{eff};h}-k_{\textnormal{eff};\star}]| = O(h)$ and $\mathbb{V}[k_{\textnormal{eff};h}-k_{\textnormal{eff};2h}] = O(h^2)$. 
This time, the results in Figure \ref{fig:exp_var_keff} confirm this theory.

\begin{figure}[th]
\centering
\includegraphics[width=0.48\textwidth]{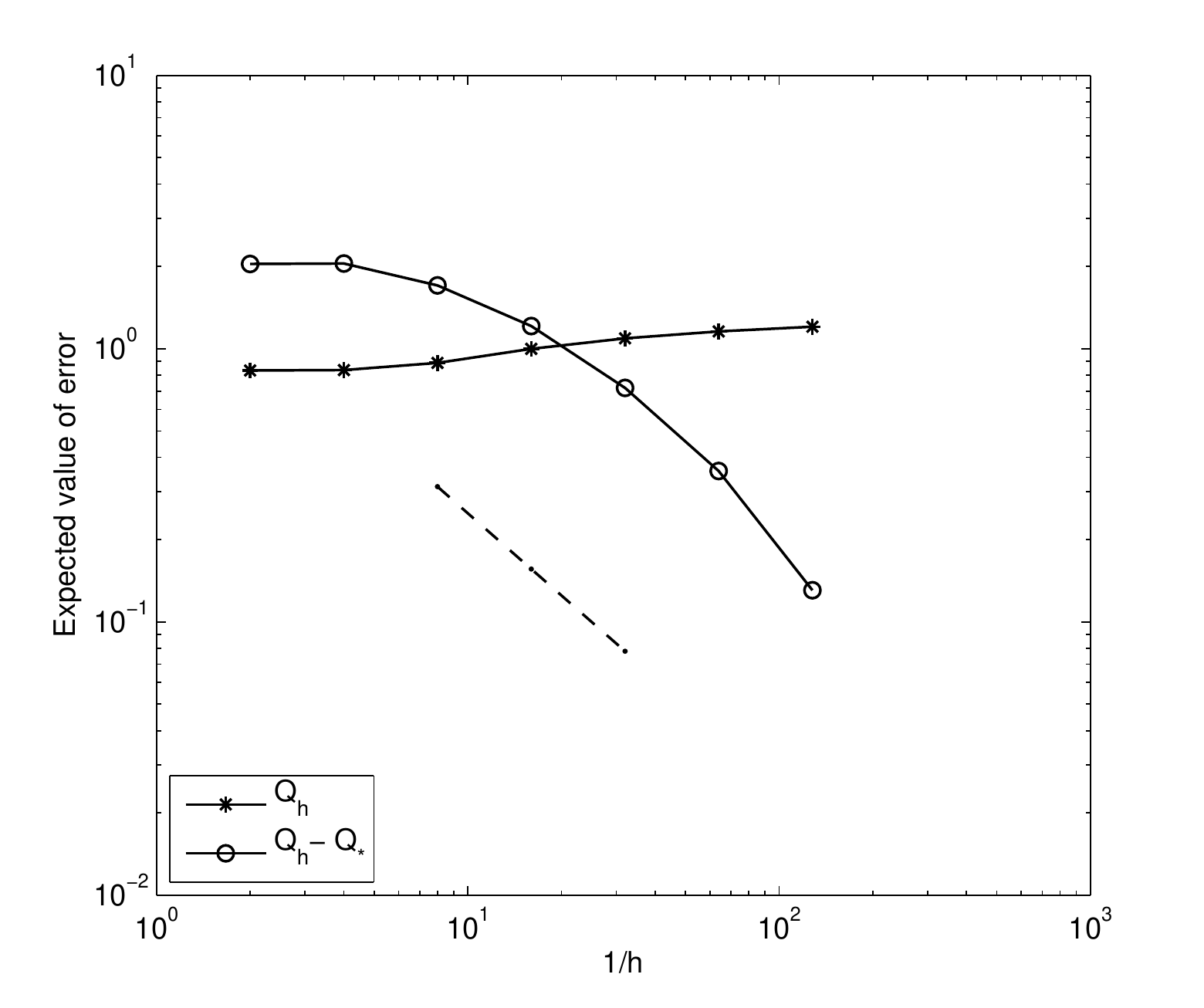}
\includegraphics[width=0.48\textwidth]{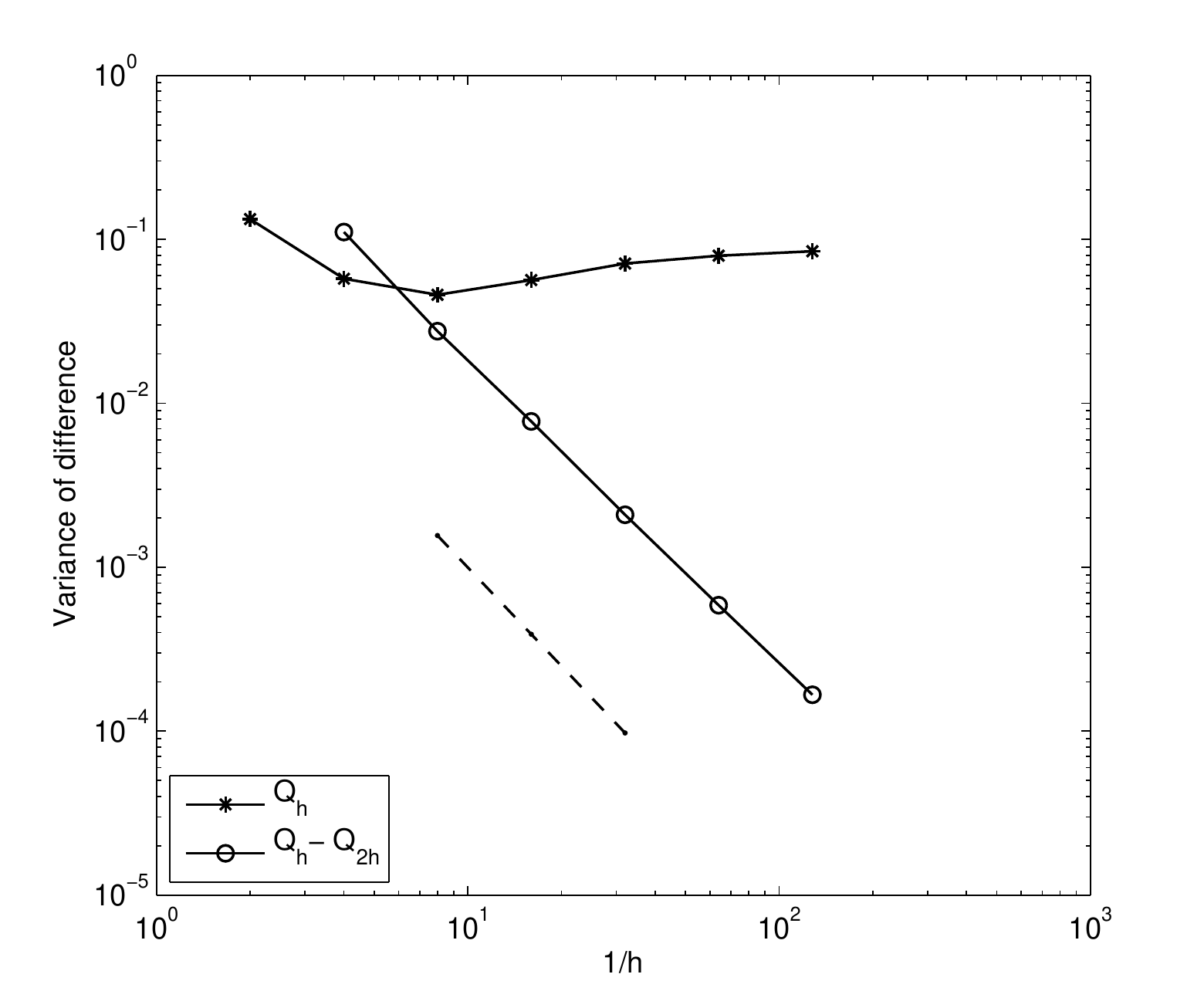}
\caption{Plot of $|\mathbb{E}[\|\vq_h\|_{H(\rdiv)}]|$ and
$|\mathbb{E}[\|\vq_h\|_{H(\rdiv)}-\|\vq_{\star}\|_{H(\rdiv)}]|$ (left), as well as $\mathbb{V}[\|\vq_h\|_{H(\rdiv)}]$
and
$\mathbb{V}[\|\vq_h\|_{H(\rdiv)}-\|\vq_{2h}\|_{H(\rdiv)}]$ (right) with respect to the mesh size $h$ for
$\rho=\rho_\mathrm{exp}$ and $\lambda=0.1$. The
dashed line has slope $-1$ (left) resp. $-2$ (right). }
\label{fig:exp_var_Hdiv}
\vspace{1ex}
\centering
\includegraphics[width=0.48\textwidth]{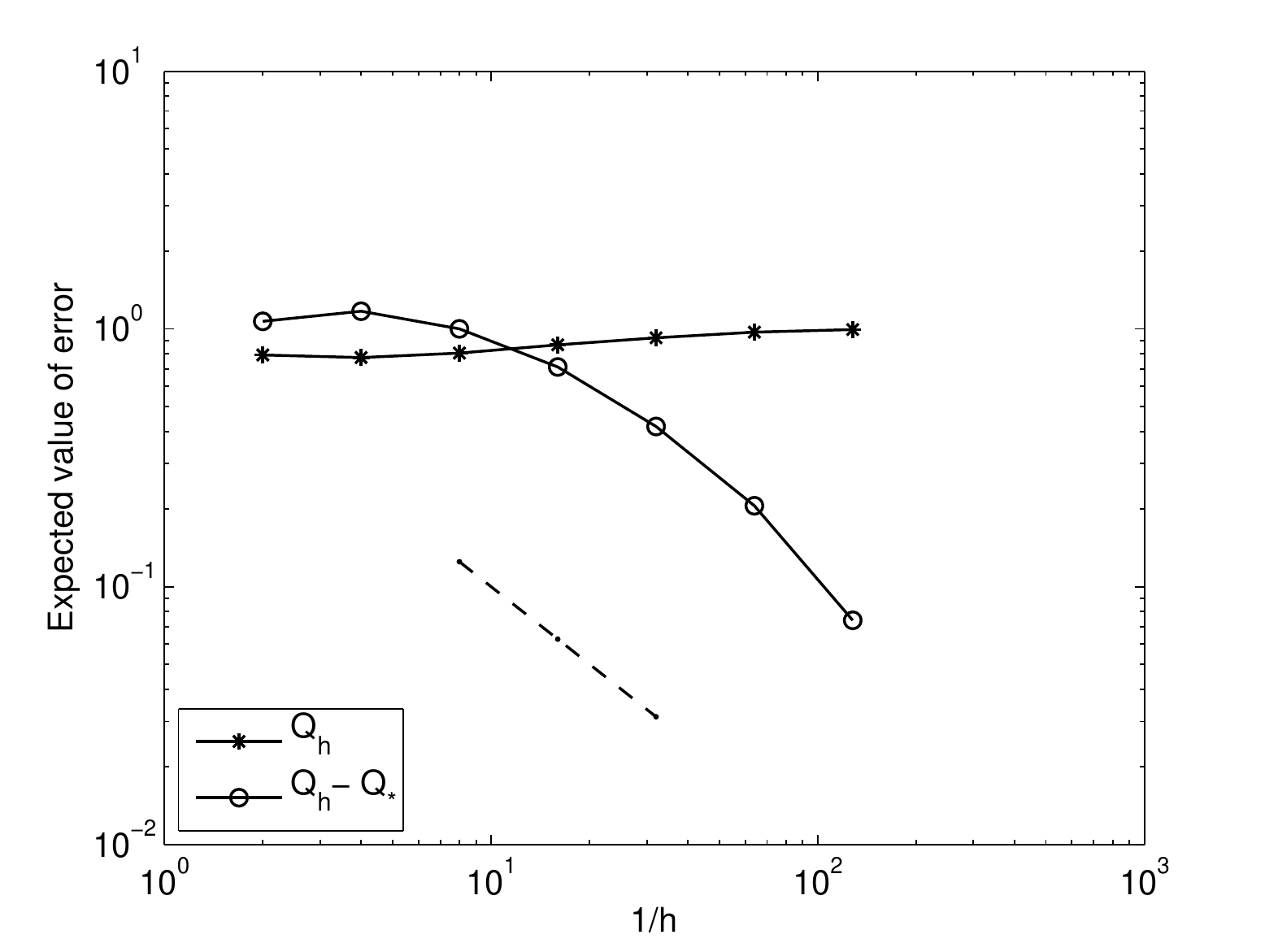}
\includegraphics[width=0.48\textwidth]{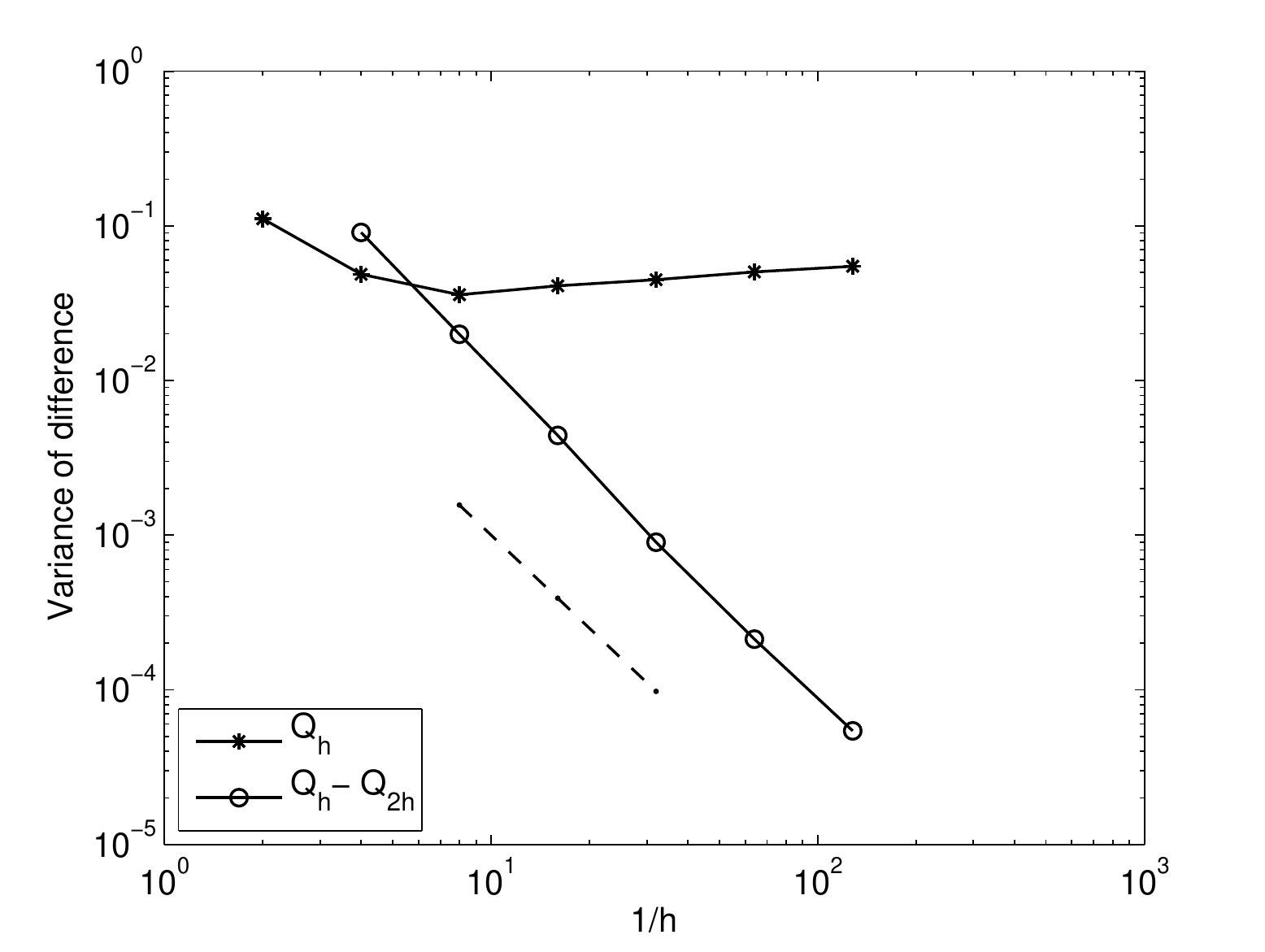}
\caption{Plot of $|\mathbb{E}[k_{\textnormal{eff};h}]|$ and
$|\mathbb{E}[k_{\textnormal{eff};h}-k_{\textnormal{eff};\star}]|$ (left), as well as
$\mathbb{V}[k_{\textnormal{eff};h}]$ and
$\mathbb{V}[k_{\textnormal{eff};h}-k_{\textnormal{eff};2h}]$ (right) with respect to the mesh size $h$
for $\rho=\rho_\mathrm{exp}$ and $\lambda=0.1$.
The dashed line has slope $-1$ (left) resp. $-2$ (right).}
\label{fig:exp_var_keff}
\end{figure}

Finally, in Figure \ref{fig:cpu:MLMC} we compare the computational costs of the standard and multilevel MC estimator, respectively, for the estimation of the expected value of the effective permeability \eqref{keff} where the sample error is fixed at $10^{-4}$ (i.e. $\varepsilon^2/2 = 10^{-4}$ and so
$\varepsilon = \sqrt{2} 10^{-2}$. 
The CPU timings (in seconds) shown in Figure~\ref{fig:cpu:MLMC} are calculated using Matlab 7.8 (with a single
computational thread) on an 8 processor Linux machine with 14.6 GByte of RAM.
As usual, the optimal number of samples $N_\ell$ in the MLMC estimator on each level
 is computed from the formula
\begin{equation}\label{optimalNl}
N_\ell := \left\lceil 2 \varepsilon^{-2}  \sqrt{\widehat{s}^2_\ell h_\ell} \sum_{\ell'=0}^L \sqrt{\widehat{s}^2_{\ell'}/h_{\ell'}} \right\rceil = 10^{-4} \left\lceil 
\sum_{\ell'=0}^L \sqrt{2^{\ell'-\ell} \, \widehat{s}^2_{\ell'}\widehat{s}^2_\ell} \right\rceil\,,
\end{equation}
where $\widehat{s}^2_\ell$ is the sample variance on level $\ell$, i.e. an estimate of $\mathbb{V}[Y_\ell]$ with an initial set of $\widetilde{N}_\ell < N_\ell$ samples (cf.~\cite[Section 5]{Giles:2008}). 

The efficiency of MLMC as compared to standard MC is clearly demonstrated. 
On a mesh with $h=1/256$ the 4-level MLMC estimator takes only 40 seconds, but the single level MC estimator takes 27 minutes.
We observe that in Figure~\ref{fig:exp_var_keff} the graphs of
$\mathbb{V}[k_{\textnormal{eff};h}-k_{\textnormal{eff};2h}]$ and $\mathbb{V}[k_{\textnormal{eff};h}]$ intersect at
$h \approx \lambda = 0.1$. This means that the cost of the MLMC estimator on any coarser mesh will actually be bigger
than the cost of the standard MC estimator on the same mesh. 
The coarsest mesh used in Figure~\ref{fig:cpu:MLMC} contains $32 \times 32$ elements.

This phenomenon was already observed in \cite{CGST:2011} and can be overcome by using smoother approximations of the permeability on coarser finite element meshes as suggested in \cite[Section 4]{TSGU:2012}, where so called
``level-dependent'' MLMC estimators are studied.
In \cite{TSGU:2012} the log-permeability is approximated by a truncated KL expansion where a decreasing number of modes
is included on the coarser meshes. 
A similar strategy has been suggested in the context of the related Brinkman problem in \cite{Gittelson_etal:2011} under the assumption of a certain decay rate of the finite element error with respect to the number of KL modes.
We do not study level-dependent MLMC estimators here since the focus of this work is on the mixed finite element error
estimation. 

\begin{figure}[t]
\centering
\includegraphics[width=0.48\textwidth]{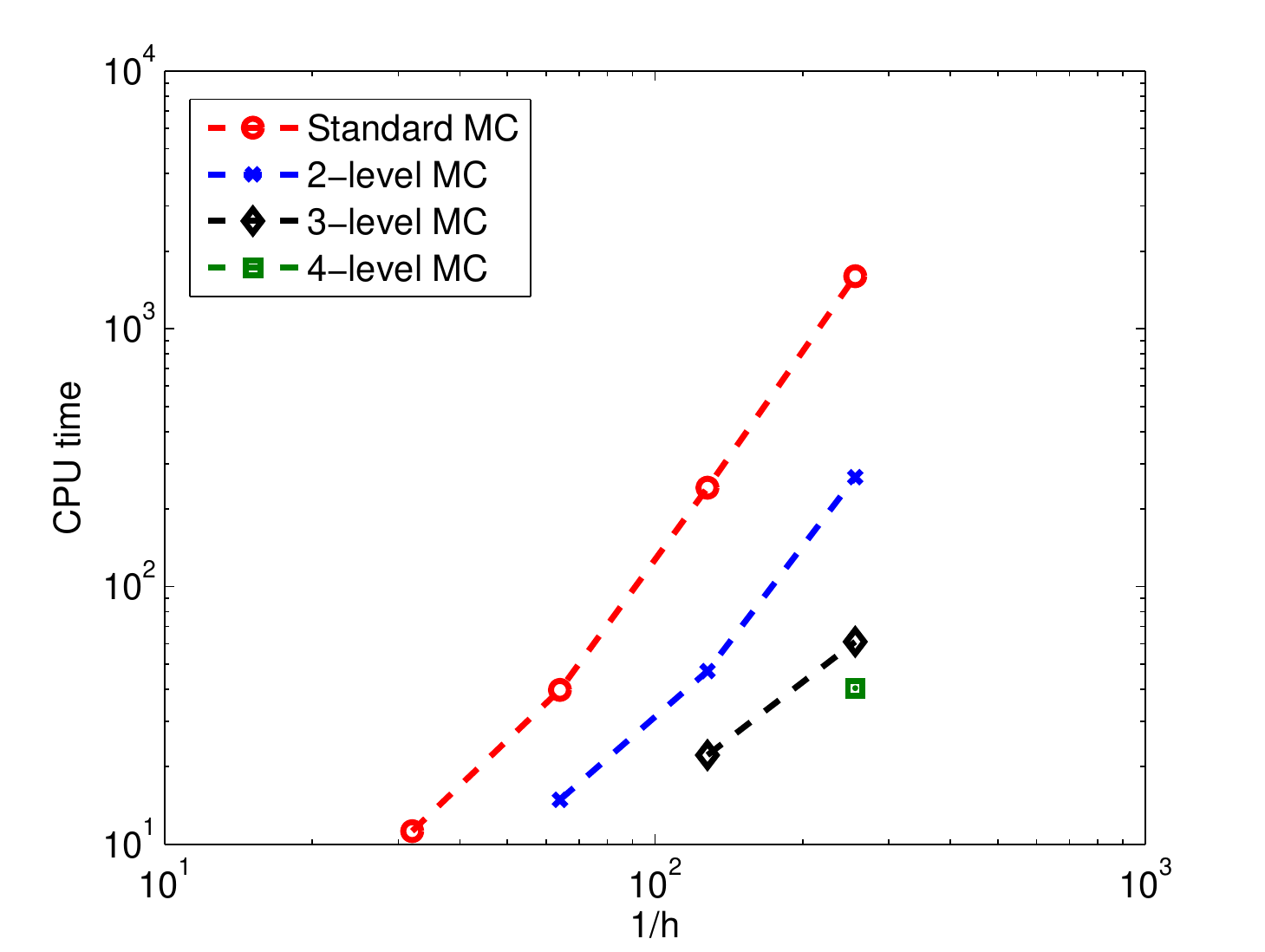}
\caption{Plot of CPU time versus $1/h$ for a fixed tolerance of $10^{-4}$ for the sampling
error. The quantity of interest is $k_{\textnormal{eff};h}$. The correlation function is as in
Figure~\ref{fig:exp_var_keff}.}
\label{fig:cpu:MLMC}
\end{figure}

\subsection{Travel time calculations}

Coming back to our motivating example in the introduction we now estimate statistics of
the time it takes particles to travel from a location in the computational
domain to its boundary.
To do this we use a very simple particle tracking model.
Having computed the Darcy velocity $\vq_\omega$ via \eqref{3.3} and neglecting molecular dispersion, the particle path
satisfies the initial value problem
\begin{equation}\label{tt:cont}
\frac{d\vec{x}_\omega}{dt}=\vq_\omega, \quad \vec{x}_\omega(0) = \vec{x}_0 \ ,
\end{equation}
where $\vec{x}_0 \in D$ denotes the starting point.
The traveltime $\tau_\omega \in [0,\infty)$ is the time when the particle hits the boundary, i.e. when
$\vec{x}_\omega(\tau_\omega) \in \partial D$ for the first time.

Due to the Picard-Lindel\"{o}f Theorem, a sufficient condition for problem \eqref{tt:cont} to
have a unique solution is that $\vq_\omega$ is Lipschitz continuous on $D$ as a function of $\vx$.
Assuming $u_\omega \in C^{1+t}(\bar{D})$ it follows that $\nabla
u_\omega \in C^{t}(\bar{D})^d$.
Combining this with Assumption A2, that is, $a_\omega \in C^{t}(\bar{D})$, for
some $0<t\leq 1$, we see that the Darcy velocity $\vq_\omega=-a_\omega 
\nabla u_\omega \in C^{t}(\bar{D})^d$ is H\"{o}lder continuous on $\bar{D}$ with
coefficient $t$. Thus, the Darcy velocity is Lipschitz only if $t=1$.
The required regularity result for the pressure $u_\omega \in
C^{1+t}(\bar{D})$ can be proved under Assumptions A1-A2 together with slightly stronger regularity conditions on the
source terms and on the boundary data, see \cite[Section 2.6]{aretha_thesis}.

To discretise problem \eqref{tt:cont} we replace $\vq_\omega$ by its FE approximation $\vq_{h,\omega}$ in \eqref{3.4}
and obtain
\begin{equation}\label{tt}
\frac{d\vec{x}_{\omega,h}}{dt}=\vq_{h,\omega}, \quad \vec{x}_{h,\omega}(0) = \vec{x}_0 \ .
\end{equation}
Again, the Picard-Lindel\"{o}f Theorem tells us that problem \eqref{tt} has a unique solution which
can be computed element by element over the triangulation $\mathcal{T}_h$ of $D$.
By following the particle through the domain $D$ and summing up the travel times in each
element we obtain the FE approximation $\tau_{h,\omega}$ to the actual travel time
$\tau_\omega$ for each realisation $a_\omega$ of the random permeability.

We use the 2D flow cell problem with exponential covariance \eqref{exp} and $\lambda=1$.
The particles are released at $\vec{x}_0=[0,0.5]^\top.$
The reference travel time $\tau_\star$ is computed on a grid with $n=256$ elements in each
spatial direction.
The results are depicted in Figure~\ref{fig:exp_var_tt} where we use again $N=5000$ samples
on each level.
Interestingly, we observe linear convergence in both plots suggesting that $\alpha = \beta$ 
in Theorem~\ref{MLMC-conv} in this case, in contrast to all the previous examples where 
$\beta = 2 \alpha$. This is due to a difference in weak convergence (in mean) needed 
in (M1) versus strong convergence (pathwise) needed in (M2) for the travel time. 
We have no rigorous theory to support these results as yet.

\begin{figure}[t]
\centering
\includegraphics[width=0.48\textwidth]{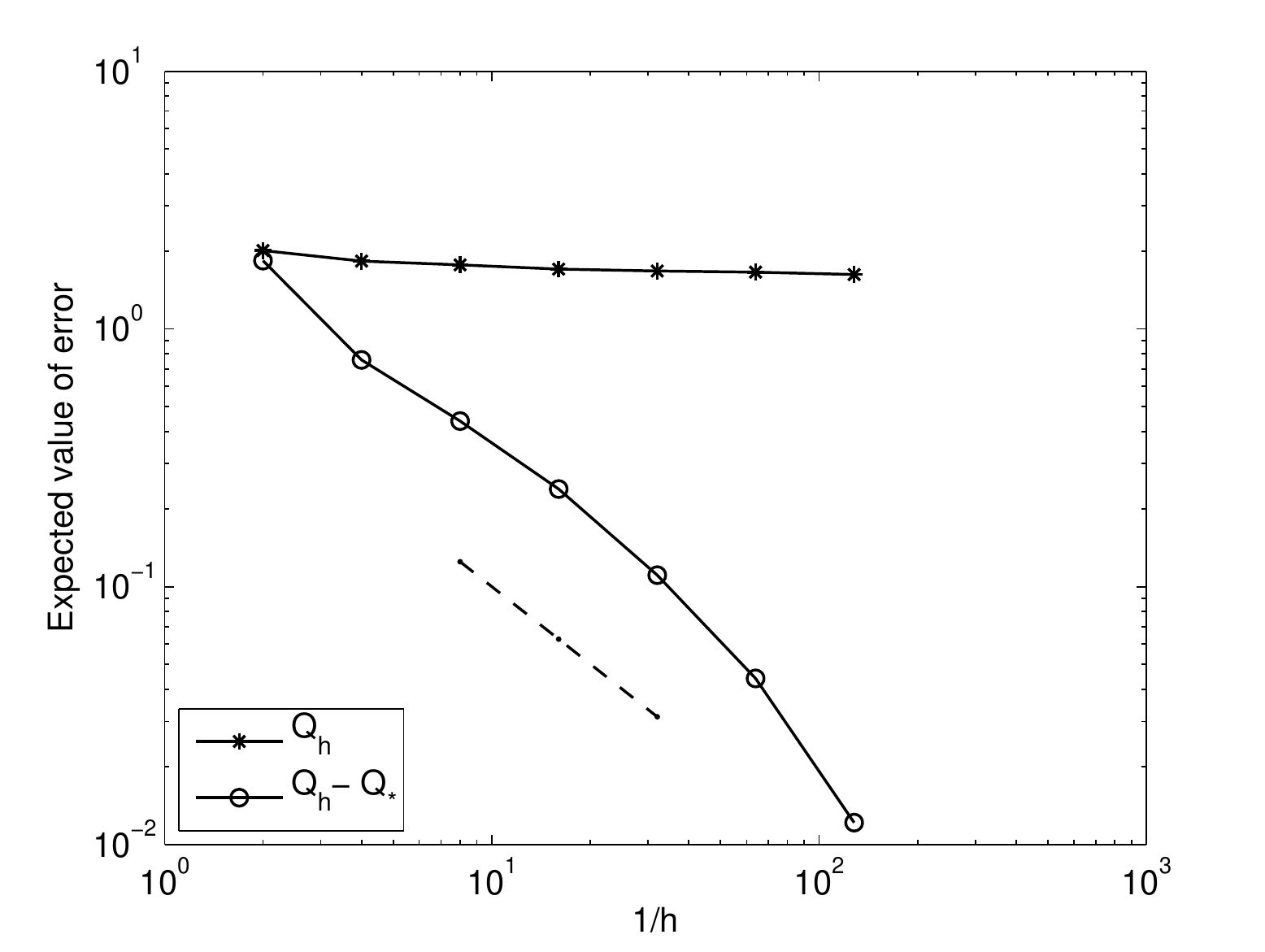}
\includegraphics[width=0.48\textwidth]{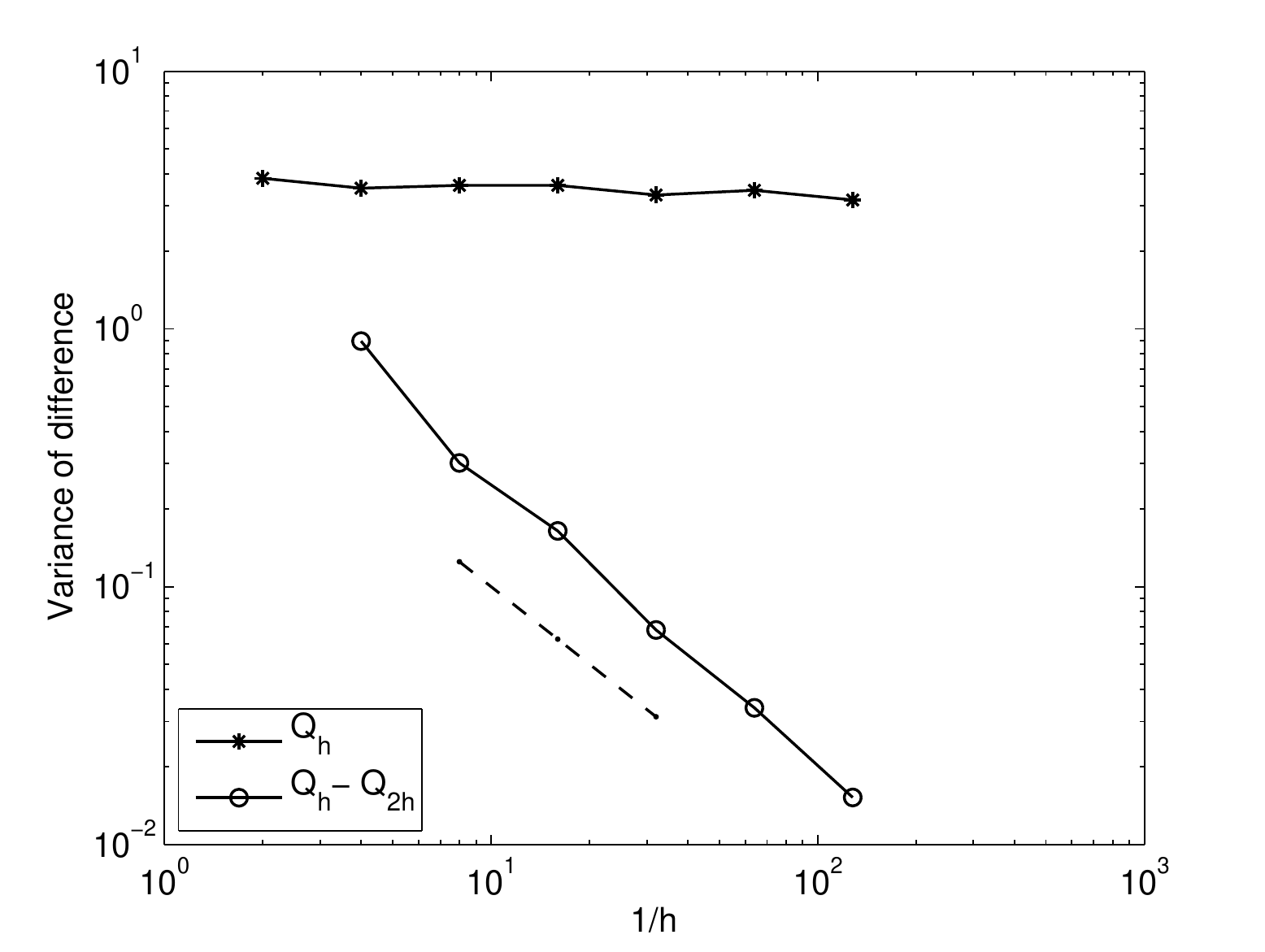}
\caption{Plot of
$|\mathbb{E}[\tau_h]|$ and
$|\mathbb{E}[\tau_h-\tau_{\star}]|$ (left), as well as $\mathbb{V}[\tau_h]$ and
$\mathbb{V}[\tau_h-\tau_{2h}]$ (right) with respect to the mesh size $h$ for $\rho=\rho_{\mathrm{\exp}}$ and
$\lambda=1$. The dashed line has slope $-1$.}
\label{fig:exp_var_tt}
\end{figure}

Finally, we compare the computational costs of the standard and multilevel MC estimator,
respectively, for the estimation of the expected value of the travel time $\tau$
where the sample error is fixed at $2.5 \times 10^{-3}$ (i.e. $\varepsilon = 5\sqrt{2} 
\times 10^{-2}$). The CPU timings (in seconds) shown in Figure~\ref{fig:cpu:MLMC:tt} 
are calculated using Matlab 7.13 on a quadcore Linux machine with a 3.30 GHz processor 
and 4 GByte of RAM with the optimal numbers of samples $N_\ell$ in \eqref{optimalNl}.
The efficiency of MLMC as compared to standard MC is clearly demonstrated.
On a mesh with $h=1/256$ the 3-level MLMC estimator takes only 5 minutes, but the single level MC
estimator takes 46 minutes.

\begin{figure}[t]
\centering
\includegraphics[width=0.48\textwidth]{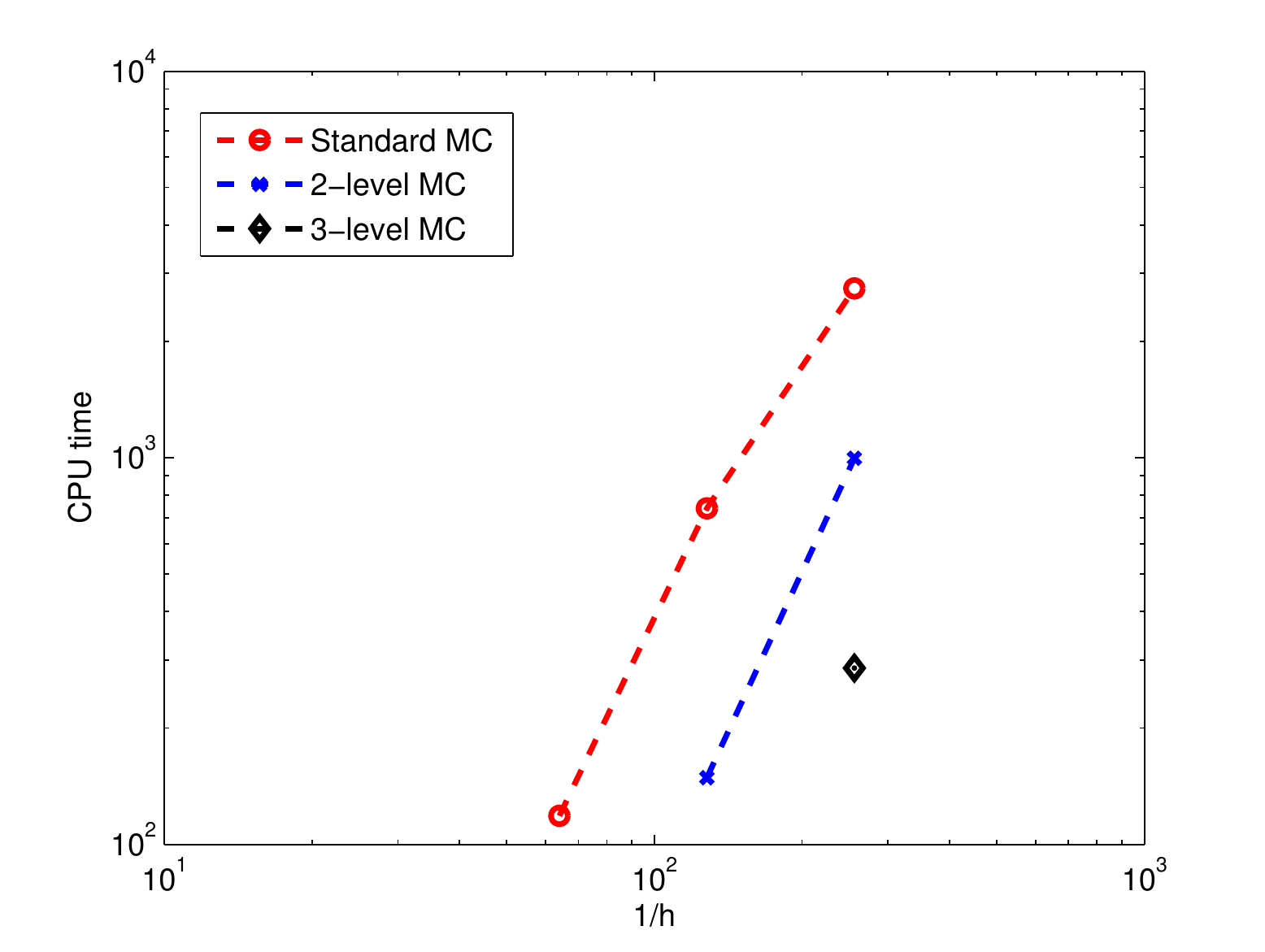}
\caption{Plot of CPU time versus $1/h$ for a
fixed tolerance $2.5\times 10^{-3}$ of the sampling error. The quantity of interest is
$\mathbb{E}[\tau_{h}]$. This plot is for $\rho=\rho_{\mathrm{\exp}}$ and $\lambda=1$.}
\label{fig:cpu:MLMC:tt}
\end{figure}

\section{Conclusions}
We studied a single phase flow problem in a random porous medium described by correlated lognormal distributions.
Realisations of the permeability are not uniformly bounded away from zero and infinity and they are in
general only H\"{o}lder-continuous with exponent $0<t\leq 1$.
We presented a mixed formulation of this problem and established the regularity of the Darcy velocity and the pressure. 
Using lowest order Raviart-Thomas mixed elements we proved finite element error bounds for the Darcy velocity
and the pressure, as well as a recovered pressure approximation. 
We showed that piecewise linear pressure recovery is only of practical interest for conductivity models with $t>1/2$.
Moreover, we proved error bounds for a class of linear functionals of the velocity.
This enabled us to bound the finite element approximation error for the effective permeability of a 2D flow cell.

Using the finite element error bounds we also proved convergence of a Multilevel Monte Carlo algorithm that is used to estimate the statistics of output quantities of interest associated with the flow problem, for example, the $H(\rdiv)$-norm of the velocity or the effective permeability.
In addition, we estimated the expected value of particle travel times to the boundary of the computational domain. As for all other output quantities in this work we observe that the MLMC estimator outperforms standard Monte Carlo. However, the gains are less substantial and there is still potential for improvement using ideas developed in the context of stochastic ordinary differential equations, e.g.~in \cite{Giles:2008b}, which merits further investigation.

\section*{Acknowledgements} 
We thank Daniele Boffi and Rodolfo Rodriguez for helpful comments. 
This research was supported by the Engineering and Physical Sciences Research Council, UK, under grant EP/H051503/1.

\small
\bibliographystyle{plain}
\bibliography{mixedFEbib}

\end{document}